\long\def\delete#1{}
\definecolor{Blue}{rgb}{0,0,1}
\definecolor{Red}{rgb}{1,0,0}
\definecolor{DarkGreen}{rgb}{0,0.6,0}
\definecolor{DarkYellow}{rgb}{1,1,0.2}
\definecolor{DarkPurple}{rgb}{.6,0,1}
\def\ma{\mathcal{A}}
\def\mb{\mathcal{B}}
\def\mf{\mathcal{F}}
\def\mg{\mathcal{G}}
\def\ml{\mathcal{L}}
\def\mn{\mathcal{N}}
\def\mr{\mathcal{R}}
\def\ms{\mathcal{S}}
\def\mt{\mathcal{T}}
\def\mw{\mathcal{W}}
\def\mx{\mathcal{X}}
\def\my{\mathcal{Y}}
\def\bs{\setminus}
\def\c{\choose}
\def\uuu{U^{[ck]}_{c,k-1}}
\def\uu{U^{[ck]}_{c,k}}
\def\ut{U^{[ck]}_{c,t}}
\def\tt{\theta(c,k,t+1)}
\def\ttt{\theta(c,k,t+2)}
\def\zt{\theta(c,k,z)}
\def\ge{\geqslant}
\def\le{\leqslant}
\def\ro{\romannumeral}
\numberwithin{equation}{section}
\newtheorem{thm}{Theorem}[section]
\newtheorem{lem}[thm]{Lemma}
\newtheorem{cl}{Claim}
\newtheorem{con}{Construction}
\begin{document}
	\setcounter{page}{1}
	\renewcommand{\thefootnote}{}
	\newcommand{\remark}{\vspace{2ex}\noindent{\bf Remark.\quad}}
	\renewcommand{\abovewithdelims}[2]{%
		\genfrac{[}{]}{0pt}{}{#1}{#2}}

	%-------------------  First Head  -----------------------------------------
	
	\def\qed{\hfill$\Box$\vspace{11pt}}
	
	\title {\bf  Cross-intersection theorems for uniform partitions of finite sets}

	\author[a]{Tian Yao\thanks{E-mail: \texttt{tyao@hist.edu.cn}}}
	\author[b]{Mengyu Cao\thanks{Corresponding author. E-mail: \texttt{myucao@ruc.edu.cn}}}
	\author[c]{Haixiang Zhang\thanks{E-mail: \texttt{zhang-hx22@mails.tsinghua.edu.cn}}}
	\affil[a]{School of Mathematical Sciences, Henan Institute of Science and Technology, Xinxiang 453003, China}
	\affil[b]{Institute for Mathematical Sciences, Renmin University of China, Beijing 100086, China}
	\affil[c]{Department of Mathematical Sciences, Tsinghua University, Beijing 100084, China}

	\date{}
	
	\openup 0.5\jot
	\maketitle

	\begin{abstract}
A set partition is $c$-uniform if every block has size $c$.
Two families of $c$-uniform partitions of a finite set are said to be cross $t$-intersecting if two partitions from different families share at least $t$ blocks. 
In this paper, we   establish some product-type extremal results for such cross $t$-intersecting families. Our results yield  an  Erd\H{o}s-Ko-Rado theorem and a  Hilton-Milner theorem for uniform set partitions. 
Additionally, cross $t$-intersecting families with the maximum sum of their sizes are also characterized.
		
		\vspace{2mm}
		
		\noindent{\bf Key words:}\ \ Erd\H{o}s-Ko-Rado theorem, Hilton-Milner theorem, cross $t$-intersecting family, set partition.
		\medskip
		
		\noindent{\bf MSC classification:} \   05D05

	\end{abstract}

\section{Introduction}

Let $n$, $k$ and $t$ be positive integers with $n\ge k\ge t$. For an $n$-set $X$, denote the set of all $k$-subsets of $X$ by ${X\choose k}$. A family $\mf\subseteq{X\choose k}$ is said to be \emph{$t$-intersecting} if $|F\cap G|\ge t$ for any $F,G\in\mf$. A $t$-intersecting family is called \emph{trivial} if all its members contain a common specified $t$-subset of $X$, and \emph{non-trivial} otherwise.
The celebrated Erd\H{o}s-Ko-Rado theorem \cite{EKR1}  states that each extremal $t$-intersecting subfamily of ${X\choose k}$ is trivial for $n>n_0(k,t)$. It is known that the smallest possible such function $n_0(k, t)$ is $(t+1)(k-t+1)$ \cite{CEKR,EKRF,Frankl--Furedi-1991,EKR2}. A type of stability result of this theorem is to determine the structure of extremal non-trivial $t$-intersecting subfamilies of ${X\choose k}$. The first result is the Hilton-Milner Theorem \cite{HM1} which describes the structure of such families for $t=1$.  Frankl \cite{HM2} determined such families for $t\ge 2$ and $n>n_1(k,t)$.  Ahlswede and Khachatrian \cite{Ahlswede-Khachatrian-1996}   provided the smallest possible $n_1(k,t)$ and gave the complete result on non-trivial intersection problems for finite sets. Recently, other large maximal non-trivial $t$-intersecting families have also been studied, see \cite{Cao-set,Han-Kohayakawa,Kostochka-Mubayi}.

A natural generalization of the $t$-intersecting family is the concept of cross $t$-intersecting families. 
Subfamilies $\mf$ and $\mg$ of ${X\choose k}$, with $|F\cap G|\ge t$ for any $F\in\mf$ and $G\in\mg$, are said to be \emph{cross $t$-intersecting}. Observe that if $\mf=\mg$, then $\mf$ is $t$-intersecting. 
The study of cross $t$-intersecting families has attracted considerable attention in extremal combinatorics, with two primary lines: one seeking to maximize the sum $|\mf|+|\mg|$, and the other aiming to maximize the product $|\mf||\mg|$. 
%They are said to be \emph{trivial} if there exists a $t$-subset of $[n]$ such that it is contained in each element of $\mf$ and $\mg$, and \emph{non-trivial} if $|\bigcap_{F\in \mf\cup\mg} F|<t$. 
%There is extensive literature on the maximum sum or product of the sizes of cross $t$-intersecting families. 
For the sum version, we refer to~\cite{F-T-1992,H-P-2025,L-Z-2025,WZ,Z-F-2024}; for the product version, see~\cite{B-2016,C-L-L-W-2024,F-W-2023,F-W-2024,H-L-W-Z-2026,T-2013,Z-W-2025}.

Intersection problems have been generalized to many mathematical objects. 
The problems on uniform partitions of finite sets are higher order extremal problems \cite{ES}.
A \emph{$c$-uniform partition} of $[ck]:=\{1,2,\dots,ck\}$ is a partition of it into $k$ blocks with equal sizes. 
Actually, a $c$-uniform partition of $[ck]$ can be viewed as a perfect matching of a complete $c$-uniform hypergraph on $ck$ vertices. The corresponding intersection problems are related to graph problems investigated by Simonovits and S\'{o}s (e.g. \cite{S-S-1980}).

We say a family of some $c$-uniform partitions of $[ck]$ is \emph{$t$-intersecting} if any two members of this family share at least $t$ blocks. 
Meagher  and Moura \cite{MM2005} proved an Erd\H{o}s-Ko-Rado theorem for $t$-intersecting families of $c$-uniform partitions of $[ck]$ for sufficiently large $c$ or $k$. 
They completely solved the case $t=1$, and conjectured a complete Erd\H{o}s-Ko-Rado theorem based on the Ahlswede-Khachatrian theorem \cite{CEKR}. 
Some algebraic proofs for the case $c=2$ and $t=1$ were given in \cite{GM,Lindzey}, and 
a more precise result for $c=t=2$ was obtained by Fallat et al. \cite{Fallat}. 
We remark here that a $t$-intersecting family of permutations can be seen as a special family of $2$-uniform partitions, and Ellis et al. \cite{JAMS} showed the corresponding Erd\H{o}s-Ko-Rado theorem.  For the results on the non-uniform case, we refer readers to \cite{ES,KR,KWEJC,KWJCTA}.

Two families of $c$-uniform partitions of $[ck]$ are said to be \emph{cross $t$-intersecting} if two partitions from different families have at least $t$ blocks in common. 
In this paper, we first show two product-type extremal results for cross $t$-intersecting families. 
One of our results is an Erd\H{o}s-Ko-Rado type theorem. 
For convenience, let $U^{[ck]}_{c,\ell}$ denote the set of all families consisting of $\ell$ pairwise disjoint $c$-subsets of $[ck]$.

 \begin{thm}\label{EKR-perfect}
 	Let $c$, $k$ and $t$ be positive integers with $c\ge3$ and $k\ge t+2$. Suppose $\mf$ and $\mg$ are cross $t$-intersecting subfamilies of $\uu$ such that $|\mf||\mg|$ is maximum.
 	If $c\ge3+2\log_2t$ or $k\ge2t+2$, then $\mf=\mg=\{F\in\uu:T\subseteq F\}$ for some  $T\in U^{[ck]}_{c,t}$.
 \end{thm}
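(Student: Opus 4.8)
The plan is to prove Theorem~\ref{EKR-perfect} by first establishing a product-type supersaturation/stability result for cross $t$-intersecting families in $\uu$, and then deriving the uniqueness of extremal configurations. The overall strategy mirrors the classical approach to EKR-type product theorems: compare an arbitrary cross $t$-intersecting pair $(\mf,\mg)$ against the ``dictatorship'' pair $\mf_0=\mg_0=\{F\in\uu:T\subseteq F\}$ for a fixed $T\in U^{[ck]}_{c,t}$, show that $|\mf_0||\mg_0|$ is a strict maximum unless $(\mf,\mg)$ is itself of this form.

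\medskip
\noindent\textbf{Step 1: Reduce to a ``t-covering'' / kernel analysis.}
First I would observe that we may assume both $\mf$ and $\mg$ are nonempty (otherwise the product is $0$), and that WLOG $|\mf|\le|\mg|$. Fix any $F_0\in\mf$; then every $G\in\mg$ meets $F_0$ in at least $t$ blocks, so $\mg$ is contained in the union, over all $t$-subsets $S$ of the $k$ blocks of $F_0$, of the ``stars'' $\{G\in\uu: S\subseteq G\}$. This already gives the crude bound $|\mg|\le\binom{k}{t}\theta$, where $\theta:=|\{F\in\uu:T\subseteq F\}|$ is the size of a single $t$-star (it counts $c$-uniform partitions of a $(k-t)c$-set into $k-t$ blocks). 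The real work is to upgrade this to: either $|\mf|$ is very small (so the product is tiny compared to $\theta^2$), or $\mf$ is ``almost'' a $t$-star, in which case a bootstrapping argument pins both families down.

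\medskip
\noindent\textbf{Step 2: The kernel method / shifting.}
The key structural tool I would use is an analogue of the sunflower/kernel argument. Define, for a family $\mf\subseteq\uu$, a notion of a \emph{$t$-kernel}: a family $\mk\in U^{[ck]}_{c,t}$ of $t$ pairwise-disjoint $c$-sets such that $\mk\subseteq F$ for some $F\in\mf$, together with the property that removing any block destroys this. If $\mf$ is cross $t$-intersecting with $\mg$ and $\mf$ is not a sub-star of a $t$-star, then for every $t$-set $S$ of blocks there is some $F\in\mf$ with $|F\cap S_{\mathrm{blocks}}|<t$; combining finitely many such $F$'s forces every $G\in\mg$ to contain one of boundedly many ``partial matchings'', and a careful count shows $|\mg|=O\!\bigl(\theta\cdot (\text{something} / c)\bigr)$ or $O(\theta\cdot k^{t}/\,\cdot)$ with an extra saving factor. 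The hypothesis $c\ge3+2\log_2 t$ (resp.\ $k\ge 2t+2$) is exactly what is needed so that this saving factor beats $\binom{k}{t}$ — i.e.\ the loss from being non-trivial outweighs any gain. I expect \textbf{this is the main obstacle}: getting the quantitative estimate sharp enough that the two alternative hypotheses on $c$ and on $k$ both suffice, since the number of $c$-uniform partitions $\theta(c,k,\cdot)$ grows super-exponentially and one must control ratios like $\theta(c,k,t+1)/\theta(c,k,t)$ precisely. The macros $\uuu,\uu,\ut,\tt,\ttt,\tttt,\zt$ defined in the preamble suggest the authors indeed track these $\theta$-ratios carefully, and I would set up a clean recursion $\theta(c,k,t) = \binom{ck-ct}{c}\cdot\frac{1}{?}\cdot\theta(c,k,t+1)$-type identity to drive the induction on $k-t$ (valid since $k\ge t+2$).

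\medskip
\noindent\textbf{Step 3: From ``almost-trivial'' to ``exactly trivial'', and uniqueness.}
Once Step 2 shows that in the extremal (product-maximum) case $\mf$ must be contained in a single $t$-star $\{F:S\subseteq F\}$, symmetry (re-running the argument with the roles of $\mf,\mg$ swapped, using a fixed $G_0\in\mg$) shows $\mg$ is contained in a $t$-star $\{F:S'\subseteq F\}$. Cross $t$-intersection then forces $S=S'$ (if $S\ne S'$ as families of $c$-sets, pick $F\in\mf$ and $G\in\mg$ with $|F\cap G|<t$, contradicting maximality once we note the full stars $\mf_0=\mg_0$ with $T=S$ already achieve $\theta^2$ and strictly beat any proper sub-pair). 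Finally, maximality of $|\mf||\mg|=\theta^2$ forces $\mf=\mg=\{F\in\uu:S\subseteq F\}$, which is the claimed conclusion with $T=S\in U^{[ck]}_{c,t}$. The base cases $k=t+2$ and small $t$ would be handled separately by direct computation; the condition $c\ge 3$ is used to ensure $\binom{2c}{c}\ge 6$ so that merging two blocks genuinely loses a factor, which is false for $c=2$ (where the problem behaves differently and is excluded).
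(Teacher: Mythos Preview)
Your overall strategy is sound and in fact tracks the paper's approach fairly closely: reduce to showing that both families are contained in a common $t$-star, then use maximality. But Step~2 as written is not a proof; it is a description of what a proof should accomplish, and you flag this yourself (``I expect this is the main obstacle''). The paper fills exactly this gap with a clean mechanism that your sunflower/kernel language does not capture.

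Concretely, the paper defines the $t$-covering number $\tau_t(\mf)$ (the minimum size of an $S\in U^{[ck]}_{c,s}$ meeting every $F\in\mf$ in at least $t$ blocks) and proves two things. First (Lemmas~\ref{fangsuo} and~\ref{fangsuo1}), an iterative refinement argument gives the sharp bound
\[
|\mf|\le \theta(c,k,\tau_t(\mg))\binom{\tau_t(\mf)}{t}\prod_{j=1}^{\tau_t(\mg)-t}\bigl(k-(t+j-1)\bigr)=:g\bigl(c,k,t,\tau_t(\mg)\bigr)\cdot\frac{\binom{\tau_t(\mf)}{t}}{\binom{\tau_t(\mg)}{t}},
\]
so that $|\mf||\mg|\le g(c,k,t,\tau_t(\mf))\,g(c,k,t,\tau_t(\mg))$. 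Second (Lemma~\ref{key-bidaxiao}), the function $s\mapsto g(c,k,t,s)$ is strictly decreasing on $\{t,\dots,k-2\}$ and $g(c,k,t,k)<g(c,k,t,k-2)$, precisely under the hypothesis $c\ge 3+2\log_2 t$ or $k\ge 2t+2$. Together these force $\tau_t(\mf)=\tau_t(\mg)=t$ in the extremal case. Your Step~2 gestures at controlling $\theta$-ratios but never isolates $\tau_t$ as the governing parameter, and the vague ``$O(\theta\cdot(\text{something}/c))$'' does not yield the needed strict inequality; the monotonicity of $g$ is a genuine calculation that uses the alternative hypotheses in an essential and separate way.

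Your Step~3 also has a small gap. Once $\tau_t(\mf)=\tau_t(\mg)=t$, you want $S=S'$; you argue by ``pick $F\in\mf$ and $G\in\mg$ with $|F\cap G|<t$'', but such $F,G$ do not come for free --- one must construct them. The paper does this in Lemma~\ref{cover-intersection}: given $T_1\in\mt_t(\mf)$, $T_2\in\mt_t(\mg)$, it explicitly builds an $F\in\mf$ (using maximality and $k\ge \tau_t(\mg)+2$) with $F\cap T_1=T_1\cap T_2$, forcing $|T_1\cap T_2|\ge t$ and hence $T_1=T_2$. Your sketch conflates ``contradicting cross $t$-intersection'' with ``contradicting maximality'' and skips this construction.

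Finally, there is no shifting and no sunflower lemma anywhere in the actual argument; invoking them here is a red herring.
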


Theorem \ref{EKR-perfect} can be viewed as a product version of the Erd\H{o}s-Ko-Rado theorem, which generalizes  
the work of Meagher and Moura \cite{MM2005}. 
%They require that $c$ or $k$ is sufficiently large. 
Before stating our second result, we introduce three cross $t$-intersecting families.

\begin{con}\label{n1}
	Suppose $c$, $k$ and $t$ are positive integers with $c\ge3$ and $k\ge t+3$.
	Let $T\in\ut$ and $L,M\in\uuu$ with $T\subseteq L\cap M$ and $|L\cap M|\ge t+\min\{t,2\}$. Write
	$$\mn_1(T,L,M)=\left\{F\in\uu: T\subseteq F,\ |F\cap L|\ge t+1\right\}\cup\left\{F\in\uu: T\not\subseteq F,\ |F\cap M|=k-2\right\}.$$
	Notice that $|\bigcap_{F\in\mn_1(T,L,M)}F|<t$, and $\mn_1(T,L,M)$ and $\mn_1(T,M,L)$ are cross $t$-intersecting.
\end{con}

\begin{con}\label{n2}
	Suppose $c$, $k$ and $t$ are positive integers with $c\ge3$ and $k\ge t+3$.
	Let $Z\in U^{[ck]}_{c,t+2}$. Write
	$$\mn_2(Z)=\left\{F\in\uu: |F\cap Z|\ge t+1\right\}.$$
		Notice that $|\bigcap_{F\in\mn_2(Z)}F|<t$, and $\mf$ and $\mg$ are cross $t$-intersecting if $\mf=\mg=\mn_2(Z)$.
\end{con}

\begin{con}\label{n3}
	Suppose $c$, $k$ and $t$ are positive integers with $c\ge3$ and $k\ge 4$.
	Let $A_1=\{e_1,e_2\}$, $A_2=\{e_3,e_4\}$, $B_1=\{e_1,e_3\}$, $B_2=\{e_2,e_4\}$, $C=\{e_1,e_4\}\in U_{c,2}^{[ck]}$. Write
	$$\mn_3(A_1,A_2,C)=\left\{F\in\uu: A_1\subseteq F,\ \mbox{or}\ A_2\subseteq F,\ \mbox{or}\ C\subseteq F\right\}.$$
		Notice that $\bigcap_{F\in\mn_3(A_1,A_2,C)}F$ is empty, and $\mn_3(A_1,A_2,C)$ and $\mn_3(B_1,B_2,C)$ are cross $1$-intersecting.
\end{con}

Our second result is stated as follows, from which we can derive an analogue of the Hilton-Milner theorem for uniform set partitions. %It can be seen as a  product version of the Hilton-Milner theorem.

\begin{thm}\label{HM-perfect}
	Let $c$, $k$ and $t$ be positive integers with $c\ge 6$ and $k\ge t+3$.
	Suppose $\mf$ and $\mg$ are cross $t$-intersecting subfamilies of $\uu$ such that both $|\bigcap_{F\in\mf}F|$ and $|\bigcap_{G\in\mg}G|$ are less than $t$.
	If $c\ge4\log_2t+7$ or $k\ge2t+3$, and $|\mf||\mg|$ takes the maximum value, then  one of the following holds.
	\begin{itemize}
		\item[\rm{(\ro1)}]  $\mf=\mn_1(T,L,M)$ and $\mg=\mn_1(T,M,L)$ for some $T\in\ut$ and $L,M\in\uuu$ with $T\subseteq L\cap M$ and $|L\cap M|\ge t+\min\{t,2\}$.
		\item[\rm{(\ro2)}] $\mf=\mg=\mn_2(Z)$ for some  $Z\in U^{[ck]}_{c,t+2}$.
		\item[\rm{(\ro3)}] $\mf=\mn_3(A_1,A_2,C)$ and $\mg=\mn_3(B_1,B_2,C)$ for some $A_1=\{e_1,e_2\},A_2=\{e_3,e_4\},B_1=\{e_1,e_3\},B_2=\{e_2,e_4\},C=\{e_1,e_4\}\in U_{c,2}^{[ck]}$.
	\end{itemize}
	Moreover, if $(k,t)\in\{(4,1),(5,1)\}$, then {\rm(\ro1)}, {\rm(\ro2)} or {\rm(\ro3)} holds; if $k=t+3$ and $t\ge 2$, then {\rm(\ro1)} or {\rm(\ro2)} holds; if $t+4\le k\le2t+3$ with $(k,t)\neq(5,1)$, then {\rm(\ro2)} holds; if $k\ge2t+4$, then {\rm(\ro1)} holds.
\end{thm}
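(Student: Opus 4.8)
\textbf{Proof proposal for Theorem \ref{HM-perfect}.}

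The plan is to treat Theorem \ref{HM-perfect} as a stability/second-layer refinement of Theorem \ref{EKR-perfect}: having removed the ``trivial'' optimum $\{F:T\subseteq F\}$ by the hypothesis $|\bigcap_{F\in\mf}F|<t$ and $|\bigcap_{G\in\mg}G|<t$, we must identify the next-largest product. First I would set up the standard bipartite‑type counting framework: for a fixed family $\mf$, write $\mg\subseteq\{G\in\uu: |G\cap F|\ge t\ \forall F\in\mf\}=:\mf^{(t)}$, so that $|\mf||\mg|\le|\mf||\mf^{(t)}|$, and the whole problem reduces to maximizing $|\mf||\mf^{(t)}|$ over all $\mf$ that are themselves ``non-trivial'' in the above sense; symmetrizing in $\mf,\mg$ when equality is approached. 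The three constructions $\mn_1,\mn_2,\mn_3$ give explicit lower bounds for this product, and one computes $|\mn_i|\,|\mn_i^{(t)}|$ (or the relevant cross-product) as a polynomial in $c,k$ via the Stirling/partition counting that already underlies Meagher--Moura \cite{MM2005}; the point of the case analysis in the ``Moreover'' part is exactly which of these three polynomials dominates in which range of $(k,t)$.

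The core of the argument is an \emph{upper bound} showing that any non-trivial cross $t$-intersecting pair has $|\mf||\mg|$ no larger than $\max_i |\mn_i|\,|\mn_i^{(t)}|$, with equality only for the listed configurations. Here I would follow the Hilton--Milner/Frankl strategy adapted to partitions. Pick $F_0\in\mf$; since $\mf$ is not trivially $t$-intersecting there exist members of $\mf$ avoiding any fixed $t$-set of blocks of $F_0$, which forces every $G\in\mg$ to meet $F_0$ in a way that cannot be ``concentrated'' on $t$ fixed blocks, hence $\mg$ is confined to partitions containing one of boundedly many $(t+1)$- or $(k-2)$-block patterns built from at most two or three reference members $L,M$ (or the four-edge gadget in $\mn_3$). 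Quantitatively, one shows: either $\mf$ is ``almost trivial'' — all but a bounded number of its members contain a common $t$-set $T$, and then a shifting/compression plus a direct count pins $\mf$ down to (a subfamily of) $\mn_1(T,L,M)$ or $\mn_2(Z)$ — or $\mf$ is ``spread'', in which case $\mf^{(t)}$, hence $\mg$, is so small that $|\mf||\mg|$ falls strictly below all three benchmarks. The delicate bookkeeping is the comparison of the leading and sub-leading terms of the partition counts $\theta(c,k,\cdot)$: the hypotheses $c\ge 4\log_2 t+7$ or $k\ge 2t+3$, and $c\ge 6$, are precisely what make these comparisons go through, mirroring the role of $n_0(k,t)$ in the classical setting.

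For the ``Moreover'' dichotomy I would simply carry the explicit formulas: $|\mn_2(Z)|$ grows like a fixed polynomial in the number of ways to extend a $(t+1)$-subset of a $(t+2)$-set to a perfect matching, while $|\mn_1(T,L,M)|$ has a term corresponding to $T\subseteq F$ (size $\sim\theta(c,k,t)$-type, the dominant trivial-like contribution) plus a lower-order correction, and $\mn_3$ is only competitive when $k$ is very small relative to $t$ (indeed only at $t=1$, $k\in\{4,5\}$). Comparing these as functions of $k$ with $t$ fixed, one finds a single threshold behavior: $\mn_3$ wins only in the two sporadic cases, $\mn_2$ wins on an interval $t+3\le k\le 2t+3$ (with the stated exception), $\mn_1$ wins for $k\ge 2t+4$, and at the two boundary values $k=t+3$ both $\mn_1$ and $\mn_2$ (and at $(4,1),(5,1)$ also $\mn_3$) can be optimal. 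I expect the main obstacle to be the ``spread'' case of the upper bound: ruling out intermediate structures — families $\mf$ that are neither almost-trivial nor genuinely small — requires a careful kernel/sunflower analysis on perfect matchings and sharp estimates on $|\mf^{(t)}|$ when $\mf$ has more than, say, two ``directions''; getting the constants $4\log_2 t+7$ and $2t+3$ out of this, rather than something weaker, is where the real work lies.
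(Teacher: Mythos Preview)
Your outline has the right shape at the very end (comparing $|\mn_1|,|\mn_2|,|\mn_3|$ as explicit functions of $c,k,t$ to get the ``Moreover'' dichotomy), but the core upper-bound step is a genuine gap. The paper does \emph{not} argue via ``almost trivial vs.\ spread'' and does not use shifting/compression or sunflowers; indeed there is no obvious shift operation on $c$-uniform partitions that preserves the cross $t$-intersecting property, so invoking ``shifting/compression'' here is not a placeholder for a routine step but a missing idea. The actual organising invariant is the $t$-covering number: one first shows (via the expansion bound $|\mf|\le g(c,k,t,\tau_t(\mg))$ of Lemma~\ref{fangsuo1}, not via $|\mf||\mf^{(t)}|$) that $\tau_t(\mf)=\tau_t(\mg)=t+1$, and then runs a \emph{second-layer} covering analysis on the families $\mt_t(\mf),\mt_t(\mg)\subseteq U^{[ck]}_{c,t+1}$ of minimum $t$-covers, splitting on whether $\tau_t(\mt_t(\mf))$ and $\tau_t(\mt_t(\mg))$ equal $t$ or $t+1$. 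Each of the three outcomes $\mn_1,\mn_2,\mn_3$ corresponds to a specific structure of $\mt_t(\mf),\mt_t(\mg)$ (a ``fan'' over a common $T$; a $(t{+}1)$-uniform clique inside a $(t{+}2)$-set $Z$; the four-edge gadget), and Lemmas~\ref{RS-intersecting}--\ref{not-intersecting} classify cross $t$-intersecting subfamilies of $U^{[ck]}_{c,t+1}$ to force one of these. Your sketch never isolates this cover-of-covers structure, and without it there is no mechanism that produces exactly the three constructions rather than a zoo of intermediate configurations.

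There is also a quantitative slip: $|\mn_1(T,L,M)|$ is \emph{not} of order $\theta(c,k,t)$; its main term is $(k{-}t{-}1)\theta(c,k,t+1)$ (this is essentially $f_0(c,k,t)$ in \eqref{f0}), one order down, because membership requires $|F\cap L|\ge t+1$ on top of $T\subseteq F$. Getting this scale right is what makes the comparison with $f_2(c,k,t)=(t{+}2)\theta(c,k,t+1)-\cdots$ delicate and is exactly where the threshold $k=2t+3$ appears. Finally, $\mn_3$ does not ``win'' at $(4,1),(5,1)$: one checks $|\mn_3|=f_2(c,k,1)$, and at those two points $f_1=f_2$ as well, so all three constructions tie --- which is why the theorem lists all of (i)--(iii) there.
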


In 2013,  Wang and Zhang \cite{WZ} completely solved the problem of maximizing the sum of sizes of cross $t$-intersecting families for sets, vector spaces and symmetric groups. 
These problems were reduced to describing all fragments in bipartite graphs. 
Inspired by their work, we investigate the fragments in a specified bipartite graph, and then characterize cross $t$-intersecting families of uniform set partitions with maximum sum of their sizes.

\begin{thm}\label{SUM}
	Let $c$, $k$ and $t$ be positive integers with $c\ge3$ and $k\ge t+2$. Suppose $\mf$ and $\mg$ are cross $t$-intersecting subfamilies of $\uu$ such that $|\mf|\le|\mg|$ and $|\mf|+|\mg|$ is maximum.
	If $c\ge3+2\log_2t$ or $k\ge2t+2$, then  $\mf=\{C\}$ and $\mg=\{F\in\uu:|F\cap C|\ge t\}$ for some $C\in\uu$.
\end{thm}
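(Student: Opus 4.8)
The plan is to reduce the sum-maximization problem to a statement about fragments in a bipartite graph, following the strategy of Wang and Zhang~\cite{WZ}. Set $N=\binom{ck}{c,c,\dots,c}/k!$ (the number of elements of $\uu$) and let $H$ be the bipartite graph on vertex classes $\uu\sqcup\uu$ in which $F$ in the first class is joined to $G$ in the second class exactly when $|F\cap G|<t$, i.e.\ when $F$ and $G$ are \emph{not} $t$-intersecting. A pair of cross $t$-intersecting families $(\mf,\mg)$ is precisely a pair of subsets with no edge of $H$ between them; maximizing $|\mf|+|\mg|$ with $|\mf|\le|\mg|$ is equivalent to finding a minimum \emph{fragment} of $H$, that is, a set $S\subseteq\uu$ minimizing $|S\cup N_H(S)|$ subject to $1\le |S|\le N-|N_H(S)|$ (so that $\mf=S$, $\mg=\uu\setminus N_H(S)$ is a legitimate nontrivial pair), and then showing the optimum is attained only by $S=\{C\}$ for a single partition $C$, which gives $N_H(S)=\{G:|G\cap C|<t\}$ and hence $\mg=\{F\in\uu:|F\cap C|\ge t\}$.

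The first step is to record the candidate value: for $S=\{C\}$ one gets $|\mf|+|\mg| = 1 + |\{F\in\uu:|F\cap C|\ge t\}|$, and I would want to show this exceeds the trivial-looking alternative coming from Theorem~\ref{EKR-perfect} (the balanced choice $\mf=\mg=\{F:T\subseteq F\}$ with $T\in\ut$), whose sum is $2|\{F\in\uu:T\subseteq F\}|$. Comparing these two quantities is a matching-count estimate: $|\{F:T\subseteq F\}|$ is the number of perfect matchings of a complete $c$-uniform hypergraph on $c(k-t)$ vertices, while $|\{F:|F\cap C|\ge t\}|$ dominates the same count summed over the $\binom{k}{t}$ choices of which $t$ blocks of $C$ are kept, and one checks that $\binom{k}{t}$ many copies of the smaller matching number already outgrow twice a single copy — this is where the hypotheses $c\ge 3+2\log_2 t$ or $k\ge 2t+2$ enter, exactly as in Theorem~\ref{EKR-perfect}, guaranteeing the "spread-out" configuration beats the "concentrated" one.

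The technical heart is the fragment analysis: one must show that any $S$ with $|S|\ge 2$ has $|S\cup N_H(S)|$ strictly larger than the value achieved by a singleton, equivalently that $N_H(S)$ cannot shrink fast enough as $|S|$ grows past $1$. I would argue by a shifting/compression argument on $\uu$ (replacing blocks of partitions in $S$ by lexicographically earliest available $c$-sets, a compression that does not decrease any $|F\cap G|$ and hence does not enlarge $N_H(S)$), reducing to the case where $S$ is itself "compressed"; then a short case analysis on whether the partitions in $S$ have a common $(t+1)$-wise or $t$-wise structure shows that either $N_H(S)$ is essentially all of $\uu$ (so $|S\cup N_H(S)|$ is huge) or $S$ has been driven down to a single partition. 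An alternative, cleaner route is to invoke a Kruskal--Katona-type or kernel argument directly: the neighborhood $N_H(S)$ in $H$ corresponds to partitions avoiding $t$-intersection with \emph{every} member of $S$, and a "no-homomorphism / cross-intersecting" lemma (of the kind underlying \cite{WZ}) bounds $|\uu\setminus N_H(S)| = |\{G:\ |G\cap F|\ge t\ \text{for all }F\in S\}|$ from above by something maximized at $|S|=1$; combined with $|S|\le|\mg|$ this forces the singleton.

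The main obstacle I anticipate is the fragment minimization itself: unlike the set or vector-space cases in~\cite{WZ}, the host "graph" here is the uniform-partition association scheme, where the relevant eigenvalue/counting estimates are less standard, so verifying that $N_H(S)$ does not contract for $|S|\ge 2$ requires carefully controlling the number of $c$-uniform partitions of $[ck]$ that are $t$-intersecting with two prescribed ones but not with a common $t$-block — precisely the kind of matching-enumeration inequality that the hypotheses $c\ge 3+2\log_2 t$ or $k\ge 2t+2$ are designed to make tractable. Once that inequality is in hand, the uniqueness of the extremal pair (namely $\mf=\{C\}$) falls out from the strictness in the compression step, and re-expanding $\mg=\uu\setminus N_H(\{C\})=\{F\in\uu:|F\cap C|\ge t\}$ completes the proof.
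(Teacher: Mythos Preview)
Your setup is correct and matches the paper: the bipartite graph on two copies of $\uu$ with edges recording non-$t$-intersection, and the reduction of the sum problem to the study of fragments in the sense of Wang--Zhang~\cite{WZ}. After that point, however, your plan diverges from the argument that actually works, and the divergence is a genuine gap.

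The paper does \emph{not} use shifting, compression, or Kruskal--Katona. There is no known compression on $c$-uniform partitions of $[ck]$ that preserves cross-$t$-intersection in the way you need; replacing blocks by ``lexicographically earliest available $c$-sets'' will in general destroy the partition structure or fail to be monotone in $|F\cap G|$, so that route is not viable without substantial new machinery. Likewise the association-scheme/eigenvalue approach you mention is exactly the part that is not available here, as you yourself flag.

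What the paper does instead is purely group-theoretic, exploiting the full strength of the Wang--Zhang framework. The key facts you are missing are: (i) the action of $\Gamma=S_{ck}$ on $\uu$ is \emph{primitive}, because the point stabilizer $S_c\wr S_k$ is a maximal subgroup of $S_{ck}$; (ii) by \cite[Theorem~1.1 and Lemma~2.2]{WZ}, a minimum non-trivial fragment $\ms$ must be \emph{semi-imprimitive}, and by \cite[Proposition~2.3]{WZ} there is no fragment of size~$2$ (since $S_{ck}$ is not a quotient embedding into a dihedral group); (iii) then \cite[Proposition~2.4]{WZ} gives a second fragment $\mt$ with $\ms\cap\mt=\{C\}$, and one shows $\ms\cup\mt$ is $\Gamma_C$-invariant. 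From this invariance the paper proves the crucial structural fact that $\ms$ is itself \emph{$t$-intersecting}. At that point Theorem~\ref{EKR-perfect} is invoked---not merely to compare the singleton configuration with the trivial one, as you suggest, but to bound $|\ms|\le\theta(c,k,t)$. A short inclusion--exclusion computation with the numbers $|N_i(C)|$ and the identity for $\theta(c,k,i)$ then yields the numerical contradiction $\theta(c,k,t)-(k-t)\theta(c,k,t+1)\le 0$, which is false under the hypotheses $c\ge 3+2\log_2 t$ or $k\ge 2t+2$.

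In short: the fragment reduction is right, but the mechanism that kills non-trivial fragments is primitivity of the $S_{ck}$-action combined with the semi-imprimitive structure theory from~\cite{WZ} and Theorem~\ref{EKR-perfect}, not compression.
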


The rest of this paper is organized as follows. In Sections \ref{T11}--\ref{T13}, we prove Theorems \ref{EKR-perfect}--\ref{SUM}, respectively. All maximal cross $t$-intersecting subfamilies of $\uu$ for $k=t+2$ are characterized in Section \ref{t+2}, which shows that the hypothesis $k\ge t+3$ in Theorem \ref{HM-perfect} is essential. Finally in Section \ref{COMPUTION}, some inequalities used in our proofs are presented.

\section{Proof of Theorem \ref{EKR-perfect}}\label{T11}

Let $\mf\subseteq U^{[ck]}_{c,\ell}$ and $S\in U^{[ck]}_{c,s}$. 
We say $S$ is a \emph{$t$-cover} of $\mf$ if $|S\cap F|\ge t$ for each $F\in\mf$, and the minimum size $\tau_t(\mf)$ of a $t$-cover of $\mf$ is the \emph{$t$-covering number} of $\mf$.
For convenience, write
$\mf_S=\{F\in\mf: S\subseteq F\}$
and denote the set of all minimum $t$-covers of $\mf$ by $\mt_t(\mf)$.

\begin{lem}\label{fangsuo}
	Let $c$, $k$ and $t$ be positive integers with $c\ge2$ and $k\ge t+2$. Suppose $\mf\subseteq\uu$ and $G\in\uu$ is a $t$-cover of $\mf$. If  $|G\cap S|=r<t$ for some $S\in U^{[ck]}_{c,s}$, then
	for each $i\in\{1,2,\dots,t-r\}$, there exists $R\in U^{[ck]}_{c,s+i}$ such that $S\subseteq R$ and
	$$|\mf_S|\le{k-s\c i}|\mf_R|.$$
\end{lem}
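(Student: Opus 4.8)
The plan is to induct on $i$, building up $R$ one block at a time, so it suffices to prove the case $i=1$ and then iterate. So suppose $|G\cap S|=r<t$ for some $S\in U^{[ck]}_{c,s}$; I want to find a single $c$-block $e$ with $e\notin S$, $e\cup S\in U^{[ck]}_{c,s+1}$ (i.e. $e$ is disjoint from all blocks of $S$), such that $|\mf_S|\le (k-s)|\mf_{S\cup\{e\}}|$. The point is that every $F\in\mf_S$ contains $S$, hence uses up $s$ of its $k$ blocks on $S$; since $G$ is a $t$-cover of $\mf$ and $|G\cap S|=r<t$, the partition $F$ must contain at least $t-r\ge 1$ blocks of $G$ that lie outside $S$. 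In particular $F$ contains at least one block $e\in G\setminus S$ with $e$ disjoint from every block of $S$ (this uses that $F\supseteq S$, so any block of $F$ not among the $s$ blocks of $S$ is automatically disjoint from $S$).

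Now for each $F\in\mf_S$ choose such a block $e=e(F)\in G\setminus S$; this defines a map from $\mf_S$ into the set of blocks of $G$ not contained in $S$, and the fibre over a fixed block $e$ is exactly $\{F\in\mf_S: e\in F\}=\mf_{S\cup\{e\}}$ (here $e$ being disjoint from $S$ guarantees $S\cup\{e\}\in U^{[ck]}_{c,s+1}$). The number of blocks of $G$ is $k$, and at least $s-r$... actually I should be slightly careful: the relevant codomain is $\{$blocks of $G$ disjoint from $S\}$, which has size at most $k-r$, but the crude bound $k-s$ will suffice if I instead note that the codomain can be taken inside $\{$blocks disjoint from all of $S\}$ intersected with... let me instead just use that the number of available blocks is at most $k$; then pigeonhole gives some $e$ with $|\mf_{S\cup\{e\}}|\ge |\mf_S|/k$. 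To get the sharper bound $\binom{k-s}{1}=k-s$, observe that a block of $F\in\mf_S$ lying in $G\setminus S$ and disjoint from $S$ occupies one of the $k-s$ "free" block-slots of $F$ beyond $S$; but $G\setminus S$ may have more than $k-s$ blocks. The clean way: the blocks of $G$ disjoint from $S$ number at most $k-s$ is false in general, so the correct statement to prove by pigeonhole is with codomain $\{$blocks $e$ of $G$ with $S\cup\{e\}\in U^{[ck]}_{c,s+1}\}$; I expect the paper intends $k-s$ because each such $e$, together with $S$, forms an $(s+1)$-family and there are at most $k-s$ ways to extend, but honestly the codomain here is a subset of the blocks of the fixed $G$, of size at most $k$. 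I will therefore present the pigeonhole with the set $W$ of blocks of $G$ that are disjoint from $S$, note $|W|\le k-s$ because... hmm.

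Let me restate the mechanism cleanly. Fix $F\in\mf_S$. Since $G$ is a $t$-cover, $|G\cap F|\ge t$; since $|G\cap S|=r$ and $S\subseteq F$, the blocks in $(G\cap F)\setminus S$ number at least $t-r\ge 1$. Each such block lies in $F$ but not in $S$, hence is disjoint from $\bigcup S$; moreover it lies in $G$. So the set $W:=\{e\in G: e\cap(\bigcup S)=\emptyset\}$ is nonempty-intersected by every $F\in\mf_S$, and $|W|=k-|\{e\in G:e\subseteq \bigcup S\}|$. Since $S\subseteq F$ and $\bigcup S$ is a union of $s$ blocks, and $G\cap F\supseteq$ at least $r$ blocks inside $\bigcup S$ — actually every block of $G$ inside $\bigcup S$ is one of the $s$ blocks $S$ themselves (a $c$-block contained in a union of $s$ disjoint $c$-blocks, with $s$... no, a union of $s$ blocks of size $c$ each could contain other $c$-subsets). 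I think the honest clean bound uses $W\subseteq G$ so $|W|\le k$; if the paper claims $k-s$ it is because $G$ contains the $r$ blocks of $G\cap S$ which certainly lie in $\bigcup S$, plus possibly more, but at minimum $|W|\le k-r$, and since $r$ could be $0$ this gives $k-s$ only when... I will simply prove the statement with the binomial coefficient as stated by taking, at step $i$, the codomain to be the set of $i$-subsets of blocks of $G$ disjoint from $\bigcup S$, whose size I will bound by $\binom{k-s}{i}$ using that those blocks, being disjoint from $\bigcup S$ and from each other, extend $S$ to an $(s+i)$-family and there are at most $k-s$ "slots" — I will fill in that counting detail carefully. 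Then pick the popular $i$-set to be $R\setminus S$.

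\textbf{Main obstacle.} The genuine content is just the double-counting/pigeonhole above; the only delicate point — and the step I expect to need the most care — is justifying the exact binomial factor $\binom{k-s}{i}$ rather than something like $\binom{k}{i}$ or $\binom{k-r}{i}$. This hinges on controlling how many blocks of the fixed $t$-cover $G$ can be disjoint from $\bigcup S$: one must argue that after removing the blocks of $G$ meeting $\bigcup S$ (there are at least $s$ of them, since each of the $s$ blocks of $S$ either lies in $G$ or meets some block of $G$... no), the remainder has size at most $k-s$. I will handle this by working inside a partition $F_0\in\mf_S$ (if $\mf_S=\emptyset$ there is nothing to prove): in $F_0$ the $s$ blocks of $S$ are genuine blocks, so $F_0$ has exactly $k-s$ blocks disjoint from $\bigcup S$; since every block of $W=\{e\in G: e\cap \bigcup S=\emptyset\}$ that is actually \emph{used} by some $F\in\mf_S$ is a block of that $F$ and hence would need to be counted among various partitions' free slots — the cleanest fix is to choose, for each $F\in\mf_S$, not a single block but the $i$-subset of its free blocks that lie in $G$, and observe any such $i$-subset $\{e_1,\dots,e_i\}$ satisfies $S\cup\{e_1,\dots,e_i\}\in U^{[ck]}_{c,s+i}$ and is one of at most $\binom{k-s}{i}$ possibilities because within the single $t$-cover $G$... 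I concede this needs the observation that $|W|\le k-s$, which I will establish by noting $\bigcup S$ meets at least $s$ blocks of $G$ (namely: the $s$ blocks of $S$ are each a $c$-set; map each block $A\in S$ to a block of $G$ meeting it — such exists since $|G\cap A|$... ). I'll finalize this small lemma-within-the-proof and then the pigeonhole closes it.
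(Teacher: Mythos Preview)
Your overall strategy is the paper's: cover $\mf_S$ by the families $\mf_R$ as $R$ ranges over
\[
\mr_i=\{R\in U^{[ck]}_{c,s+i}:\ S\subseteq R\subseteq S\cup G\},
\]
bound $|\mr_i|$ by $\binom{k-s}{i}$, and take the most popular $R$. You correctly isolate the only nontrivial point---why the set $W=\{e\in G:\ e\cap\bigcup S=\emptyset\}$ has size at most $k-s$---but you do not resolve it. Your two attempted fixes do not work: working inside a single $F_0\in\mf_S$ only tells you $F_0$ has $k-s$ free blocks, not that $G$ does; and ``map each block $A\in S$ to a block of $G$ meeting it'' need not be injective, since one block of $G$ can meet several blocks of $S$.

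The missing observation is one line: the blocks in $W$ are pairwise disjoint $c$-subsets of $[ck]\setminus\bigcup S$, a set of cardinality $ck-cs=c(k-s)$, so $|W|\le k-s$. Equivalently, since $G$ partitions $[ck]$, the blocks of $G$ meeting $\bigcup S$ must cover all $cs$ elements of $\bigcup S$, and each covers at most $c$ of them, so there are at least $s$ such blocks. With this in hand, $|\mr_i|=\binom{|W|}{i}\le\binom{k-s}{i}$ and your pigeonhole finishes the proof exactly as in the paper.

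One further correction: your opening plan of proving the case $i=1$ and then iterating does \emph{not} yield the stated bound. After one step you land at $R_1=S\cup\{e\}$ with $|R_1|=s+1$ and $|G\cap R_1|=r+1$; iterating gives
\[
(k-s)(k-s-1)\cdots(k-s-i+1)=i!\binom{k-s}{i},
\]
which is too large by a factor $i!$. To get $\binom{k-s}{i}$ you must, as you eventually propose, pick the $i$ blocks of $W$ simultaneously: each $F\in\mf_S$ contains at least $t-r\ge i$ blocks of $W$, hence at least one $i$-subset of $W$, and then pigeonhole over $\binom{W}{i}$.
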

\begin{proof}
	If $\mf_S=\emptyset$, then there is nothing to prove. Thus we may suppose that $\mf_S\neq\emptyset$.
	For $i\in\{1,2,\dots,t-r\}$, set
	$$\mr_i=\{R\in U^{[ck]}_{c,s+i}: S\subseteq R\subseteq G\cup S\}.$$	
	Let $F\in\mf_S$. Since $|F\cap G|\ge t$, we know $F$ contains at least $t-r$ blocks in $G\bs S$.
	We further conclude that
	 $$\mr_i\neq\emptyset,\quad\mf_S=\{F\in\mf_S:|F\cap(G\cup S)|\ge s+t-r\}.$$
Then
	$$|\mf_S|=\left|\bigcup_{R\in\mr_i}\mf_R\right|\le\sum_{R\in\mr_i}|\mf_R|.$$	
	Since $G\in\uu$ and $|G\cap S|<t$, there are at most $k-s$ blocks in $G\bs S$ which are disjoint with each block in $S$,
	implying that $|\mr_i|\le{k-s\c i}$. Pick $R\in\mr_i$ such that $|\mf_R|$ takes the maximum value. Then
	$$|\mf_S|\le{k-s\c i}|\mf_R|,$$
	as desired.
\end{proof}

Let $c$, $k$, $t$ and $z$  be positive integers with $c\ge2$, $k\ge t+2$ and $k\ge z$. Denote the number of the elements in $\uu$ containing a fixed member of $U^{[ck]}_{c,z}$ by $\zt$. Observe that
$$\zt=\dfrac{1}{(k-z)!}\prod_{i=z}^{k-1}{(k-i)c\c c}.$$
We also write
$$g(c,k,t,z)=\zt\binom{z}{t}\prod_{j=1}^{z-t}\left(k-(t+j-1)\right).$$

\begin{lem}\label{fangsuo1}
Let $c$, $k$ and $t$ be positive integers with $c\ge2$ and $k\ge t+2$.	 If $\mf$ and $\mg$ are cross $t$-intersecting subfamilies of $\uu$, then	
\begin{equation}\label{eq-fangsuo1-1}
|\mf|\le\theta(c,k,\tau_t(\mg)){\tau_t(\mf)\c t}\prod_{j=1}^{\tau_t(\mg)-t}\left(k-(t+j-1)\right).
\end{equation}

\end{lem}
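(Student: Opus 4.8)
The plan is to combine the familiar covering-number reduction with an iterated use of Lemma~\ref{fangsuo}, the one twist being that the auxiliary cover used to ``grow'' the anchor set will be supplied by $\mg$, which is why $\tau_t(\mg)$ rather than $\tau_t(\mf)$ ends up in the bound. Throughout I may assume $\mf\neq\emptyset$ and $\mg\neq\emptyset$, the inequality being vacuous otherwise; note that then every member of $\mf$ (resp.\ of $\mg$) is a $t$-cover of $\mg$ (resp.\ of $\mf$) of size $k$, so $\tau_t(\mf)$ and $\tau_t(\mg)$ are well defined and satisfy $t\le\tau_t(\mf),\tau_t(\mg)\le k$, and there is a minimum $t$-cover $S^*\in\mt_t(\mf)$ to work with.

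First I would reduce to a single anchor set. Since $|F\cap S^*|\ge t$ for every $F\in\mf$, picking $t$ of the blocks of $F$ that lie in $S^*$ shows that $\mf$ is the union of the subfamilies $\mf_T$ taken over the $t$-element subsets $T$ of $S^*$; consequently
$$|\mf|\ \le\ \binom{\tau_t(\mf)}{t}\,|\mf_{T_0}|$$
for some $T_0\in U^{[ck]}_{c,t}$ with $T_0\subseteq S^*$. So it remains to bound $|\mf_{T_0}|$ by $\theta(c,k,\tau_t(\mg))\prod_{j=1}^{\tau_t(\mg)-t}\bigl(k-(t+j-1)\bigr)$.

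Set $q=\tau_t(\mg)$ and build a chain $T_0=R_t\subseteq R_{t+1}\subseteq\dots\subseteq R_q$ with $R_j\in U^{[ck]}_{c,j}$ as follows. As long as $j<q$, the set $R_j$ has fewer than $q$ blocks, so by minimality of $\tau_t(\mg)$ it is not a $t$-cover of $\mg$; hence there is a partition $G\in\mg$ with $|G\cap R_j|<t$. Because $\mf$ and $\mg$ are cross $t$-intersecting, $G$ is a $t$-cover of $\mf$, and since $k\ge t+2$ we may invoke Lemma~\ref{fangsuo} with this $G$, with $S=R_j$, and with $i=1$: it yields $R_{j+1}\supseteq R_j$ of size $j+1$ such that $|\mf_{R_j}|\le(k-j)\,|\mf_{R_{j+1}}|$. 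Iterating for $j=t,t+1,\dots,q-1$ and then using the trivial bound $|\mf_{R_q}|\le\bigl|\{F\in\uu:R_q\subseteq F\}\bigr|=\theta(c,k,q)$, we obtain
$$|\mf_{T_0}|\ \le\ \Bigl(\prod_{j=t}^{q-1}(k-j)\Bigr)\theta(c,k,q),$$
where the product is empty (and no growing step is needed) when $q=t$. Reindexing $\prod_{j=t}^{q-1}(k-j)=\prod_{j=1}^{q-t}\bigl(k-(t+j-1)\bigr)$ and combining with the reduction above gives exactly \eqref{eq-fangsuo1-1}.

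Honestly there is no serious obstacle here: once Lemma~\ref{fangsuo} is available the proof is bookkeeping. The only point that needs a moment's care is recognizing that at every stage the cover $G$ fed into Lemma~\ref{fangsuo} must be drawn from $\mg$ (its existence guaranteed precisely by the definition of $\tau_t(\mg)$), and that it qualifies as a $t$-cover of $\mf$ only by virtue of the cross $t$-intersection hypothesis; this is exactly the mechanism that makes $\theta(c,k,\tau_t(\mg))$ and the factor $\prod_{j=1}^{\tau_t(\mg)-t}\bigl(k-(t+j-1)\bigr)$ appear rather than quantities attached to $\mf$ itself.
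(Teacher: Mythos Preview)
Your proof is correct and follows essentially the same approach as the paper's: reduce $\mf$ to the anchored families $\mf_T$ over $t$-block subsets of a minimum $t$-cover of $\mf$, then iteratively apply Lemma~\ref{fangsuo} using a $G\in\mg$ (which exists at each step by minimality of $\tau_t(\mg)$ and serves as a $t$-cover of $\mf$ by cross $t$-intersection) to grow the anchor up to size $\tau_t(\mg)$, finishing with the trivial bound $|\mf_{R_q}|\le\theta(c,k,q)$. The only cosmetic difference is that the paper bounds each nonempty $\mf_{Y_0}$ individually while you pick the maximal one up front; this changes nothing.
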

\begin{proof}
	Suppose that $\tau_t(\mf)=z$ and $Z\in\mt_t(\mf)$. Observe that
	\begin{equation}\label{F-FZ}
		\mf\subseteq\bigcup_{Y\in{Z\c t}}\mf_Y.
	\end{equation}
	  If $\tau_t(\mg)=t$, then \eqref{eq-fangsuo1-1} follows from \eqref{F-FZ}. Next assume that $\tau_t(\mg)\ge t+1$.
	
	Pick $Y_0\in{Z\c t}$ with $\mf_{Y_0}\neq\emptyset$. 
	Notice that $Y_0$ is not a $t$-cover of $\mg$. Then there exists $G_0\in\mg$ with $|G_0\cap Y_0|<t$.
	Since $G_0$ is a $t$-cover of $\mf$, by Lemma \ref{fangsuo}, we have
	$$|\mf_{Y_0}|\le\left(k-t\right)|\mf_{Y_1}|$$
	for some $Y_1\in U^{[ck]}_{c,t+1}$. Using Lemma \ref{fangsuo} repeatedly, we get  $Y_0\in U^{[ck]}_{c,t}$, $Y_1\in U^{[ck]}_{c,t+1}$, \dots, $Y_{\tau_t(\mg)-t}\in U^{[ck]}_{c,\tau_t(\mg)}$ with
	$$|\mf_{Y_{j-1}}|\le\left(k-(t+j-1)\right)|\mf_{Y_{j}}|$$
	for each $j\in\{1,\dots,\tau_t(\mg)-t\}$. Then
	\begin{align*}
		|\mf_{Y_0}|
		\le\prod_{j=1}^{\tau_t(\mg)-t}\left(k-(t+j-1)\right)\cdot|\mf_{Y_{\tau_t(\mg)-t}}|
		\le\theta(c,k,\tau_t(\mg))\prod_{j=1}^{\tau_t(\mg)-t}\left(k-(t+j-1)\right).
	\end{align*}
	This together with \eqref{F-FZ} yields \eqref{eq-fangsuo1-1}.
\end{proof}

In the rest of this papaer, for $\ell,m\in\{t,t+1,\dots,k\}$, we also say $\ma\subseteq U^{[ck]}_{c,\ell}$ and $\mb\subseteq U^{[ck]}_{c,m}$ are cross $t$-intersecting if $|A\cap B|\ge t$ for any $A\in\ma$ and $B\in\mb$.

\begin{lem}\label{cover-intersection}
Let $c$, $k$ and $t$ be positive integers with $c\ge2$ and $k\ge t+2$.	Suppose $\mf$ and $\mg$ are maximal cross $t$-intersecting subfamilies of $\uu$ with $\max\{\tau_t(\mf),\tau_t(\mg)\}\le k-2$. Then $\mt_t(\mf)$ and $\mt_t(\mg)$ are cross $t$-intersecting.	
\end{lem}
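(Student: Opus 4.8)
The plan is to argue by contradiction: suppose $\mt_t(\mf)$ and $\mt_t(\mg)$ are not cross $t$-intersecting, so there exist minimum $t$-covers $S\in\mt_t(\mf)$ and $W\in\mt_t(\mg)$ with $|S\cap W|=r<t$. The idea is that $W$ fails to be a $t$-cover of $\mf$ only by a little, so we can extend $S$ (or rather exploit the $t$-covering structure) to build a larger $c$-uniform partition $S'$ that is still a $t$-cover of $\mf$ but has size strictly less than $\tau_t(\mf)=|S|$, contradicting minimality; symmetrically for $\mg$. More precisely, first I would observe that since $S$ is a $t$-cover of $\mf$ and $|S\cap W|=r<t$, Lemma \ref{fangsuo} applies with the roles reversed — every $F\in\mf$ meets $W$ in fewer than $t$ blocks would be too strong, so instead the key point is that $W$, being a minimum $t$-cover of $\mg$ and hence (by maximality of $\mg$) "tight", interacts with $\mf$ through the cross $t$-intersecting condition.

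Here is the mechanism I expect to use. Since $\mf$ and $\mg$ are maximal cross $t$-intersecting, $\mf = \{F\in\uu : |F\cap G|\ge t \text{ for all } G\in\mg\}$ and symmetrically for $\mg$. Now take $S\in\mt_t(\mf)$ and suppose for contradiction some $W\in\mt_t(\mg)$ has $|S\cap W|<t$. I claim $W$ itself, as a member of $U^{[ck]}_{c,\tau_t(\mg)}$ with $\tau_t(\mg)\le k-2$, must actually cross $t$-intersect every $F\in\mf$: indeed $W$ is a $t$-cover of $\mg$, meaning $|W\cap G|\ge t$ for all $G\in\mg$, so $W\in\mg$ would follow if $W$ were a $c$-uniform partition — but $|W|=\tau_t(\mg)\le k-2<k$, so $W$ is not a full partition. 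The correct move is: extend $W$ to a partition $\widetilde W\in\uu$ with $W\subseteq\widetilde W$; then $\widetilde W$ is a $t$-cover of $\mg$, hence $\widetilde W\in\mg$ by maximality, hence $|F\cap\widetilde W|\ge t$ for all $F\in\mf$. But $|F\cap\widetilde W|\ge t$ does not immediately give $|F\cap W|\ge t$. So instead I would extend $W$ carefully: since $\tau_t(\mg)\le k-2$, there is room to complete $W$; among all completions, pick one, call it $\widetilde W\in\mg$, and then for each $F\in\mf$ we get $|F\cap\widetilde W|\ge t$. Then $S\cup(\widetilde W\setminus W)$-type surgery, combined with $|S\cap W|<t$ and Lemma \ref{fangsuo} applied to $\mf$ with the $t$-cover $\widetilde W$ and the set $S$, produces a refinement $R\supseteq S$ in some $U^{[ck]}_{c,s+i}$ controlling $|\mf_S|$ — but this goes the wrong direction (it bounds $|\mf_S|$, not $\tau_t$).

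The genuinely right approach, I think, is the following swap argument. Assume $|S\cap W|=r<t$ where $S\in\mt_t(\mf)$, $W\in\mt_t(\mg)$. Since $W$ is a $t$-cover of $\mg$ and $\mg$ is maximal, complete $W$ to $\widetilde W\in\mg\cap\uu$; since $S$ is a $t$-cover of $\mf$ and $\widetilde W\in\mg$, we have $|S\cap\widetilde W|\ge t$, so $\widetilde W$ contains at least $t-r$ blocks of $\widetilde W\setminus W$ that lie in $S$ — call this block-set $P$, with $|P|\ge t-r$ and $P\subseteq S\cap(\widetilde W\setminus W)$. Now I would show that $W\cup P$ is a $t$-cover of $\mf$: take $F\in\mf$; since $\widetilde W\in\mg$, $|F\cap\widetilde W|\ge t$; the blocks of $F\cap\widetilde W$ split into those in $W$ and those in $\widetilde W\setminus W$; if $|F\cap W|\ge t$ we are done, otherwise $F$ uses at least one block of $\widetilde W\setminus W$, but that's not obviously in $P$. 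This gap is exactly the main obstacle. To close it, I expect one needs to iterate: repeatedly replace "bad" blocks outside $S\cap W$, using at each stage that the current modified cover is still a member of (a superfamily of) $\mg$ via maximality, until one arrives at a $t$-cover of $\mf$ of size $\le |S| - 1 + |W| - r$; one then checks, using $|S\cap W|=r$ and $\tau_t(\mf)=|S|$, $\tau_t(\mg)=|W|$, that this size is strictly smaller than $|S|$ provided $|W|\le$ something, and the hypothesis $\tau_t(\mg)\le k-2$ is what guarantees the completions $\widetilde W$ exist and are nontrivial. The crux — and the step I expect to require the most care — is precisely this iterated-swap bookkeeping: showing each swap strictly decreases a suitable potential and never increases the cover size past the bound, so that finitely many steps yield a $t$-cover of $\mf$ smaller than $\tau_t(\mf)$, the desired contradiction. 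The same argument with $\mf,\mg$ interchanged handles $\mt_t(\mg)$ against $\mt_t(\mf)$, giving the symmetric conclusion.
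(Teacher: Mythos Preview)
Your framework is right --- complete the minimum $t$-cover $W\in\mt_t(\mg)$ to a full partition $\widetilde W\in\uu$, observe that $\widetilde W\in\mf$ by maximality (you wrote $\widetilde W\in\mg$; it must be $\mf$, since $\widetilde W\supseteq W$ is a $t$-cover of $\mg$), and then use that $S\in\mt_t(\mf)$ forces $|S\cap\widetilde W|\ge t$ --- but the argument stalls exactly where you say it does, and your proposed ``iterated swap'' does not close the gap. There is no potential that strictly decreases, and the bound $|S|-1+|W|-r$ you write down for the size of your prospective $t$-cover of $\mf$ is in general \emph{larger} than $\tau_t(\mf)=|S|$, so no contradiction is in sight. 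Trying to manufacture a smaller $t$-cover is the wrong target.

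The missing idea is that the completion $\widetilde W$ must be chosen so that $\widetilde W\cap S = W\cap S$; then $|S\cap\widetilde W|\ge t$ immediately gives $|S\cap W|\ge t$ and you are done. The paper achieves this in one stroke: first list all blocks of $S\setminus W$ that are disjoint from $\bigcup W$, say $e_1,\dots,e_\alpha$; extend $W\cup\{e_1,\dots,e_\alpha\}$ arbitrarily to a full partition $W\cup\{e_1,\dots,e_{k-\tau_t(\mg)}\}$; then \emph{scramble} the $e_i$ on the same ground set into new $c$-blocks $f_1,\dots,f_{k-\tau_t(\mg)}$ via a diagonal relabelling. Because $k-\tau_t(\mg)\ge 2$, each $f_i$ differs from every $e_j$; and since the only blocks of $S$ whose support lies inside $\bigcup_i e_i=[ck]\setminus\bigcup W$ are precisely $e_1,\dots,e_\alpha$, no $f_i$ is a block of $S$. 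Hence $F:=W\cup\{f_1,\dots,f_{k-\tau_t(\mg)}\}$ satisfies $F\cap S=W\cap S$, $F\in\mf$ by maximality, and $|S\cap W|=|S\cap F|\ge t$. The hypothesis $\tau_t(\mg)\le k-2$ is used exactly to make the scrambling possible; no iteration is needed.
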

\begin{proof}
Pick $T_1\in\mt_t(\mf)$ and $T_2\in\mt_t(\mg)$. Write
$$\alpha=\max\left\{r\in\mathbb{N}: T_2\cup A\in  U^{[ck]}_{c,t+r}\ \mbox{for some}\   A\in{T_1\bs T_2\choose r}\right\}.$$
Then there exist $e_1,\dots,e_{k-\tau_t(\mg)}\in{[ck]\c c}$ with $e_1,\dots,e_\alpha\in T_1$ and
$T_2\cup\{e_1,\dots,e_{k-\tau_t(\mg)}\}\in\uu$.
For each $i\in\{1,2,\dots,k-\tau_t(\mg)\}$, write $$e_i=\{v_i,v_{i+(k-\tau_t(\mg))},\dots,v_{i+(c-1)(k-\tau_t(\mg))}\},\quad f_i=\{v_{(i-1)c+1},v_{(i-1)c+2},\dots,v_{ic}\}.$$
Let $F=T_2\cup\{f_1,\dots,f_{k-\tau_t(\mg)}\}$.
We have $F\in\uu$ and $F\cap\{e_1,e_2,\dots,e_{k-\tau_t(\mg)}\}=\emptyset$ from $k\ge\tau_t(\mg)+2$, implying that $F\cap T_1=T_2\cap T_1$. Note that $F\in\mf$ by the maximality of $\mf$ and $\mg$.
Then it follows from  $T_1\in\mt_t(\mf)$ that $|T_2\cap T_1|=|F\cap T_1|\ge t$.
\end{proof}

\begin{proof}[{\bf Proof of Theorem \ref{EKR-perfect}}]
	Suppose that $\mf$ and $\mg$  are $t$-intersecting subfamilies of $\uu$ and $|\mf||\mg|$ takes the maximum value. Observe that
	\begin{equation}\label{PF-EKR}
		|\mf||\mg|\ge (g(c,k,t,t))^2.
	\end{equation}
	It is sufficient to show $\tau_t(\mf)=\tau_t(\mg)=t$, which together with Lemma \ref{cover-intersection} yields Theorem \ref{EKR-perfect}.
	
%	Suppose  $k=t+2$.  By  Theorem \ref{thm-t+2}, if $\max\{\tau_t(\mf),\tau_t(\mg)\}\ge t+1$, then $$|\mf||\mg|\le{t+2\c2}\left(\dfrac{1}{2}{2c\c c}-1\right)+1<\dfrac{1}{2}{2c\c c}{t+2\c2}<4t^2\left(\dfrac{1}{2}{2c\c c}\right)<\left(\dfrac{1}{2}{2c\c c}\right)^2.$$
%	This contradicts \eqref{PF-EKR}. Thus $\tau_t(\mf)=\tau_t(\mg)=t$.
	
	%Suppose $k\ge t+3$.
	By Lemma \ref{fangsuo1}, we have $|\mf||\mg|\le g(c,k,t,\tau_t(\mf))g(c,k,t,\tau_t(\mg))$.
		If $\max\{\tau_t(\mf),\tau_t(\mg)\}\ge t+1$, then by Lemma \ref{key-bidaxiao}, we have $|\mf||\mg|<(g(c,k,t,t))^2$, a contradiction to \eqref{PF-EKR}. So $\tau_t(\mf)=\tau_t(\mg)=t$, 
		as desired.
\end{proof}

\section{Proof of Theorem \ref{HM-perfect}}\label{T12}

In this section, we investigate maximal cross $t$-intersecting subfamilies $\mf$ and $\mg$ of $\uu$ with $|\mf||\mg|$ being maximum under the condition that $\min\{\tau_t(\mf),\tau_t(\mg)\}\ge t+1$.

Write
\begin{equation}\label{f0}
	f_0(c,k,t)=(k-t-1)\tt-{k-t-1\c2}\ttt.
\end{equation}
We remark here that  the sizes of families stated in Constructions \ref{n1} and \ref{n2} are only related to $c$, $k$ and $t$. Consequently, in the rest of this paper, let $f_1(c,k,t)$ and $f_2(c,k,t)$ denote that sizes of families stated in Constructions \ref{n1} and \ref{n2}, respectively.
These two constructions implies that $|\mf||\mg|\ge\max\{(f_1(c,k,t))^2,(f_2(c,k,t))^2\}$.
The following lemma shows that both $\tau_t(\mf)$ and $\tau_t(\mg)$ are equal to $t+1$.

\begin{lem}\label{>t+2}
	Let $c$, $k$ and $t$ be positive integers with $c\ge5$ and $k\ge t+3$. Suppose $\mf$ and $\mg$ are cross $t$-intersecting subfamilies of $\uu$ with $\tau_t(\mg)\ge\tau_t(\mf)\ge t+1$ and $|\mf||\mg|\ge(f_1(c,k,t))^2$.
	If $c\ge4\log_2t+7$ or $k\ge 2t+3$, then $\tau_t(\mf)=\tau_t(\mg)=t+1$.
\end{lem}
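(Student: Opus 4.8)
The plan is to argue by contradiction: assume $\tau_t(\mg)\ge t+2$ and derive $|\mf||\mg|<(f_1(c,k,t))^2$, contradicting the hypothesis. The natural tool is Lemma \ref{fangsuo1}, which gives
$$|\mf|\le g(c,k,t,\tau_t(\mf)),\qquad |\mg|\le\theta(c,k,\tau_t(\mf)){\tau_t(\mg)\brack t}\prod_{j=1}^{\tau_t(\mf)-t}\bigl(k-(t+j-1)\bigr),$$
so that
$$|\mf||\mg|\le g(c,k,t,\tau_t(\mf))\cdot g(c,k,t,\tau_t(\mg)).$$
Thus it suffices to show that $g(c,k,t,z)g(c,k,t,z')<(f_1(c,k,t))^2$ whenever $t+1\le z\le z'$ and $z'\ge t+2$; since $g$ is decreasing in its last argument (each extra block forces more intersection constraints, which I expect is one of the inequalities collected in Section \ref{COMPUTION}), the worst case is $z=t+1$, $z'=t+2$, i.e.\ it is enough to prove
$$g(c,k,t,t+1)\,g(c,k,t,t+2)<(f_1(c,k,t))^2.$$

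The key step is therefore to compare $f_1(c,k,t)$ with $g(c,k,t,t+1)$ and $g(c,k,t,t+2)$. First I would estimate $f_1(c,k,t)$ from below. By Construction \ref{n1}, $\mn_1(T,L,M)$ contains all $F\in\uu$ with $T\subseteq F$ and $|F\cap L|\ge t+1$; the count of $F\supseteq T$ with $|F\cap L|\ge t+1$ is, by inclusion–exclusion on the $k-t$ blocks of $L\setminus T$, roughly $(k-t)\theta(c,k,t+1)-\binom{k-t}{2}\theta(c,k,t+2)$, plus a positive contribution from the $\{F:T\not\subseteq F,\ |F\cap M|=k-2\}$ part. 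So $f_1(c,k,t)\ge (k-t)\theta(c,k,t+1)-\binom{k-t}{2}\theta(c,k,t+2)$, which up to lower-order terms in $k$ behaves like $(k-t)\theta(c,k,t+1)$. On the other hand, by definition
$$g(c,k,t,t+1)=\theta(c,k,t+1)\binom{t+1}{t}(k-t)=(t+1)(k-t)\,\theta(c,k,t+1),$$
$$g(c,k,t,t+2)=\theta(c,k,t+2)\binom{t+2}{t}(k-t)(k-t-1)=\tfrac{(t+2)(t+1)}{2}(k-t)(k-t-1)\,\theta(c,k,t+2).$$
So the inequality to establish, after cancelling the common factors $\theta(c,k,t+1)$ and a $(k-t)$, amounts to showing that
$$(t+1)\cdot\tfrac{(t+2)(t+1)}{2}(k-t-1)\,\theta(c,k,t+2)\ \ll\ \bigl((k-t)-\tfrac{k-t-1}{2}\cdot\tfrac{\theta(c,k,t+2)}{\theta(c,k,t+1)}\bigr)^2\,\theta(c,k,t+1),$$
i.e.\ that the ratio $\theta(c,k,t+2)/\theta(c,k,t+1)$ is small — of order $1/\binom{2c}{c}$ roughly — which forces a polynomial-in-$t$ bound to be dominated once either $c\ge 4\log_2 t+7$ (so $\binom{2c}{c}$ beats $\mathrm{poly}(t)$) or $k\ge 2t+3$ (so the $(k-t)^2$ factor beats $\mathrm{poly}(t)$ even if $c$ is small). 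This dichotomy in the two regimes is exactly what the two hypotheses are tailored for, and I expect the precise numerics to be packaged as a lemma (something like \texttt{Lemma~\ref{key-bidaxiao}} was used in the previous section) in Section \ref{COMPUTION}.

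The main obstacle I anticipate is not the structure of the argument but making the constants work simultaneously in both regimes with only the hypotheses $c\ge5$, $k\ge t+3$, and \emph{either} $c\ge4\log_2 t+7$ \emph{or} $k\ge2t+3$. In the small-$c$, large-$k$ regime one must be careful that the positive lower-order terms in the expansion of $f_1(c,k,t)$ (and the fact that $f_1$ also includes the $T\not\subseteq F$ part) are genuinely not needed, so that a clean bound $f_1(c,k,t)\ge (k-t)\theta(c,k,t+1)-\binom{k-t}{2}\theta(c,k,t+2)$ already suffices; and one must verify that $g$ is monotone in $z$ over the whole range $t+1\le z\le k$ so that reducing to $(z,z')=(t+1,t+2)$ is legitimate — this monotonicity is where the condition $k\ge t+3$ (ensuring there is room for at least $t+2$ blocks and the telescoping in Lemma \ref{fangsuo1} runs) enters. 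Once the reduction and the ratio estimate $\theta(c,k,t+2)/\theta(c,k,t+1)\le \bigl({(k-t-1)c\brack c}\bigr)^{-1}$ are in hand, the remaining inequality is a routine (if tedious) check split according to which of the two hypotheses holds, which I would defer to the computational section rather than grind through here.
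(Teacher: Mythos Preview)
Your overall strategy matches the paper's: contradict via Lemma~\ref{fangsuo1}, bound $|\mf||\mg|\le g(c,k,t,\tau_t(\mf))g(c,k,t,\tau_t(\mg))$, reduce to the smallest admissible pair, and then compare with $f_1$ (the paper routes through the auxiliary quantity $f_0$ and Lemma~\ref{f0f1}(i), but that is cosmetic). There is, however, a genuine gap in your reduction step.

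You assert that $g(c,k,t,z)$ is decreasing in $z$ over the whole range $t+1\le z\le k$, so that the worst case is $(z,z')=(t+1,t+2)$. This is false. Lemma~\ref{key-bidaxiao}(i) only gives $g(c,k,t,s+1)<g(c,k,t,s)$ for $s\le k-2$; at the top, $g(c,k,t,k)=\binom{k}{t}(k-t)!$ and $g(c,k,t,k-1)=\binom{k-1}{t}(k-t)!$, so $g(c,k,t,k)>g(c,k,t,k-1)$. Part~(ii) of that lemma only yields $g(c,k,t,k)<g(c,k,t,k-2)$. When $k\ge t+4$ this is enough, since then $k-2\ge t+2$ and (i) takes you the rest of the way. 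But when $k=t+3$ and $\tau_t(\mg)=k=t+3$, one computes $g(c,t+3,t,t+3)=(t+1)(t+2)(t+3)$ versus $g(c,t+3,t,t+2)=3(t+1)(t+2)$, so $g(c,t+3,t,t+3)>g(c,t+3,t,t+2)$ and your ``worst case'' is wrong. The paper therefore treats $(k,\tau_t(\mg))=(t+3,t+3)$ separately, bounding by $g(c,k,t,t+1)g(c,k,t,t+3)$ and invoking a dedicated inequality (Lemma~\ref{=t+1=t+1}(ii)) for that product. Your write-up needs this case split; the blanket monotonicity claim does not hold.

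A minor slip: $L\in U^{[ck]}_{c,k-1}$, so $|L\setminus T|=k-t-1$, not $k-t$; accordingly the paper's $f_0(c,k,t)=(k-t-1)\theta(c,k,t+1)-\binom{k-t-1}{2}\theta(c,k,t+2)$ has $k-t-1$ throughout.
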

\begin{proof}
	Suppose for contradiction that $\tau_t(\mg)\ge t+2$. By Lemma \ref{fangsuo1}, we have
	\begin{equation}\label{>t+2-1}
		|\mf||\mg|\le g(c,k,t,\tau_t(\mf))g(c,k,t,\tau_t(\mg)).
	\end{equation}

	Assume $(k,\tau_t(\mg))\neq(t+3,t+3)$. We have $k\ge t+4$ or $(k,\tau_t(\mg))=(t+3,t+2)$, and obtain
	$$|\mf||\mg|\le g(c,k,t,t+1)g(c,k,t,t+2)$$
	from Lemma \ref{key-bidaxiao} and \eqref{>t+2-1}.
	This together with Lemma \ref{=t+1=t+1} (\ro1) produces $|\mf||\mg|<(f_0(c,k,t))^2$.

	Assume $k=\tau_t(\mg)=t+3$. By Lemma \ref{key-bidaxiao} and \eqref{>t+2-1}, we have
	$$|\mf||\mg|\le g(c,k,t,t+1)g(c,k,t,t+3).$$
	It follows from Lemma \ref{=t+1=t+1} (\ro2)  that $|\mf||\mg|<(f_0(c,t+3,t))^2$.

	In summary, $|\mf||\mg|<(f_0(c,k,t))^2$. Then by Lemma \ref{f0f1} (\ro1), we get $|\mf||\mg|<(f_1(c,k,t))^2$.
	This contradicts the assumption that $|\mf||\mg|\ge (f_1(c,k,t))^2$. So $\tau_t(\mf)=\tau_t(\mg)=t+1$.
\end{proof}

By Lemmas \ref{cover-intersection} and \ref{>t+2}, we have $\tau_t(\mt_t(\mf)),\tau_t(\mt_t(\mg))\in\{t,t+1\}$.

\subsection{The case $(\tau_t(\mt_t(\mf)),\tau_t(\mt_t(\mg)))\neq(t+1,t+1)$}

\begin{lem}\label{cover}
	Let $c$, $k$ and $t$ be positive integers with $c\ge2$ and $k\ge t+3$. Suppose that $\mf\subseteq\uu$ with $\tau_t(\mf)=t+1$, and $A\in U^{[ck]}_{c,t}$ with $(\mt_t(\mf))_A\neq\emptyset$. Set $E=\bigcup_{S\in (\mt_t(\mf))_A}S$. Then there exists $m\in\{t+1,t+2,\dots,k-1\}$ such that the following hold.
	\begin{itemize}
		\item[\rm{(\ro1)}] $E\in U^{[ck]}_{c,m}$ and $(\mt_t(\mf))_A=\{S\in U^{[ck]}_{c,t+1}: A\subseteq S\subseteq E\}$.
		\item[\rm{(\ro2)}] $|F\cap E|=m-1$ for each $F\in\mf\bs\mf_A$.
	\end{itemize}
\end{lem}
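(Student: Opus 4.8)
The plan is to extract the structure of $E$ directly from the minimality $\tau_t(\mf)=t+1$, splitting the argument into three easy facts plus one small degenerate case to exclude. First I would record that $A$ is not a $t$-cover of $\mf$ (its size $t$ lies below $\tau_t(\mf)$), so $\mf\setminus\mf_A\ne\emptyset$, and that every $S\in(\mt_t(\mf))_A$ has the form $S=A\cup\{e\}$ for a single block $e$, since $|S|=t+1$ and $A\subseteq S$. The key computation is: for such an $S$ and any $F\in\mf\setminus\mf_A$, the block $e$ is disjoint from each block of $A$, so $|S\cap F|=|A\cap F|+|\{e\}\cap F|$; combined with $|S\cap F|\ge t$ and $|A\cap F|\le t-1$ this forces $|A\cap F|=t-1$ and $e\in F$.

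Next I would set $D=\{e : A\cup\{e\}\in(\mt_t(\mf))_A\}$, so that $E=A\cup D$, and fix some $F_0\in\mf\setminus\mf_A$. By the key computation $D\subseteq F_0$, so the blocks of $D$ are pairwise disjoint (being blocks of the partition $F_0$) and are disjoint from the blocks of $A$ (because $A\cup\{e\}\in U^{[ck]}_{c,t+1}$ for each $e\in D$); hence $E=A\sqcup D\in U^{[ck]}_{c,m}$ with $m:=t+|D|\ge t+1$. For the second half of $(\ro1)$ I would prove $(\mt_t(\mf))_A=\{S\in U^{[ck]}_{c,t+1}:A\subseteq S\subseteq E\}$: the inclusion ``$\subseteq$'' is the definition of $E$, and for ``$\supseteq$'' I would take $S=A\cup\{e\}$ with $e\in D$ and verify it is a $t$-cover of $\mf$ --- it meets every $F\in\mf_A$ in at least the $t$ blocks of $A$, and it meets every $F\in\mf\setminus\mf_A$ in $|A\cap F|+1=t$ blocks by the key computation --- so minimality of its size places it in $(\mt_t(\mf))_A$.

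The only point requiring a genuine (if short) argument is $m\le k-1$. Since $E$ is a family of pairwise disjoint $c$-subsets of $[ck]$ we have $m\le k$, so it suffices to exclude $m=k$. If $m=k$ then $E\in\uu$ and $A\subseteq E$; pick $F\in\mf\setminus\mf_A$. Then $F$ contains the $k-t$ blocks of $D$ together with the $t-1$ blocks of $A\cap F$, i.e.\ at least $k-1$ blocks of $E$; but two $c$-uniform partitions of $[ck]$ that share at least $k-1$ blocks must coincide (the remaining block of each is forced to be the complement of the union of the common ones), so $F=E\supseteq A$, contradicting $A\not\subseteq F$.

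Finally, $(\ro2)$ is immediate from the description of $E$: for $F\in\mf\setminus\mf_A$ we have $D\subseteq F$ and $|A\cap F|=t-1$, with the blocks of $A$ and those of $D$ forming disjoint collections, so $|F\cap E|=|A\cap F|+|D|=(t-1)+(m-t)=m-1$. I expect the only mildly delicate step to be isolating the correct object --- the union $D$ of the ``extra'' blocks of the minimum $t$-covers through $A$ --- and then ruling out the degenerate possibility $m=k$; everything else is bookkeeping around the identity $|S\cap F|=|A\cap F|+|\{e\}\cap F|$.
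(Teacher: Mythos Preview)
Your proof is correct and follows essentially the same route as the paper's: both hinge on the observation that for any $F\in\mf\setminus\mf_A$ and any $S=A\cup\{e\}\in(\mt_t(\mf))_A$ one has $|A\cap F|=t-1$ and $e\in F$, from which the structure of $E$ and the bound $m\le k-1$ follow exactly as you describe. One minor redundancy: having defined $D=\{e:A\cup\{e\}\in(\mt_t(\mf))_A\}$, the inclusion ``$\supseteq$'' in (\ro1) is immediate from that definition, so your separate verification that $A\cup\{e\}$ is a $t$-cover is unnecessary (though not incorrect).
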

\begin{proof}
	If $|(\mt_t(\mf))_A|=1$, then it is obvious that $m=t+1$, and (\ro1) and (\ro2) hold. Next assume that $|(\mt_t(\mf))_A|\ge2$.
	
	Let $S_1$ and $S_2$ be distinct members of $(\mt_t(\mf))_A$.
	Since $\tau_t(\mf)=t+1$, there exists $F\in\mf$ with $A=S_1\cap S_2\not\subseteq F$.
	This together with $|S_1\cap F|\ge t$ and $|S_2\cap F|\ge t$ yields $|A\cap F|=t-1$, $S_1\Delta S_2\subseteq F$ and $S_1\Delta S_2\in U^{[ck]}_{c,2}$.
	Hence $S_1\cup S_2\in U^{[ck]}_{c,t+2}$.
	We further conclude that $E\in U^{[ck]}_{c,m}$ for some $m\ge t+2$. 
	
	Since $|A\cap F|=t-1$ and $S\bs A\subseteq F$ for each $S\in(\mt_t(\mf))_A$, 
	we have $|F\cap E|=|A\cap F|+(|E|-t)=m-1$. Then (\ro2) holds.

	Observe that $m\le k$. If $m=k$, then $|F\cap E|=k-1$, which implies that $F=E$, a contradiction to the assumption that $T\not\subseteq F$. Thus $m\le k-1$. Now we conclude that $m\in\{t+2,t+3,\dots,k-1\}$ and (\ro1) follows.
	\end{proof}

\begin{lem}\label{B}
	Let $c$, $k$ and $t$ be positive integers with $c\ge2$ and $k\ge t+3$.
	Suppose $\mf$ and $\mg$ are cross $t$-intersecting subfamilies of $\uu$ with $\tau_t(\mf)=\tau_t(\mg)=t+1$.
	If $\mb=\{F\in\mf: P\not\subseteq F,\ \forall P\in\mt_t(\mg)\}$, then
	$$\dfrac{|\mb|}{\tt}\le\dfrac{3(t+1)(k-t-1)^3}{2{(k-t-1)c\c c}}.$$
\end{lem}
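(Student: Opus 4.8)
Fix a minimum $t$-cover $Q$ of $\mf$, so $Q\in U^{[ck]}_{c,t+1}$ and $|F\cap Q|\in\{t,t+1\}$ for every $F\in\mf$. Partition $\mb$ into $\mb^{\mathrm{in}}=\{F\in\mb:Q\subseteq F\}$ and the pieces $\mb_q=\{F\in\mb:F\cap Q=Q\setminus\{q\}\}$, one for each block $q$ of $Q$. Set $s=k-t-1\ge2$; since $\ttt=\frac{s}{\binom{sc}{c}}\tt$ (immediate from the formula for $\theta$), the assertion is equivalent to $|\mb|\le\frac32(t+1)s^2\,\ttt$. The plan is to bound $|\mb^{\mathrm{in}}|$ and the "good'' pieces by $s\,\ttt$, the "bad'' pieces by $(k-t)\,s\,\ttt$, and to show at most $t$ pieces are bad.

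Call $q$ \emph{good} if $Y:=Q\setminus\{q\}$ is contained in some member of $\mt_t(\mg)$, and \emph{bad} otherwise. For good $q$, apply Lemma~\ref{cover} to $\mg$ with $A=Y$ to obtain $E=E_Y$ with $m=|E|\ge t+1$ and $(\mt_t(\mg))_Y=\{S\in U^{[ck]}_{c,t+1}:Y\subseteq S\subseteq E\}$. For $F\in\mb_q$ we have $Y\subseteq F$ and, because $F$ contains no member of $\mt_t(\mg)$, no block of $E\setminus Y$, so $F\cap E=Y$. Picking $G_0\in\mg\setminus\mg_Y$ (nonempty since $\tau_t(\mg)=t+1>t$), Lemma~\ref{cover}(\ro2) shows $G_0$ omits exactly one block of $E$, necessarily a block $a_0\in Y$; hence $|F\cap G_0\cap E|=t-1$, and $|F\cap G_0|\ge t$ forces $F$ to share a block $d\notin E$ with $G_0$. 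Since $d$ avoids the blocks of $G_0$ in $E$ and avoids $a_0\in F$, it is disjoint from $E$, so $Y\cup\{d\}\in U^{[ck]}_{c,t+2}$ and $Y\cup\{d\}\subseteq F$, with $d$ among the at most $k-m\le s$ blocks of $G_0$ disjoint from $E$; therefore $|\mb_q|\le s\,\ttt$. For $\mb^{\mathrm{in}}$: if $Q\in\mt_t(\mg)$ then $\mb^{\mathrm{in}}=\emptyset$, and otherwise $Q$ is not a $t$-cover of $\mg$ (a $t$-cover of size $t+1$ would be a minimum one), so one application of Lemma~\ref{fangsuo} to $\mf$ with $S=Q$ and some $G_1\in\mg$, $|G_1\cap Q|<t$, gives $|\mb^{\mathrm{in}}|\le|\mf_Q|\le(k-t-1)|\mf_R|\le s\,\ttt$ for a suitable $R\in U^{[ck]}_{c,t+2}$.

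For bad $q$: from $|P\cap Q|\ge t$ and $Y\not\subseteq P$ one deduces $|P\cap Y|=t-1$, hence $q\in P$, for every $P\in\mt_t(\mg)$; so no $F$ with $q\notin F$ can contain a member of $\mt_t(\mg)$, giving $\mb_q=\{F\in\mf:Y\subseteq F,\ q\notin F\}$. With $\mf'=\{F\in\mf:q\notin F\}$ apply Lemma~\ref{fangsuo} to $\mf'$ twice: first with $S=Y$ (via some $G_1\in\mg$ with $|G_1\cap Y|<t$) to obtain $R_1\in U^{[ck]}_{c,t+1}$, $Y\subseteq R_1$, and $|\mb_q|=|\mf'_Y|\le(k-t)\,|\mf'_{R_1}|$; then with $S=R_1$, which is not a $t$-cover of $\mg$ (otherwise it would be a minimum $t$-cover containing $Y$, contradicting badness), to obtain $R_2\in U^{[ck]}_{c,t+2}$ with $|\mf'_{R_1}|\le(k-t-1)\,|\mf'_{R_2}|\le s\,\ttt$. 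Hence $|\mb_q|\le(k-t)\,s\,\ttt$.

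Finally, the number $b$ of bad indices satisfies $b\le t$: otherwise $Q\subseteq\bigcap\mt_t(\mg)$, but two distinct $(t+1)$-sets meet in at most $t$ blocks, so $\mt_t(\mg)$ would be a singleton $\{P\}$ with $Q=P\in\mt_t(\mg)$ — impossible once some $Q\setminus\{q\}$ lies in no member of $\mt_t(\mg)$. Summing, $|\mb|\le\big(1+(t+1-b)+b(k-t)\big)s\,\ttt=(t+2+bs)\,s\,\ttt\le(t+2+ts)\,s\,\ttt$, and since $s\ge2$ yields $2(t+2)\le s(t+3)$, hence $t+2+ts\le\frac32(t+1)s$, we get $|\mb|\le\frac32(t+1)s^2\,\ttt$, which is exactly the claimed bound after substituting $\ttt=\frac{s}{\binom{sc}{c}}\tt$. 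The main obstacle is the bad case: losing the clean structure of Lemma~\ref{cover} forces the two-step use of Lemma~\ref{fangsuo} and costs the extra factor $k-t$, and the whole estimate then hinges on the combinatorial fact $b\le t$ together with $s\ge2$, i.e. on the hypothesis $k\ge t+3$.
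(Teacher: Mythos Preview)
Your argument has two gaps that keep it from closing.

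\textbf{The good-case bound is wrong by a factor of $\binom{sc}{c}/s$.} You correctly show that for each $F\in\mb_q$ there is a block $d\in G_0$ disjoint from all of $E$ with $Y\cup\{d\}\subseteq F$. But $|Y|=|Q\setminus\{q\}|=t$, so $Y\cup\{d\}\in U^{[ck]}_{c,t+1}$, not $U^{[ck]}_{c,t+2}$; the number of $c$-uniform partitions extending a fixed $(t{+}1)$-family is $\tt$, not $\ttt$. Thus your argument as written gives only $|\mb_q|\le (k-m)\,\tt\le s\,\tt$, which is useless. One can repair the step by noting that $d\notin E$ forces $Y\cup\{d\}\notin\mt_t(\mg)$, hence $Y\cup\{d\}$ is not a $t$-cover of $\mg$, and one further application of Lemma~\ref{fangsuo} gives $|\mf_{Y\cup\{d\}}|\le(k-t-1)\,\ttt$; but then the good-case estimate becomes $|\mb_q|\le s^2\,\ttt$, not $s\,\ttt$, and your final sum turns into $(1+(t{+}1)s+b)\,s\,\ttt$ rather than $(t{+}2+bs)\,s\,\ttt$.

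\textbf{The inequality $b\le t$ relies on an unjustified cross-intersection.} The step ``from $|P\cap Q|\ge t$ and $Y\not\subseteq P$ one deduces $q\in P$'' uses $|P\cap Q|\ge t$ for $P\in\mt_t(\mg)$, $Q\in\mt_t(\mf)$. That is exactly the content of Lemma~\ref{cover-intersection}, whose hypothesis is that $\mf$ and $\mg$ are \emph{maximal}; the present lemma does not assume this. Without it nothing prevents $|P\cap Q|<t$ for every $P\in\mt_t(\mg)$, so all $t+1$ indices may be bad. Even with the repaired good-case bound above, taking $b=t+1$ gives $|\mb|\le\big((t{+}1)(s{+}1)+1\big)s\,\ttt$, which exceeds $\tfrac32(t{+}1)s^2\,\ttt$ precisely when $s=2$, i.e.\ when $k=t+3$.

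By contrast, the paper's proof needs neither Lemma~\ref{cover} nor Lemma~\ref{cover-intersection} and no good/bad dichotomy. It simply covers $\mb$ by the pieces $\mb_T$, $T\in\binom{S}{t}$ for $S\in\mt_t(\mf)$; passes to at most $k-t$ sets $\mb_H$ with $H=T\cup\{e\}$ via some $G\in\mg$ with $|G\cap T|<t$; and uses only the \emph{definition} of $\mb$ to see that $\mb_H\ne\emptyset$ forces $H\notin\mt_t(\mg)$, after which Lemma~\ref{fangsuo} gives $|\mb_H|\le(k-t-1)\,\ttt$ directly. Summing yields $|\mb|\le(t{+}1)(k-t)(k-t-1)\,\ttt$, and $2(k-t)\le 3(k-t-1)$ finishes.
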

\begin{proof}
	If $\mb=\emptyset$, then there is nothing to prove. Next suppose that $\mb\neq\emptyset$.
	Let $S\in\mt_t(\mf)$. We have
	\begin{equation}\label{B-1}
		\mb\subseteq\bigcup_{T\in{S\c t}}\mb_T.
			\end{equation}
	Choose $T\in{S\c t}$ with $\mb_T\neq\emptyset$.
	
	It follows from $\tau_t(\mg)=t+1$ that $|T\cap G|<t$ for some $G\in\mg$. 
	Since $|G\cap F|\ge t$ for each $F\in\mb_T$, we have
	\begin{equation}\label{B-2}
		\mb_T\subseteq\bigcup_{H=T\cup\{e\}\in U^{[ck]}_{c,t+1},\ e\in G\bs T}\mb_H.
			\end{equation}
    Observe that 	the number of blocks in $G\bs T$ which are disjoint with each block in $T$ is at most $k-t$.

    Let $H=T\cup\{e\}\in U^{[ck]}_{c,t+1}$ for some $e\in G\bs T$ with $\mb_H\neq\emptyset$.
    The definition of $\mb$ implies $H\not\in\mt_t(\mg)$. 
    By Lemma \ref{fangsuo}, there exists $R\in U^{[ck]}_{c,t+2}$ such that
    $$|\mb_H|\le(k-t-1)|\mb_R|\le(k-t-1)\ttt.$$
    This together with \eqref{B-1}, \eqref{B-2} and $2(k-t)\le3(k-t-1)$ produces
    	\begin{equation*}
    	\begin{aligned}
    		\dfrac{|\mb|}{\tt}\le\dfrac{(t+1)(k-t)(k-t-1)^2}{{(k-t-1)c\c c}}\le\dfrac{3(t+1)(k-t-1)^3}{2{(k-t-1)c\c c}},
    			\end{aligned}
    	\end{equation*}
    	as desired.
\end{proof}

%It is routine to check that
%$$f_2(c,k,t)=(t+2)\tt-(t+1)\ttt.$$

\begin{lem}\label{tt}
	Let $c$, $k$ and $t$ be positive integers with $c\ge6$ and $k\ge t+3$.
	Suppose $\mf$ and $\mg$ are maximal cross $t$-intersecting subfamilies of $\uu$ with $\tau_t(\mf)=\tau_t(\mg)=t+1$,  $(\tau_t(\mt_t(\mf)),\tau_t(\mt_t(\mg)))\neq(t+1,t+1)$ and $|\mf||\mg|\ge\max\{(f_1(c,k,t))^2,(f_2(c,k,t))^2\}$.
	If $c\ge4\log_2t+7$ or $k\ge2t+3$,  then $\tau_t(\mt_t(\mf))=\tau_t(\mt_t(\mg))=t$.
\end{lem}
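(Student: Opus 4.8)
The plan is to argue by contradiction. The hypotheses and conclusion are symmetric in $\mf$ and $\mg$, and by Lemmas~\ref{cover-intersection} and~\ref{>t+2} we already know $\tau_t(\mt_t(\mf)),\tau_t(\mt_t(\mg))\in\{t,t+1\}$; so if the conclusion fails we may take $\tau_t(\mt_t(\mf))=t$ and $\tau_t(\mt_t(\mg))=t+1$, and it suffices to derive $|\mf||\mg|<\max\{(f_1(c,k,t))^2,(f_2(c,k,t))^2\}$. Since $\tau_t(\mt_t(\mf))=t$ and the members of $\mt_t(\mf)$ have size $t+1$, there is $T\in\ut$ with $T\subseteq S$ for every $S\in\mt_t(\mf)$, so $(\mt_t(\mf))_T=\mt_t(\mf)$; Lemma~\ref{cover} with $A=T$ then gives some $m\in\{t+1,\dots,k-1\}$ and $E=\bigcup_{S\in\mt_t(\mf)}S\in U^{[ck]}_{c,m}$ with $\mt_t(\mf)=\{S\in U^{[ck]}_{c,t+1}:T\subseteq S\subseteq E\}$ (hence $|\mt_t(\mf)|=m-t$) and $|F\cap E|=m-1$ for every $F\in\mf\bs\mf_T$. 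I first show $m\le t+2$: by Lemma~\ref{cover-intersection} the families $\mt_t(\mf)$ and $\mt_t(\mg)$ are cross $t$-intersecting, and since $\tau_t(\mt_t(\mg))=t+1$ some $P_0\in\mt_t(\mg)$ does not contain $T$, so $|P_0\cap T|\le t-1$; for each block $e$ of $E\bs T$ we have $T\cup\{e\}\in\mt_t(\mf)$, hence $t\le|P_0\cap(T\cup\{e\})|=|P_0\cap T|+|P_0\cap\{e\}|$, which forces $|P_0\cap T|=t-1$ and $e\in P_0$ for all such $e$. As $P_0$ has only two blocks outside $T$, we get $m-t\le2$, i.e.\ $m\in\{t+1,t+2\}$ and $|\mt_t(\mf)|\le2$.

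Next I bound the two families. For $\mg$: with $\mb'=\{G\in\mg:S\not\subseteq G,\ \forall S\in\mt_t(\mf)\}$ we have $\mg\subseteq\mb'\cup\bigcup_{S\in\mt_t(\mf)}\mg_S$, and since $|\mg_S|\le\tt$ this gives $|\mg|\le(m-t)\tt+|\mb'|\le2\tt+|\mb'|$, where Lemma~\ref{B} (with the roles of $\mf$ and $\mg$ interchanged) bounds $|\mb'|$ by $\tt\cdot\frac{3(t+1)(k-t-1)^3}{2{(k-t-1)c\c c}}$. For $\mf=\mf_T\cup(\mf\bs\mf_T)$: since $\tau_t(\mg)=t+1$, the $t$-set $T$ is not a $t$-cover of $\mg$, so fix $G_0\in\mg$ with $|T\cap G_0|=r\le t-1$; each $F\in\mf_T$ satisfies $|F\cap G_0|\ge t$ and hence contains at least $t-r$ blocks of $G_0$ disjoint from the union of the blocks of $T$. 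A count of how many blocks of $G_0$ can avoid that union---the point being that when $r=t-1$ at least $t+1$ blocks of $G_0$ must meet it (two are needed to cover the one block of $T$ absent from $G_0$), while when $r\le t-2$ every $F\in\mf_T$ contains at least two of the avoiding blocks---bounds $|\mf_T|$ by $(k-t-1)\tt$. Moreover $\mf\bs\mf_T\ne\emptyset$ (else $T$ would be a $t$-cover of $\mf$), and for $F\in\mf\bs\mf_T$ the equality $|F\cap E|=m-1$ together with $|F\cap T|\le t-1$ shows that $F$ omits exactly one block of $E$, that block lies in $T$, and $E\bs T\subseteq F$; running the same covering argument on the $t$-set obtained by replacing that omitted block of $T$ by a block of $E\bs T$ when $m=t+1$, or a direct count when $m=t+2$, yields $|\mf\bs\mf_T|\le t(k-t-1)\tt$ for $m=t+1$ and $|\mf\bs\mf_T|\le t(\tt-\ttt)$ for $m=t+2$. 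Thus $|\mf|\le(t+1)(k-t-1)\tt$ if $m=t+1$, and $|\mf|\le(k-1)\tt$ if $m=t+2$.

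Combining these, in both cases $|\mf||\mg|\le C\,\tt^2(1+\delta)$ with $C=\max\{(t+1)(k-t-1),\,2(k-1)\}$ and $\delta=\frac{3(t+1)(k-t-1)^3}{2{(k-t-1)c\c c}}$. For $k\ge2t+3$ one has $C<(k-t-1)^2$, and I compare with $f_1(c,k,t)\ge f_0(c,k,t)=(k-t-1)\tt-{k-t-1\c2}\ttt$ (Lemma~\ref{f0f1}); for $t+3\le k\le2t+2$ one has $C<(t+2)^2$ (since $(t+1)(k-t-1)\le(t+1)^2$ and $2(k-1)\le4t+2$), and I compare with $f_2(c,k,t)=(t+2)\tt-(t+1)\ttt$. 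In each range the hypothesis ``$c\ge4\log_2t+7$ or $k\ge2t+3$'', together with $c\ge6$, is exactly what makes $\delta$ and $\ttt/\tt$ negligible against the gap between $C$ and $(k-t-1)^2$ (respectively $(t+2)^2$); the relevant quantitative inequalities are those collected in Section~\ref{COMPUTION}. This yields $|\mf||\mg|<\max\{(f_1(c,k,t))^2,(f_2(c,k,t))^2\}$, contradicting the assumption, and hence $\tau_t(\mt_t(\mf))=\tau_t(\mt_t(\mg))=t$.

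The main difficulty is the bound on $|\mf|$: the crude estimate $|\mf_T|\le(k-t)\tt$ (each $F\in\mf_T$ contains one of the at most $k-t$ blocks of $G_0$ disjoint from the union of the blocks of $T$) just fails to beat $(f_1)^2$ near $k=2t+3$, so one genuinely has to extract the sharper constant $k-t-1$ from the fine structure of $G_0$ relative to the blocks of $T$, and then pair it with the near-optimal bound $|\mg|\le2\tt+|\mb'|$ that the case $m\le t+2$ provides; the numeric $c\ge6$ checks are tightest at the smallest values $k\in\{t+3,t+4\}$.
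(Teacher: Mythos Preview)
Your argument is correct, but it takes a somewhat different route from the paper for bounding $|\mf|$. The paper bounds \emph{both} $|\mf|$ and $|\mg|$ via Lemma~\ref{B}: it first bounds $|\mt_t(\mg)|$ (by $(t+1)(k-t-1)$ when $|\mt_t(\mf)|=1$, and by $k-1$ when $|\mt_t(\mf)|=2$, using Lemmas~\ref{cover-intersection} and~\ref{cover}), and then applies Lemma~\ref{B} to get $|\mf|/\tt<|\mt_t(\mg)|+\delta$. This yields exactly the quantities $h_1(c,k,t)$ and $h_2(c,k,t)$ handled by Lemma~\ref{=t=t}\,(\ro1),(\ro2). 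You instead decompose $\mf=\mf_T\cup(\mf\setminus\mf_T)$ and bound each piece directly, never touching $|\mt_t(\mg)|$; your case split $m=t+1$ versus $m=t+2$ is the same as the paper's $|\mt_t(\mf)|=1$ versus $|\mt_t(\mf)|=2$, and your resulting bounds on $|\mf|$ are slightly sharper versions of the paper's (the $+\delta$ term is absent on the $|\mf|$ side), so Lemma~\ref{=t=t} still finishes the job.

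The trade-off is exactly the one you flag at the end: your route forces you to squeeze $|\mf_T|\le(k-t-1)\tt$ rather than the easy $(k-t)\tt$ from Lemma~\ref{fangsuo}, and this genuinely requires the pigeonhole observation that at least two blocks of $G_0$ must meet the missing block of $T$ (when $r=t-1$), plus a separate check when $r\le t-2$. The paper avoids this sharpening entirely by going through $|\mt_t(\mg)|$ instead. Both approaches are clean; the paper's is more modular (a single mechanism, Lemma~\ref{B}, applied symmetrically), while yours is more direct and gives a marginally tighter constant at the cost of that extra counting step.
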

\begin{proof}
	Recall that $\tau_t(\mt_t(\mf)),\tau_t(\mt_t(\mg))\in\{t,t+1\}$. 
	 W.l.o.g., suppose for contradiction that $\tau_t(\mt_t(\mf))=t$ and $\tau_t(\mt_t(\mg))=t+1$.
	
	 \medskip
	 \noindent{{\bf Case 1.} $|\mt_t(\mf)|=1$}
	 \medskip
	
	 In this case, it follows from Lemmas \ref{cover-intersection} and \ref{cover} that $|\mt_t(\mg)|\le(t+1)(k-t-1)$.
	 Therefore, by Lemma \ref{B}, we obtain
	\begin{equation*}
		\begin{aligned}
			\dfrac{|\mf|}{\tt}
		<(t+1)(k-t-1)+\dfrac{3(t+1)}{2(k-t-1)^{c-3}},\quad
			\dfrac{|\mg|}{\tt}<1+\dfrac{3(t+1)}{2(k-t-1)^{c-3}}.	
		\end{aligned}
		\end{equation*}
	These together with Lemmas  \ref{=t=t} (\ro1) and \ref{f0f1} (\ro1) produce $$|\mf||\mg|<\max\{(f_0(c,k,t))^2,(f_2(c,k,t))^2\}\le\max\{(f_1(c,k,t))^2,(f_2(c,k,t))^2\},$$ a contradiction.
	
	 \medskip
	\noindent{{\bf Case 2.} $|\mt_t(\mf)|\ge2$}
	\medskip
	
	Since $\tau_t(\mt_t(\mf))=t$, from Lemma \ref{cover-intersection}, there exist distinct $S_1,S_2\in\mt_t(\mf)$ with $|S_1\cap S_2|=t$.
	 By Lemma \ref{cover}, we have $S_1\cup S_2\in U^{[ck]}_{c,t+2}$. This together with $\tau_t(\mt_t(\mg))=t+1$ and Lemma \ref{cover-intersection} yields
	$$\mt_t(\mg)\bs(\mt_t(\mg))_{S_1\cap S_2},\mt_t(\mf)\subseteq{S_1\cup S_2\c t+1}.$$
	Notice that $|(\mt_t(\mg))_{S_1\cap S_2}|\le k-t-1$ from Lemma \ref{cover}.
 	Then
	$$|\mt_t(\mf)|\le2,\quad|\mt_t(\mg)|\le(k-t-1)+t=k-1,$$
 	and it follows from Lemma \ref{B} that
	\begin{equation*}
		\begin{aligned}
			\dfrac{|\mf|}{\tt}<(k-1)+\dfrac{3(t+1)}{2(k-t-1)^{c-3}},\quad
			\dfrac{|\mg|}{\tt}<2+\dfrac{3(t+1)}{2(k-t-1)^{c-3}}.	
		\end{aligned}
	\end{equation*}
 By Lemma \ref{=t=t} (\ro2) and Lemma \ref{f0f1} (\ro1), 	we further conclude that $$|\mf||\mg|<\max\{(f_0(c,k,t))^2,(f_2(c,k,t))^2\}\le\max\{(f_1(c,k,t))^2,(f_2(c,k,t))^2\},$$ a contradiction.
\end{proof}

\begin{lem}\label{ttt}
	Let $c$, $k$ and $t$ be positive integers with $c\ge6$ and $k\ge t+3$.
	Suppose $\mf$ and $\mg$ are maximal cross $t$-intersecting subfamilies of $\uu$ with $\tau_t(\mf)=\tau_t(\mg)=t+1$,   $\tau_t(\mt_t(\mf))=\tau_t(\mt_t(\mg))=t$ and $|\mf||\mg|\ge(f_1(c,k,t))^2$.
	If $c\ge4\log_2t+7$ or $k\ge2t+3$,  then $\mf=\mn_1(T,L,M)$ and $\mg=\mn_1(T,M,L)$
		for some $T\in U^{[ck]}_{c,t}$ and $L,M\in\uuu$ with $T\subseteq L\cap M$ and $L\cap M\ge t+\min\{t,2\}$. In particular, if $k=t+3$, then $L\neq M$.
\end{lem}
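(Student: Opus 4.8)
The plan is to pin down the combinatorial structure of $\mt_t(\mf)$ and $\mt_t(\mg)$ first, and only then read off the families $\mf$ and $\mg$. By Lemma \ref{cover-intersection} the collections $\mt_t(\mf)$ and $\mt_t(\mg)$ are cross $t$-intersecting families of $t$-covers of size $t+1$; since $\tau_t(\mt_t(\mf))=\tau_t(\mt_t(\mg))=t$, inside each of them there are two members meeting in exactly $t$ blocks, hence (by the $\Delta$-argument already used in Lemma \ref{cover}) a member of size $t+2$ containing them. I would start by fixing $S_1,S_2\in\mt_t(\mf)$ with $|S_1\cap S_2|=t$, writing $T=S_1\cap S_2$, so $L:=S_1\cup S_2\in U^{[ck]}_{c,t+2}$, and similarly a pair in $\mt_t(\mg)$ producing some $M\in U^{[ck]}_{c,t+2}$ with $T'\subseteq M$. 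Using Lemma \ref{cover-intersection} again (each $P\in\mt_t(\mg)$ must $t$-intersect both $S_1$ and $S_2$, equivalently meet $L$ in at least $t+1$ blocks, and symmetrically), I would argue that $T=T'$ and that, with $E_\mf:=\bigcup_{S\in(\mt_t(\mf))_T}S$ and $E_\mg:=\bigcup_{P\in(\mt_t(\mg))_T}P$ from Lemma \ref{cover}, one has $T\subseteq E_\mf\cap E_\mg$ with $E_\mf,E_\mg$ "large enough" — precisely $|E_\mf\cap E_\mg|\ge t+\min\{t,2\}$ — and that every cover in $\mt_t(\mf)$ is either $T$ together with one block of $E_\mf$ or lies in ${E_\mg\choose t+1}$, and vice versa. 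This is the heart of the argument and where I expect most of the work: translating the cross $t$-intersecting condition on the two cover-families into the statement "$\mt_t(\mf)=\{T\cup\{e\}:e\in E_\mf\setminus T\}$ up to the finitely many extra covers forced to sit inside $E_\mg$", and showing the latter contribute nothing in the extremal case.

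Once the covers are understood, I would bound $|\mf|$ and $|\mg|$ from above exactly as in Lemma \ref{tt}: split $\mf=\mf_T\cup(\mf\setminus\mf_T)$, note $\mf_T$ injects (via the cross-condition with the covers of $\mg$) into $\{F:T\subseteq F,\ |F\cap E_\mg|\ge t+1\}$, control $\mf\setminus\mf_T$ by Lemma \ref{B}, and symmetrically for $\mg$; comparing with the lower bound $|\mf||\mg|\ge(f_1(c,k,t))^2$ and invoking the numerical estimates of Section \ref{COMPUTION} (the lemmas of the shape \ref{=t=t}, \ref{f0f1}) forces $E_\mf=E_\mg=:$ the $(t+1)$-block set one actually wants, i.e. $L=M$ in the notation of Construction \ref{n1} collapses to the configuration "$T$ plus two distinguished disjoint extensions". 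Here I would be careful to track the case $|L\cap M|\ge t+\min\{t,2\}$: for $t\ge2$ the two size-$(t+2)$ sets already share $t+1$ blocks, while for $t=1$ the construction allows $|L\cap M|\ge 2$, which is exactly what the $\Delta$-argument delivers, so the hypothesis on $|L\cap M|$ in the conclusion is automatic.

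The final step is to show $\mf=\mn_1(T,L,M)$ and $\mg=\mn_1(T,M,L)$ on the nose rather than merely up to size. Given the cover structure, every $F\in\mf$ with $T\subseteq F$ must satisfy $|F\cap E_\mf|\ge t+1$ (since $F$ contains some cover $S\in\mt_t(\mf)$... ), i.e. $F\in\mn_1(T,L,M)$; every $F\in\mf$ with $T\not\subseteq F$ must $t$-intersect every $P\in\mt_t(\mg)$, and since these $P$'s fill up ${E_\mg\choose t+1}$ with $T\not\subseteq P$ possible, Lemma \ref{cover}(\ro2) forces $|F\cap E_\mg|=|E_\mg|-1$; combined with the size comparison this pins $E_\mg$ down to a $(k-2)$-element... no — rather it shows $F$ meets the relevant $(k-\tau)$-type set in $k-2$ blocks, matching the second clause of $\mn_1$. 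Conversely $\mn_1(T,L,M)\subseteq\mf$ by maximality of $\mf$. The symmetric statement gives $\mg$. I would close by recording that when $k=t+3$ the choice $L=M$ is impossible: then $\mn_1(T,M,M)$ would have $|\bigcap_{G}G|\ge t$ (its first clause alone already has intersection $\supseteq T$ of size $t$, but one checks the second clause is forced to be empty when $L=M$ in this small range), contradicting the hypothesis $|\bigcap_{G\in\mg}G|<t$; hence $L\neq M$. The main obstacle, to repeat, is the middle paragraph: extracting the precise shape of the two minimum-$t$-cover families from the cross-intersection condition and discarding the "stray" covers in the extremal configuration.
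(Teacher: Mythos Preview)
Your outline has the right high-level shape (Lemma~\ref{cover} to structure the covers, Lemma~\ref{B} for the leftover, numerical comparison with $f_1$), but several of the claimed deductions are wrong or missing, and as written the argument would not close.

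First, you define $L:=S_1\cup S_2$ and $M$ as size-$(t+2)$ sets, but in the conclusion of the lemma (and in Construction~\ref{n1}) $L,M\in\uuu$ have size $k-1$. In the paper's proof, $L$ and $M$ are the unions of \emph{all} members of $\mt_t(\mf)$ and $\mt_t(\mg)$ respectively, and the first substantial step is to prove $|L|=|M|=k-1$ via Lemmas~\ref{B}, \ref{=t=t}(\ro3) and \ref{f0f1}(\ro1). You never establish this, and without it the rest of the argument has no object to compare to Construction~\ref{n1}. Relatedly, your assertion ``forces $E_\mf=E_\mg$'' is backwards: what the numerics force is that each of $E_\mf$ and $E_\mg$ is as large as possible (size $k-1$), not that they coincide; indeed for $k=t+3$ they must differ.

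Second, your ``equivalently meet $L$ in at least $t+1$ blocks'' is false. If $P=T\cup\{e\}$ with $e$ disjoint from every block of $T$ but $e\notin S_1\cup S_2$, then $|P\cap S_1|=|P\cap S_2|=t$ while $|P\cap L|=t$. This is exactly why the paper splits into two cases according to whether some $S\in\mt_t(\mf)$ meets the $t$-cover $T'$ of $\mt_t(\mg)$ in $t-1$ or in $t$ blocks. Case~1 ($|S\cap T'|=t-1$) forces $k=t+3$ and requires a page of careful analysis of the partitions to reach a contradiction; you do not address this case at all, and the shortcut you propose does not bypass it.

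Finally, in your last paragraph the reasoning ``$F$ contains some cover $S\in\mt_t(\mf)$'' is not how $t$-covers work: $S\in\mt_t(\mf)$ means $|S\cap F|\ge t$ for every $F\in\mf$, not $S\subseteq F$. The correct route (once $T=T'$ and $|L|=|M|=k-1$ are in hand) is to take $F\in\mf_T$ and $G\in\mg\setminus\mg_T$, apply Lemma~\ref{cover}(\ro2) to get $|G\cap M|=k-2$ so that every block of $G\setminus M$ meets two blocks of $M$, and deduce from $|F\cap G|\ge t$ that $|F\cap M|\ge t+1$; together with the analogous statement for $\mf\setminus\mf_T$ via Lemma~\ref{cover}(\ro2) applied to $L$, this gives $\mf\subseteq\mn_1(T,L,M)$. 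The bound $|L\cap M|\ge t+\min\{t,2\}$ then comes from cross-intersecting the two ``second clauses'' of $\mn_1$, not from the $\Delta$-argument.
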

\begin{proof}
	Let $L$ and $M$ denote the unions of all members of $\mt_t(\mf)$ and $\mt_t(\mg)$, respectively. By Lemma \ref{cover}, we have $|L|,|M|\in\{t+1,t+2,\dots,k-1\}$.
	
	Suppose $(|L|,|M|)\neq(k-1,k-1)$. By Lemma \ref{B}, we have
	$$\dfrac{|\mf|}{\tt}<|M|-t+\dfrac{3(t+1)}{2(k-t-1)^{c-3}},\quad\dfrac{|\mg|}{\tt}<|L|-t+\dfrac{3(t+1)}{2(k-t-1)^{c-3}}.$$
	Then $(f_0(c,k,t))^2-|\mf||\mg|>0$ follows Lemma \ref{=t=t} (\ro3). 
	This together with Lemma \ref{f0f1} (\ro1) yields a contradiction to the assumption $|\mf||\mg|\ge(f_1(c,k,t))^2$. Hence $|L|=|M|=k-1$.
		
		Let $T,T'\in U^{[ck]}_{c,t}$ be $t$-covers of $\mt_t(\mf),\mt_t(\mg)$, respectively.
		Notice that $\mt_t(\mf)$ and $\mt_t(\mg)$ are cross $t$-intersecting by Lemma \ref{cover-intersection}.
		For $S\in\mt_t(\mf)$, we have $|S\cap T'|\ge t-1$, and $\mt_t(\mg)\subseteq{S\cup T'\choose t+1}$ if $|S\cap T'|= t-1$.

		\medskip
		\noindent{\bf Case 1.} $|S\cap T'|=t-1$ for some $S\in\mt_t(\mf)$
		\medskip
		
		By $k\ge t+3$, we have $|S\cup T'|=t+2\le k-1\le|S\cup T'|$, which implies $k=t+3$ and $M=S\cup T'\in U^{[ck]}_{c,t+2}$.
		Observe that $T\neq T'$. It follows from $|\mt_t(\mg)|=2$ and Lemma \ref{cover-intersection} that $|T\cap S'|=t-1$ for some  $S'\in\mt_t(\mg)$.
		Then $L=T\cup S'\subseteq S\cup T'=M$. We further obtain $L=M$. We claim that
		\begin{equation}\label{clt+1}
		\mf,\mg\subseteq\left\{F\in\uu: |F\cap L|\ge t+1\right\}.
			\end{equation}
	Since $t-2\le|T\cap T'|\le t-1$, w.l.o.g., let
	$$L=\{e_1,e_2,\dots,e_{t+2}\},\ T=\{e_1,\dots,e_t\},\ e_1\not\in T',\ \{e_3,\dots,e_{t+1}\}\subseteq T',\   e_{t+3}=[ck]\bs\bigcup_{i=1}^{t+2}e_i.$$
	Suppose for contradiction that $|F\cap L|=t$ for some $F\in\mf$. Then each member of $\mt_t(\mf)$ contains $F\cap L$. This together with $|L|=t+2$ yields $F\cap L=T$.

	Assume that there exists $G_1\in\mg_{T'}$ with $|G_1\cap L|\ge t+1$. If $T\not\subseteq G_1$, then we may assume that $e_1\not\in G$. Thus $G_1\cap L=\{e_2,\dots,e_{t+2}\}$, and each block in $G_1\bs L$ intersects $e_1$ and $e_{t+3}$ at the same time. 
	 Notice that  $e_{t+1},e_{t+2}\not\in F$. 
	 We further conclude $|F\cap G_1|\le t-1$, a contradiction. Thus $T\subseteq G_1$.

	Pick $G_2\in\mg\bs\mg_{T'}$. We have $|G_2\cap L|=t+1$ by Lemma \ref{cover}, and $e_1\in G_2$. If $T\not\subseteq G_2$, then $e_i\not\in G_2$ for some $i\in\{2,\dots,t\}$, implying that $G_2\cap L=L\bs\{e_i\}$ and $(G_2\bs L)\cap F=\emptyset$. This together with $e_{t+1},e_{t+2}\not\in F$ yields $|F\cap G_2|=t-1$, a contradiction. 
	Hence $T\subseteq G_2$.
	
	Since $\tau_t(\mg)=t+1$, there exists $G_3\in\mg_{T'}$ with $G_3\cap L=T'$. 
	Notice that $|T'\cap\{e_2,e_{t+2}\}|=1$. 
	If $e_{t+2}\in T'$, then $F\cap G_3\subseteq\{e_3,\dots,e_t,e_{t+3}\}$, a contradiction to the fact that $|F\cap G_3|\ge t$.
	Hence $e_2\in T'$. 
	By $|F\cap G_3|\ge t$ and $e_{t+1}\not\in F$, we conclude that $F$ contains exactly one element $e_0$ in ${e_{t+2}\cup e_{t+3}\c c}\bs\{e_{t+2}\}$ and $e_0\in F\cap G_3$. 
	Notice that $|T\cap T'|=t-1$. 
	Furthermore, $T\cup\{e_0\}$ is a $t$-cover of $\mg_{T'}$.

	In summary, $T\cup\{e\}\in\mt_t(\mg)$ for some $e\in{e_{t+2}\cup e_{t+3}\c c}\bs\{e_{t+2}\}$. This contradicts the fact that $M=\{e_1,e_2,\dots,e_{t+2}\}$. So \eqref{clt+1} holds. Then by the maximality of $\mf$ and $\mg$, we have
	$$\mf=\mg=\left\{F\in\uu: |F\cap L|\ge t+1\right\},\quad\mt_t(\mf)=\mt_t(\mg)={L\c t+1},$$
	a contradiction to the assumption that $\tau_t(\mt_t(\mf))=\tau_t(\mt_t(\mg))=t$.
	
		\medskip
	\noindent{\bf Case 2.} $|S\cap T'|=t$ for each $S\in\mt_t(\mf)$
	\medskip
	
	In this case, we have $T'=T$ by $|\mt_t(\mf)|=k-t-1\ge2$. Let $F\in\mf_T$ and $G\in\mg\bs\mg_T$.
	Note that $|G\cap M|=k-2$ by Lemma \ref{cover}, and each block in $G\bs M$ is not a block in $F$. Thus
	$$\mf_T\subseteq\{F\in\uu: T\subseteq F,\ |F\cap M|\ge t+1\},\quad\mf\subseteq\mn_1(T,L,M)$$
	Similarly, we have
	$$\mg_T\subseteq\{G\in\uu: T\subseteq G,\ |F\cap L|\ge t+1\},\quad\mg\subseteq\mn_1(T,M,L).$$
	Since $|\mf||\mg|\ge(f_1(c,k,t))^2$, we have $\mf=\mn_1(T,L,M)$ and $\mg=\mn_1(T,M,L)$. Then the families
	$\{F\in\uu: T\not\subseteq F,\ |F\cap L|=k-2\}$ and $\{G\in\uu: T\not\subseteq G,\ |G\cap M|=k-2\}$ are cross $t$-intersecting. We further conclude that $|L\cap M|\ge t+\min\{t,2\}$.	
	Observe that $\mn_1(T,M,M)=\mn_2(M)$ when $k=t+3$, and $\tau_t(\mt_t(\mn_2(M)))=t+1$. Then $k=t+3$ implies that $L\neq M$.
\end{proof}

\subsection{The case $(\tau_t(\mt_t(\mf)),\tau_t(\mt_t(\mg)))=(t+1,t+1)$}

%For $\ell<k$, the definition of subfamilies of $U_{c,\ell}^{[ck]}$ with (cross) $t$-intersection property is  similar to those on $\uu$. 
To deal with the situation $(\tau_t(\mt_t(\mf)),\tau_t(\mt_t(\mg)))=(t+1,t+1)$, we need the following two properties of cross $t$-intersecting subfamilies of $U_{c,t+1}^{[ck]}$.

\begin{lem}\label{RS-intersecting}
	Let $c$, $k$ and $t$ be positive integers with $c\ge2$ and $k\ge t+3$.
	Suppose $\mr$ and $\ms$ are cross $t$-intersecting subfamilies of $U^{[ck]}_{c,t+1}$ with $\tau_t(\mr)=\tau_t(\ms)=t+1$. Then $\mr$ is $t$-intersecting if and only if $\ms$ is $t$-intersecting. Moreover, if $\mr$ is $t$-intersecting , then $\mr,\ms\subseteq{Z\c t+1}$ for some $Z\in U^{[ck]}_{c,t+2}$.
\end{lem}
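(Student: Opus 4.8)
The plan is to reduce the whole statement to one structural claim and then verify that claim by elementary block manipulations. Since for $Z\in U^{[ck]}_{c,t+2}$ any two distinct members of ${Z\c t+1}$ share exactly $t$ blocks, the family ${Z\c t+1}$ is automatically $t$-intersecting. Hence it suffices to prove the following: \emph{if $\mr$ is $t$-intersecting, then $\mr,\ms\subseteq{Z\c t+1}$ for some $Z\in U^{[ck]}_{c,t+2}$.} This claim yields the ``moreover'' part, yields the implication ``$\mr$ $t$-intersecting $\Rightarrow$ $\ms$ $t$-intersecting'', and, applied with the roles of $\mr$ and $\ms$ swapped, also yields the reverse implication; so the equivalence follows as well. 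From now on I assume $\mr$ is $t$-intersecting.

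The first real step is to locate $Z$. Since $\tau_t(\mr)=t+1>t$, the family $\mr$ has no $t$-cover of size $t$, so $|\mr|\ge2$; fix distinct $R_1,R_2\in\mr$. As $\mr$ is $t$-intersecting and $|R_1|=|R_2|=t+1$, we get $|R_1\cap R_2|=t$, so I write $A=R_1\cap R_2\in U^{[ck]}_{c,t}$, $R_1=A\cup\{x\}$, $R_2=A\cup\{y\}$ with $x\neq y$. The crucial point --- and the one place where the two families genuinely interact --- is that $x$ and $y$ are disjoint $c$-sets, so that $Z:=A\cup\{x,y\}$ is a bona fide member of $U^{[ck]}_{c,t+2}$ (here $t+2\le k$ since $k\ge t+3$). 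To see this, invoke $\tau_t(\ms)=t+1>t$: the $t$-set $A$ is then not a $t$-cover of $\ms$, so there is $S_0\in\ms$ with $|S_0\cap A|<t$, and cross $t$-intersection of $S_0$ with $R_1$ and with $R_2$ forces $|S_0\cap A|=t-1$ and $x,y\in S_0$; since $S_0$ consists of pairwise disjoint blocks, $x\cap y=\emptyset$.

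For the containments I would use the following ``sunflower'' observation: if a $t$-intersecting family $\mh\subseteq U^{[ck]}_{c,t+1}$ contains three members $A\cup\{x\},A\cup\{y\},A\cup\{z\}$ with $|A|=t$ and $x,y,z$ pairwise distinct, then $A\subseteq R$ for every $R\in\mh$ --- because for such an $R$, $t$-intersection with the three sets gives $|R\cap A|\ge t-1$, and $|R\cap A|=t-1$ would put $x,y,z$ all in $R$, contradicting $|R|=t+1$ --- and hence $A$ is a $t$-cover of $\mh$ of size $t$, i.e. $\tau_t(\mh)\le t$. Applying this to $\mr$: for any $R\in\mr$, $t$-intersection with $R_1,R_2$ gives $|R\cap A|\ge t-1$; if $|R\cap A|=t-1$ then $x,y\in R$ and $R=(A\setminus\{a\})\cup\{x,y\}\subseteq Z$, while if $A\subseteq R$, say $R=A\cup\{w\}$, then $w\in\{x,y\}$ --- otherwise $R_1,R_2,R$ would be a sunflower with core $A$ inside $\mr$, forcing $\tau_t(\mr)\le t$ --- so $R\subseteq Z$ again; thus $\mr\subseteq{Z\c t+1}$. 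For $\ms$: since $\tau_t(\mr)>t$, some member of $\mr$ does not contain $A$, hence (being in ${Z\c t+1}$) equals $R_3:=(A\setminus\{a_0\})\cup\{x,y\}$ for some $a_0\in A$; now for any $S\in\ms$, cross $t$-intersection with $R_1,R_2$ gives $|S\cap A|\ge t-1$, and if $|S\cap A|=t-1$ then $S\subseteq Z$ as before, whereas if $S=A\cup\{w\}$ with $A\subseteq S$ then $|S\cap R_3|=(t-1)+|\{w\}\cap\{x,y\}|\ge t$ forces $w\in\{x,y\}$, so $S\subseteq Z$. This gives $\ms\subseteq{Z\c t+1}$ and completes the claim.

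The step I expect to be the main obstacle is the disjointness of $x$ and $y$: it is precisely what promotes the $(t+2)$-block set $A\cup\{x,y\}$ to an honest element of $U^{[ck]}_{c,t+2}$, rather than $t+2$ overlapping $c$-sets, and it is the only place the strict value $\tau_t(\ms)=t+1$ (as opposed to $\tau_t(\ms)\ge t$) is used. Everything else is a short case analysis resting on the sunflower observation and on $\tau_t(\mr)=t+1$.
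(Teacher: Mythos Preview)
Your proof is correct and follows essentially the same route as the paper's: fix distinct $R_1,R_2\in\mr$, set $Z=R_1\cup R_2$, show $Z\in U^{[ck]}_{c,t+2}$ by exhibiting a member (of $\ms$ in your case, of $\mr$ in the paper's) containing both extra blocks, and then do short case analyses for $\mr\subseteq{Z\c t+1}$ and $\ms\subseteq{Z\c t+1}$. The only notable variation is in the last step: the paper argues that if some $S\in\ms$ met $Z$ in exactly $t$ blocks then $S\cap Z$ would be a common $t$-subset of every $R\in\mr$ (using $\mr\subseteq{Z\c t+1}$), contradicting $\tau_t(\mr)=t+1$; your use of a single $R_3\in\mr$ with $A\not\subseteq R_3$ achieves the same conclusion and is equally valid.
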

\begin{proof}
	It is sufficient to consider the case that $\mr$ is $t$-intersecting.
	
	Since $\tau_t(\mr)=t+1$, we have $|\mr|\ge3$. Let $R_1, R_2\in\mr$ and $Z=R_1\cup R_2$.
	Observe that there exists a member of $\ms$ not containing $R_1\cap R_2$. 
	It follows from $R_1\cap R_2\not\subseteq R_3$ for some $R_3\in\mr$, and $|R_3\cap R_i|\ge t$ for $i\in\{1,2\}$
	 that $Z\in U^{[ck]}_{c,t+2}$ and $R_3\subseteq Z$.
	We further get $R\in\{R_1,R_2\}$ for each $R\in\mr$ containing $R_1\cap R_2$. Thus $\mr\subseteq{Z\c t+1}$.
	
	Note that each member of $\ms$ contains at least $t$ blocks in $Z$. If $|S\cap Z|=t$ for some $S\in\ms$, then each member of $\mr$ contains $S\cap Z$, a contradiction to the assumption that $\tau_t(\mr)=t+1$. So $\ms\subseteq{Z\c t+1}$, implying that $\ms$ is $t$-intersecting.
\end{proof}

\begin{lem}\label{not-intersecting}
	Let $c$, $k$ and $t$ be positive integers with $c\ge2$ and $k\ge t+3$.
	Suppose $\mr$ and $\ms$ are cross $t$-intersecting subfamilies of $U^{[ck]}_{c,t+1}$ with $\tau_t(\mr)=\tau_t(\ms)=t+1$. If $\mr$ is not $t$-intersecting, then one of the following holds.
	\begin{itemize}
		\item[{\rm(\ro1)}] $t=1$, and  $\mr=\{A_1,A_2,C\}$,  $\ms=\{B_1,B_2,C\}$ for some $A_1=\{e_1,e_2\}$, $A_2=\{e_3,e_4\}$, $B_1=\{e_1,e_3\}$, $B_2=\{e_2,e_4\}$, $C=\{e_1,e_4\}\in U_{c,2}^{[ck]}$.
		\item[{\rm(\ro2)}] $|\mr||\ms|<(t+2)^2$ and $|\mr|+|\ms|\le8$.
	\end{itemize}
\end{lem}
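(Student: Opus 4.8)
The plan is to reduce this to a finite problem about edges of the complete graph $K_4$. Since $\mr$ is not $t$-intersecting, Lemma~\ref{RS-intersecting} shows that $\ms$ is not $t$-intersecting either, and as $\tau_t(\mr)=\tau_t(\ms)=t+1$ neither family is empty or a singleton, so $|\mr|,|\ms|\ge 2$. Fix a ``bad pair'' $R_1,R_2\in\mr$ with $|R_1\cap R_2|\le t-1$, and set $D=R_1\cap R_2$. For any $S\in\ms$, cross $t$-intersection gives $|S\cap R_1|,|S\cap R_2|\ge t$; since $S=R_i$ would force $|S\cap R_{3-i}|=|R_1\cap R_2|<t$, we get $|S\cap R_1|=|S\cap R_2|=t$, so $S$ omits exactly one block from each of $R_1$ and $R_2$. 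From $|(S\cap R_1)\cup(S\cap R_2)|=2t-|S\cap D|\le|S|=t+1$ and $|S\cap D|\le|D|\le t-1$ we conclude $|D|=t-1$ and $D\subseteq S$. Writing $R_1\setminus D=\{e_1,e_2\}$ and $R_2\setminus D=\{e_3,e_4\}$ (four pairwise distinct $c$-subsets, each disjoint from every block of $D$), it follows that every $S\in\ms$ equals $D\cup\{e_i,e_j\}$ for some $i\in\{1,2\}$ and $j\in\{3,4\}$.

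Running the same argument with $\mr$ and $\ms$ interchanged, a bad pair $S_1,S_2\in\ms$ exists, and the displayed form of its members forces $\{S_1,S_2\}=\{D\cup\{e_1,e_3\},\,D\cup\{e_2,e_4\}\}$ after possibly relabelling $e_3\leftrightarrow e_4$ (which fixes $\{R_1,R_2\}$). Feeding this pair into the structural step shows that every $R\in\mr$ equals $D\cup\{e_a,e_b\}$ with $e_a\in\{e_1,e_3\}$ and $e_b\in\{e_2,e_4\}$. Identifying $D\cup\{e_i,e_j\}$ with the edge $ij$ of $K_4$ on vertex set $\{1,2,3,4\}$, we obtain $\mr\subseteq\{12,14,23,34\}$ with $12,34\in\mr$, and $\ms\subseteq\{13,14,23,24\}$ with $13,24\in\ms$. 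Since $|(D\cup\{e_i,e_j\})\cap(D\cup\{e_{i'},e_{j'}\})|\ge t$ exactly when the edges $ij$ and $i'j'$ share a vertex, cross $t$-intersection is equivalent to: \emph{not both $14\in\mr$ and $23\in\ms$, and not both $23\in\mr$ and $14\in\ms$}. One subtlety is that $14=D\cup\{e_1,e_4\}$ and $23=D\cup\{e_2,e_3\}$ belong to $U^{[ck]}_{c,t+1}$ only when $e_1\cap e_4=\emptyset$, resp.\ $e_2\cap e_3=\emptyset$, which need not hold; hence those two edges are available to $\mr$ and $\ms$ only under that additional disjointness.

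Now the problem is finite. From $|\mr|,|\ms|\le 4$ we get $|\mr|+|\ms|\le 8$ at once. If $|\mr|=4$ then $\mr=\{12,14,23,34\}$, and the two forbidden patterns force $\ms=\{13,24\}$, so $|\mr||\ms|=8$; the case $|\ms|=4$ is symmetric. If $|\mr|=|\ms|=3$, then $\mr=\{12,34,x\}$ with $x\in\{14,23\}$, and the relevant forbidden pattern together with $|\ms|=3$ forces $\ms=\{13,24,x\}$ with the same $x$, so $|\mr||\ms|=9$; relabelling $e_1\leftrightarrow e_2$ and $e_3\leftrightarrow e_4$ if $x=23$ (which fixes $\{R_1,R_2\}$ and swaps $S_1,S_2$) lets us assume $x=14$, and then $A_1=D\cup\{e_1,e_2\}$, $A_2=D\cup\{e_3,e_4\}$, $B_1=D\cup\{e_1,e_3\}$, $B_2=D\cup\{e_2,e_4\}$, $C=D\cup\{e_1,e_4\}$ give $\mr=\{A_1,A_2,C\}$ and $\ms=\{B_1,B_2,C\}$. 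In all other cases $\min\{|\mr|,|\ms|\}\le 2$, so $|\mr||\ms|\le 8$. Hence $|\mr||\ms|\le 9$ always, with equality only in the $3{+}3$ case just described. If $t=1$ and $|\mr|=|\ms|=3$, then $D=\emptyset$ and that configuration is exactly the one in Construction~\ref{n3}, i.e.\ conclusion~{\rm(\ro1)}. If we are not in this case, then either $t\ge 2$, whence $|\mr||\ms|\le 9<16\le(t+2)^2$, or $t=1$ and $|\mr|=|\ms|=3$ fails, whence $|\mr||\ms|\le 8<9=(t+2)^2$; in both situations $|\mr||\ms|<(t+2)^2$ and $|\mr|+|\ms|\le 8$, which is conclusion~{\rm(\ro2)}.

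I expect the main obstacle to be the reduction step: establishing the rigid form $D\cup\{e_i,e_j\}$ for every member of both families in full generality, and then tracking precisely which of the six edges of $K_4$ are realizable inside $U^{[ck]}_{c,t+1}$, so that the $3{+}3$ configuration is correctly identified with Construction~\ref{n3} rather than spuriously ruled out or mislabelled. Once the $K_4$ picture is in place, the remaining case check is mechanical.
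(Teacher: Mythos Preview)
Your proof is correct. The opening reduction coincides with the paper's equation~\eqref{not-1}: from a bad pair $R_1,R_2\in\mr$ you obtain $|R_1\cap R_2|=t-1$ and $R_1\cap R_2\subseteq S$ for every $S\in\ms$, which is exactly what the paper establishes first.

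Where you diverge is in the organisation that follows. The paper splits into three cases on $t$: for $t=1$ it does a direct analysis with a parameter $\psi\in\{2,3,4\}$ measuring how many of $e_1,e_2,e_3,e_4$ can be packed into a member of $U^{[ck]}_{c,r}$; for $t=2$ it strips off the common block $e_0$ to reduce to the $t=1$ case; and for $t\ge 3$ it simply bounds $|\mr|,|\ms|\le 4$ and uses $(t+2)^2\ge 25$. You instead carry the common part $D$ throughout and translate the whole problem, for every $t$ simultaneously, into a question about edges of $K_4$ on $\{1,2,3,4\}$ with $\mr\subseteq\{12,14,23,34\}$, $\ms\subseteq\{13,14,23,24\}$, and the single cross-intersection constraint ``not $(14,23)$ and not $(23,14)$''. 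Your observation that $14$ and $23$ are only available when $e_1\cap e_4=\emptyset$ (resp.\ $e_2\cap e_3=\emptyset$) exactly replaces the paper's $\psi$-analysis.

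What your approach buys is uniformity and brevity: one structural reduction and one short finite check cover all $t$, and the $3{+}3$ configuration drops out as the unique equality case $|\mr||\ms|=9$, automatically matching conclusion~{\rm(\ro1)} when $t=1$ (so $D=\emptyset$) and falling under conclusion~{\rm(\ro2)} when $t\ge 2$. The paper's route is more concrete in the $t=1$ case but then has to redo work for $t=2$ and invoke a separate crude bound for $t\ge 3$.
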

\begin{proof}
	By assumption, we may suppose that $A_1,A_2\in\mr$ with $|A_1\cap A_2|<t$. Pick $S\in\ms$.
	Since $\mr$ and $\ms$ are cross $t$-intersecting, we have $|A_1\cap S|=|A_2\cap S|=t$ and
	$$t+1=|S|\ge|A_1\cap S|+|A_2\cap S|-|A_1\cap A_2\cap S|\ge t+1.$$
	Consequently
	\begin{equation}\label{not-1}
	|A_1\cap A_2\cap S|=|A_1\cap A_2|=t-1,\quad A_1\cap A_2\subseteq S.
	\end{equation}
	
	\medskip
	\noindent{\bf Case 1.} $t=1$
	\medskip
	
	In this case, write
	$A_1=\{e_1,e_2\}$ and $A_2=\{e_3,e_4\}$.
	Notice that $\ms$ is also not $t$-intersecting from Lemma \ref{RS-intersecting}.
	By \eqref{not-1}, we may suppose that $B_1=\{e_1,e_3\},B_2=\{e_2,e_4\}\in\ms$.
	Observe that $\mr\subseteq{B_1\cup B_2\c2}={\{e_1,e_2,e_3,e_4\}\c2}$ and $\ms\subseteq{A_1\cup A_2\c2}={\{e_1,e_2,e_3,e_4\}\c2}$.
	Let $\psi$ denote the maximum value of $r$ such that $2^{\{e_1,e_2,e_3,e_4\}}\cap U^{[ck]}_{c,r}\neq\emptyset$, and $C=\{e_1,e_4\}$, $D=\{e_2,e_3\}$.
	Then $2\le\psi\le4$.
	
	\medskip
	\noindent{\bf Case 1.1.} $\psi=2$
	\medskip
	
	Since $\psi=2$, neither $e_1\cap e_4$ nor $e_2\cap e_3$ is non-empty. Thus $$\{A_1,A_2\}\subseteq\mr\subseteq\{A_1,A_2\},\quad\{B_1,B_2\}\subseteq\ms\subseteq\{B_1,B_2\},$$
	implying that $|\mr|=|\ms|=2$, and (\ro2) follows.
	
		\medskip
	\noindent{\bf Case 1.2.} $\psi=3$
	\medskip
	
	W.l.o.g., assume that $e_1\cap e_4=\emptyset$ and $e_2\cap e_3\neq\emptyset$. Then $C\in U^{[ck]}_{c,2}$.
	Since $\mr$ and $\ms$ are cross $1$-intersecting, we have
	$$\{A_1,A_2\}\subseteq\mr\subseteq\{A_1,A_2,C\},\quad\{B_1,B_2\}\subseteq\ms\subseteq\{B_1,B_2,C\}.$$
	If $\mr=\{A_1,A_2,C\}$ and $\ms=\{B_1,B_2,C\}$, then (\ro1) holds, and  (\ro2) holds otherwise.
	%This together with the maximality of $\mr$ and $\ms$ yields (\ro1).

		\medskip
	\noindent{\bf Case 1.3.} $\psi=4$
	\medskip
	
	In this case, both $C$ and $D$ are in $U^{[ck]}_{c,2}$. Then
	$$\{A_1,A_2\}\subseteq\mr\subseteq\{A_1,A_2,C,D\},\quad\{B_1,B_2\}\subseteq\mr\subseteq\{B_1,B_2,C,D\}.$$
	We have $|\mr|\in\{2,3,4\}$. 
	If $|\mr|=2$ or $|\mr|=4$, then $|\ms|\le 4$ or $|\ms|=2$, respectively, and (\ro2) holds. 
	Suppose $|\mr|=3$. W.l.o.g., assume that $C\in\mr$. Then $D\not\in\ms$.
	We further conclude that (\ro2) follows from $\ms=\{B_1,B_2\}$, and (\ro1) holds if $\ms=\{B_1,B_2,C\}$.

	\medskip
	\noindent{\bf Case 2.} $t=2$
	\medskip
	
	By \eqref{not-1}, let $\{e_0\}=A_1\cap A_2$, and $\mr'=\{A\bs\{e_0\}: A\in\mr\}$, $\ms'=\{B\bs\{e_0\}: B\in\ms\}$.
	We know each member of $\ms$ contains $e_0$. This together with \eqref{not-1} and the assumption that  $\ms$ is  not $t$-intersecting yields $e_0\in R$ for each $R\in\mr$. 
	Consequently $\mr',\ms'\subseteq U^{[ck]}_{c,2}$.
	Since $\mr$ and $\ms$ are cross $2$-intersecting, $\mr'$ and $\ms'$ are cross $1$-intersecting.
	From the discussion in Case 1, we have $|\mr'||\ms'|\le9$ and $|\mr'|+|\ms'|\le6$.
	These together with $|\mr|=|\mr'|$ and $|\ms|=|\ms'|$ yield (\ro2).

	\medskip
	\noindent{\bf Case 3.} $t\ge3$
	\medskip

	From \eqref{not-1}, we get $|\ms|\le4$.
	By Lemma \ref{RS-intersecting}, $\ms$ is also not $t$-intersecting. We further conclude that $|\mr|\le4$, and (\ro2) follows. %This finishes the proof of Lemma \ref{not-intersecting}.
	\end{proof}

\begin{lem}\label{tttt}
	Let $c$, $k$ and $t$ be positive integers with $c\ge6$ and $k\ge t+3$.
	Suppose $\mf$ and $\mg$ are maximal cross $t$-intersecting subfamilies of $\uu$ with $\tau_t(\mf)=\tau_t(\mg)=t+1$ and  $\tau_t(\mt_t(\mf))=\tau_t(\mt_t(\mg))=t+1$.
	If $c\ge4\log_2t+7$ or $k\ge2t+3$, and $|\mf||\mg|\ge(f_2(c,k,t))^2$, then one of the following hold.
	\begin{itemize}
		\item[{\rm(\ro1)}] $\mf=\mg=\mn_2(Z)$ for some $Z\in U^{[ck]}_{c,t+2}$.
		\item[{\rm(\ro2)}] $t=1$, and $\mf=\mn_3(A_1,A_2,C)$ and $\mg=\mn_3(B_1,B_2,C)$, where  $A_1=\{e_1,e_2\},A_2=\{e_3,e_4\},B_1=\{e_1,e_3\},B_2=\{e_2,e_4\},C=\{e_1,e_4\}\in U_{c,2}^{[ck]}$.
	\end{itemize}
\end{lem}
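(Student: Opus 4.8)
The plan is to split according to whether the family $\mt_t(\mf)$ is $t$-intersecting. By Lemma~\ref{cover-intersection}, applicable since $\max\{\tau_t(\mf),\tau_t(\mg)\}=t+1\le k-2$, the families $\mt_t(\mf)$ and $\mt_t(\mg)$ are cross $t$-intersecting subfamilies of $U^{[ck]}_{c,t+1}$, and by hypothesis both have $t$-covering number $t+1$; so Lemmas~\ref{RS-intersecting} and~\ref{not-intersecting} apply with $\mr=\mt_t(\mf)$, $\ms=\mt_t(\mg)$, and $\mt_t(\mf)$ is $t$-intersecting iff $\mt_t(\mg)$ is. Suppose first $\mt_t(\mf)$ is $t$-intersecting. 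By Lemma~\ref{RS-intersecting} there is $Z\in U^{[ck]}_{c,t+2}$ with $\mt_t(\mf),\mt_t(\mg)\subseteq\binom{Z}{t+1}$. Each member of $\binom{Z}{t+1}$ is $Z$ with one block deleted, so $\tau_t(\mt_t(\mf))=t+1$ forces $\mt_t(\mf)$ to contain at least three members of $\binom{Z}{t+1}$, whence $\bigl|\bigcap_{S\in\mt_t(\mf)}S\bigr|\le t-1$. I would then show $\mf\subseteq\mn_2(Z)$: if some $F\in\mf$ had $|F\cap Z|\le t$, then for every $S\in\mt_t(\mf)$, a $t$-cover of $\mf$ contained in $Z$, we would get $t\le|F\cap S|\le|F\cap Z|\le t$, hence $F\cap Z=F\cap S\subseteq S$; intersecting over all such $S$ gives $F\cap Z\subseteq\bigcap_{S\in\mt_t(\mf)}S$, a set of at most $t-1<t$ blocks, a contradiction. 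Hence $\mf\subseteq\mn_2(Z)$, and symmetrically $\mg\subseteq\mn_2(Z)$; since $\mn_2(Z)$ is $t$-intersecting, the maximality of the pair $(\mf,\mg)$ yields $\mf=\mg=\mn_2(Z)$, which is conclusion~(i).

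Now suppose $\mt_t(\mf)$ is not $t$-intersecting, and apply Lemma~\ref{not-intersecting}. If its alternative~(ii) holds, I would contradict $|\mf||\mg|\ge(f_2(c,k,t))^2$ by a counting argument. Set $\varepsilon=\dfrac{3(t+1)(k-t-1)^3}{2\binom{(k-t-1)c}{c}}$ and $\mb=\{F\in\mf:P\not\subseteq F\text{ for all }P\in\mt_t(\mg)\}$. Then $\mf\setminus\mb=\bigcup_{P\in\mt_t(\mg)}\mf_P$, where $|\mf_P|\le\tt$ for each $P\in\mt_t(\mg)\subseteq U^{[ck]}_{c,t+1}$, while Lemma~\ref{B} gives $|\mb|\le\varepsilon\,\tt$; hence $|\mf|\le(|\mt_t(\mg)|+\varepsilon)\tt$, and symmetrically $|\mg|\le(|\mt_t(\mf)|+\varepsilon)\tt$, so
\[
|\mf||\mg|\le\bigl(|\mt_t(\mf)|\,|\mt_t(\mg)|+\varepsilon(|\mt_t(\mf)|+|\mt_t(\mg)|)+\varepsilon^2\bigr)\tt^2 .
\]
Using $|\mt_t(\mf)|+|\mt_t(\mg)|\le8$, the bound $|\mt_t(\mf)|\,|\mt_t(\mg)|<(t+2)^2$ from Lemma~\ref{not-intersecting}(ii), and the estimates of Section~\ref{COMPUTION} bounding $\varepsilon$ uniformly in $c,k,t$, the right-hand side is strictly less than $(f_2(c,k,t))^2$ (where $f_2(c,k,t)=(t+2)\tt-(t+1)\ttt$), a contradiction. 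Therefore alternative~(i) of Lemma~\ref{not-intersecting} must hold: $t=1$ and, for some blocks $e_1,e_2,e_3,e_4$, $\mt_1(\mf)=\{A_1,A_2,C\}$ and $\mt_1(\mg)=\{B_1,B_2,C\}$ with $A_1=\{e_1,e_2\}$, $A_2=\{e_3,e_4\}$, $B_1=\{e_1,e_3\}$, $B_2=\{e_2,e_4\}$, $C=\{e_1,e_4\}$ all in $U^{[ck]}_{c,2}$.

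It then remains to pin down $\mf$ and $\mg$. Every $F\in\mf$ meets each of $A_1,A_2,C$ in a block; distinguishing whether $e_1\in F$ shows that $F$ must \emph{contain} one of $B_1,B_2,C$ (if $e_1\in F$, then $e_4\in F$ gives $C\subseteq F$ while $e_4\notin F$ forces $e_3\in F$, so $B_1\subseteq F$; if $e_1\notin F$, then $e_2,e_4\in F$, so $B_2\subseteq F$), i.e.\ $\mf\subseteq\mn_3(B_1,B_2,C)$, and symmetrically $\mg\subseteq\mn_3(A_1,A_2,C)$. Since $\mn_3(A_1,A_2,C)$ and $\mn_3(B_1,B_2,C)$ are cross $1$-intersecting (Construction~\ref{n3}), maximality of $(\mf,\mg)$ forces $\mf=\mn_3(B_1,B_2,C)$ and $\mg=\mn_3(A_1,A_2,C)$; relabelling the blocks via the transposition $e_2\leftrightarrow e_3$, which fixes $C$ and interchanges $A_i$ with $B_i$, recasts this as conclusion~(ii). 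The only genuinely delicate point is the numerical estimate in the non-$t$-intersecting case under alternative~(ii) of Lemma~\ref{not-intersecting}: one must make the error term $\varepsilon$ (equivalently $|\mb|$) small enough uniformly in $c,k,t$ and combine it with the forced smallness of $|\mt_t(\mf)|\,|\mt_t(\mg)|$ (at most $8$ when $t=1$, and at most $16$ in general since $|\mt_t(\mf)|+|\mt_t(\mg)|\le8$) to undercut $(f_2(c,k,t))^2$; this is exactly where the hypothesis ``$c\ge4\log_2t+7$ or $k\ge2t+3$'' and the auxiliary inequalities of Section~\ref{COMPUTION} are used. Everything else reduces to direct applications of Lemmas~\ref{cover-intersection},~\ref{RS-intersecting},~\ref{not-intersecting} and~\ref{B} together with short finite checks.
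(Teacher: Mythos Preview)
Your proposal is correct and follows essentially the same two-case split as the paper's proof: Case~1 via Lemma~\ref{RS-intersecting} (your containment argument for $\mf\subseteq\mn_2(Z)$ is a slightly more explicit version of the paper's one-line ``$\tau_t(\mt_t(\mf))=t+1\Rightarrow|F\cap Z|\ge t+1$''), and Case~2 via Lemma~\ref{not-intersecting}, where your product bound $(|\mt_t(\mf)||\mt_t(\mg)|+8\varepsilon+\varepsilon^2)\tt^2$ is exactly the quantity $h_4(c,k,t)$ of Section~\ref{COMPUTION}, handled there by Lemma~\ref{=t=t}(iv). The only cosmetic difference is a swap in labelling ($\mt_t(\mf)$ versus $\mt_t(\mg)$) in the $t=1$ subcase, which you correctly repair by the $e_2\leftrightarrow e_3$ relabelling; the paper instead assigns $\mt_t(\mg)=\{A_1,A_2,C\}$ from the start and argues by contradiction that every $F\in\mf$ contains some member of $\mt_t(\mg)$.
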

\begin{proof}
	
	We divide our proof into two cases.
	
	\medskip
	\noindent{\bf Case 1.} $\mt_t(\mf)$ is $t$-intersecting.
	\medskip
	
		By Lemma \ref{RS-intersecting}, we have $\mt_t(\mf),\mt_t(\mg)\subseteq{Z\c t+1}$ for some $Z\in U^{[ck]}_{c,t+2}$.
	It follows from $\tau_t(\mt_t(\mf))=t+1$ that $|F\cap Z|\ge t+1$ for each $F\in\mf$, i.e., $\mf\subseteq\mn_2(Z)$.
	Similarly, we also have $\mg\subseteq\mn_2(Z)$.
	Since $\mn_2(Z)$ is $t$-intersecting, 
	by the maximality of $\mf$ and $\mg$,  we conclude that $\mf=\mg=\mn_2(Z)$. Then (\ro1) holds.
	
	\medskip
	\noindent{\bf Case 2.} $\mt_t(\mf)$ is not $t$-intersecting.
	\medskip

	Suppose $|\mt_t(\mf)||\mt_t(\mg)|<(t+2)^2$ and $|\mt_t(\mf)|+|\mt_t(\mg)|\le8$. By Lemma \ref{B}, we have
	$$\dfrac{|\mf||\mg|}{(\tt)^2}\le(t+2)^2-1+\dfrac{12(t+1)(k-t-1)^3}{{(k-t-1)c\c c}}+\dfrac{9(t+1)^2}{4(k-t-1)^{2c-6}}.$$
Then it follows from Lemma \ref{=t+1=t+1} (\ro4) that $|\mf||\mg|<(f_2(c,k,t))^2$, a contradiction. 
Hence by Lemma \ref{not-intersecting}, we have $t=1$ and $\mt_t(\mg)=\{A_1,A_2,C\}$, $\mt_t(\mf)=\{B_1,B_2,C\}$ for some $A_1=\{e_1,e_2\},A_2=\{e_3,e_4\},B_1=\{e_1,e_3\},B_2=\{e_2,e_4\},C=\{e_1,e_4\}\in U_{c,2}^{[ck]}$. 
To get (\ro2), it is sufficient to show that each member of $\mf$ contains at least one element of $\mt_t(\mg)$.

Suppose for contradiction that $F\in\mf$ contains no member of $\mt_t(\mg)$.
Then $F$ contains at most two elements of $\{e_1,e_2,e_3,e_4\}$. On the other hand, by $\mt_t(\mf)=\{B_1,B_2,C\}$, we have $|F\cap\{e_1,e_2,e_3,e_4\}|=2$. Furthermore, $|F\cap\{e_1,e_4\}|=1$.
If $e_1\in F$, then from $B_2\cap F\neq\emptyset$, we obtain $e_2\in F$, a contradiction to $A_1\not\subseteq F$.
If $e_4\in F$, then by $B_1\cap F\neq\emptyset$, we have $e_3\in F$, a contradiction to $A_2\not\subseteq F$.
This finishes our proof.
\end{proof}

	\begin{proof}[Proof of Theorem \ref{HM-perfect}]
		Let $\mf$ and $\mg$ be cross $t$-intersecting subfamilies of $\uu$ such that both $|\bigcap_{F\in\mf}F|$ and $|\bigcap_{G\in\mg}G|$ are less than $t$. Observe that $\min\{\tau_t(\mf),\tau_t(\mg)\}\ge t+1$.
		Suppose $|\mf||\mg|$ takes the maximum value. Then $\mf$ and $\mg$ are maximal.
		By Lemmas \ref{>t+2} \ref{tt}, \ref{ttt} and \ref{tttt}, we conclude that one of Theorem \ref{HM-perfect} (\ro1), (\ro2) or (\ro3) holds. It is routine to check that the family stated in Construction \ref{n3} has size $f_2(c,k,1)$. This together with Lemma \ref{f0f1} finishes our proof.	
	\end{proof}

		\section{Proof of Theorem \ref{SUM}}\label{T13}

		In this section, 
		by constructing an auxiliary bipartite graph and determining all its fragments, 
		we characterize cross $t$-intersecting subfamilies of $\uu$ with maximum sum of their sizes.

	Put $\mx=\my=\uu$. A bipartite graph $G:=G(\mx,\my)$ is defined as follows: for $A\in\mx$ and $B\in\my$, $AB$ is an edge if and only if $|A\cap B|<t$. Observe that $G$ is not complete. 
	It is routine to check that the symmetric group $\Gamma:=S_{ck}$ acts transitively on $\mx$ and $\my$, respectively, in a natural way, and preserves the adjacency relation of $G$. 
	The main result in \cite{SN} shows that the stabilizer $\Gamma_F\cong S_c\wr S_k$ of each vertex $F$ is a maximal subgroup of  $\Gamma$.  
	Then by \cite[Theorem 1.12]{STAB}, the action of $\Gamma$ is \emph{primitive}, i.e., 
	$\Gamma$ preserves no non-trivial partition of $\mx$. 
	A subset $U$ of $\mx$ is said to be \emph{semi-imprimitive} if $1<|U|<|\mx|$ and $|\sigma(U)\cap U|\in\{0,1,|U|\}$ for each $\sigma\in\Gamma$.

	For a subset $\mw$ of the vertices set of $G$, let $N(\mw)$ denote  the set of all vertices $A$ such that $AB$ is an edge of $G$ for some $B\in\mw$. Moreover, if $\mw\cap N(\mw)=\emptyset$, then we say $\mw$ is an \emph{independent set} of $G$, and it is \emph{non-trivial} if $\mw\not\subseteq\mx$ and $\mw\not\subseteq\my$. 
%	Observe that a pair of cross $t$-intersecting families corresponds to a non-trivial independent set in $G$. 

	A \emph{fragment} in $\mx$ is a set $\ma\subseteq\mx$ with 
	$$N(\ma)\neq\my,\quad |N(\ma)|-|\ma|=\min\{|N(\mb)|-|\mb|: \mb\subseteq\mx,\ N(\mb)\neq\my\}.$$ 
		We  can also define the fragments in $\my$. 
		Suppose that $\mf\subsetneq\mx$ and $\mg\subsetneq\my$ are cross $t$-intersecting. 
		They correspond to a non-trivial independent set of $G$ and 
		$$|\mf|+|\mg|\le|\my|-|N(\mf)|+|\mf|.$$
		Consequently, if $|\mf|+|\mg|$ is maximum, then $\mf$ is a fragment in $\mx$.

	  By \cite[Theorem 1.1]{WZ}, the size of each non-trivial independent set of $G$ is at most $|\mx|-|N\{F\}|+1$, where $F\in\mx$, and each fragment in $\mx$ has size $1$ or $|\mx|-|N(\{F\})|$ unless there is a semi-imprimitive fragment in $\mx$ or $\my$. 
	We say a fragment in $\mx$ with size  $1$ or $|\mx|$ is \emph{trivial}, and \emph{non-trivial} otherwise. 
	To prove Theorem \ref{SUM}, it is sufficient to show there is no non-trivial fragment in $\mx$.

	Suppose for contradiction that $\ms\subseteq\mx$ is a minimum sized non-trivial fragment. 
	By \cite[Lemma 2.2]{WZ}, $\ms$ is semi-imprimitive.
	
	\begin{cl}\label{c1}
		There exists no fragment in $\mx$ with size $2$.
	\end{cl}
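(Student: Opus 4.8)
The goal is to rule out a non-trivial fragment $\ms$ of size $2$ in $\mx$, knowing (from the setup just before the claim) that any minimum-sized non-trivial fragment is semi-imprimitive. So suppose $\ms=\{F_1,F_2\}$ with $F_1\neq F_2$. The strategy is to extract strong structural constraints on the pair $(F_1,F_2)$ from semi-imprimitivity, then directly estimate $|N(\ms)|-|\ms|$ and compare it against the value achieved by a trivial fragment of size $1$, namely $|N(\{F\})|-1=|\{A\in\uu:|A\cap F|<t\}|-1$. Since $\ms$ is a fragment, $|N(\ms)|-|\ms|\le|N(\{F\})|-1$; showing the reverse strict inequality $|N(\ms)|-2>|N(\{F\})|-1$ for every pair would give the contradiction.

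First I would unpack semi-imprimitivity. Since $\Gamma=S_{ck}$ acts transitively and $|\sigma(\ms)\cap\ms|\in\{0,1,2\}$ for all $\sigma$, the pointwise/ setwise stabilizer structure of $\{F_1,F_2\}$ is highly restricted: the orbit of the unordered pair under $\Gamma$ is a single "association-scheme class", i.e. the pair $(F_1,F_2)$ is determined up to $\Gamma$-equivalence by the isomorphism type of how the two partitions overlap, and semi-imprimitivity forces that for any $\sigma$, $\sigma$ cannot send $F_1$ to $F_1$ and $F_2$ to some third partition. Concretely I expect this to pin down $F_1$ and $F_2$ to differ in a very controlled way — e.g. $|F_1\cap F_2|=k-2$ and the two "swapped" pairs of blocks are related by a transposition-like move — because if the symmetric difference were larger there would be enough room inside $\operatorname{Sym}([ck])$ to fix $F_1$ and move $F_2$ to a different partition, violating the $|\sigma(\ms)\cap\ms|\in\{0,1,2\}$ condition. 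This is essentially the same style of argument as \cite[Lemma 2.2]{WZ} and its use there; I would invoke the semi-imprimitivity machinery already cited and then do the small combinatorial case analysis on the overlap type of $(F_1,F_2)$.

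Next, for each surviving overlap type of the pair, I would compute (or bound below) $|N(\{F_1,F_2\})|=|N(\{F_1\})\cup N(\{F_2\})|=2|N(\{F\})|-|N(\{F_1\})\cap N(\{F_2\})|$, where $N(\{F_i\})=\{A\in\uu:|A\cap F_i|<t\}$. The key quantity is $|N(\{F_1\})\cap N(\{F_2\})|$, the number of partitions sharing fewer than $t$ blocks with each of $F_1,F_2$; since $F_1$ and $F_2$ are "close", most partitions far from $F_1$ are also far from $F_2$, so this intersection is a large fraction of $|N(\{F\})|$ but strictly smaller — and one needs $2|N(\{F\})|-|N(\{F_1\})\cap N(\{F_2\})|-2>|N(\{F\})|-1$, i.e. $|N(\{F\})|-|N(\{F_1\})\cap N(\{F_2\})|>1$, i.e. there exists at least two partitions avoiding $F_1$ at level $<t$ but meeting $F_2$ in $\ge t$ blocks. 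Given $c\ge3$ and $k\ge t+2$ this is easy to exhibit explicitly: take a partition agreeing with $F_2$ on $t$ of its blocks (chosen among the $k-2$ common blocks, or the two blocks of $F_2\setminus F_1$) and scrambled elsewhere so that it shares no block — or fewer than $t$ blocks — with $F_1$; there is ample freedom for $c\ge 3$ to build two such partitions. I would phrase this as producing $A,A'\in\uu$ with $|A\cap F_1|,|A'\cap F_1|<t$ but $|A\cap F_2|,|A'\cap F_2|\ge t$, which forces the strict inequality and hence contradicts $\ms$ being a fragment.

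The main obstacle is the bookkeeping in the second paragraph: correctly deducing from semi-imprimitivity that the pair $(F_1,F_2)$ is forced into one of very few overlap types, rather than having to handle arbitrary pairs. If that reduction is clean, the counting in the last paragraph is routine for $c\ge3$, $k\ge t+2$. A secondary subtlety is handling small edge cases (e.g. $t=k-2$, or the smallest values of $c$) where the "ample freedom" must be checked by hand; I would dispatch those by the explicit constructions of $A,A'$ rather than asymptotics. Throughout I would lean on \cite[Theorem 1.1]{WZ} and \cite[Lemma 2.2]{WZ} exactly as the surrounding text does, so the only genuinely new content is the overlap-type analysis and the small explicit constructions.
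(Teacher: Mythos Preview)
Your approach diverges sharply from the paper's, and it contains a genuine gap in the place you identify as the ``main obstacle''.

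The paper's proof is a two-line algebraic argument: if a fragment of size $2$ exists, then by \cite[Proposition 2.3]{WZ} (which you do not invoke) the primitivity of the action forces $\Gamma/(\bigcap_{F\in\mx}\Gamma_F)$ to embed in the dihedral group $D_{|\mx|}$; since the action is faithful this would make $S_{ck}$ a subgroup of a dihedral group, which is absurd. No combinatorics of partitions is needed.

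Your plan instead tries to use semi-imprimitivity of $\ms=\{F_1,F_2\}$ to pin down the overlap type of the pair, then finish by counting. The gap is that semi-imprimitivity is \emph{vacuous} for two-element sets: the condition $|\sigma(U)\cap U|\in\{0,1,|U|\}$ is automatically satisfied by any $2$-set $U$, since the intersection of two $2$-sets always has size $0$, $1$, or $2$. In particular, your assertion that ``$\sigma$ cannot send $F_1$ to $F_1$ and $F_2$ to some third partition'' is false --- that produces $|\sigma(\ms)\cap\ms|=1$, which is allowed. So the entire structural reduction you describe yields no restriction on $(F_1,F_2)$, and the case analysis you anticipate never materialises.

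What remains of your plan is the direct counting: show, for \emph{every} pair $F_1\neq F_2$, that there exist at least two partitions $A$ with $|A\cap F_1|\ge t$ and $|A\cap F_2|<t$. This may well be true under the standing hypotheses $c\ge3$, $k\ge t+2$ (there is indeed plenty of room to construct such $A$), but you would have to prove it uniformly over all overlap types of $(F_1,F_2)$, not just the ``close'' ones you expected semi-imprimitivity to isolate. That is a different and longer argument than the one you sketched, and it is far less economical than simply citing \cite[Proposition 2.3]{WZ} as the paper does.
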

	\begin{proof}
		Suppose for contradiction that there exists a fragment in $\mx$ with size $2$. Observe that the action of $\Gamma$ on $\mx$ is primitive. 
		Then by \cite[Proposition 2.3]{WZ}, $\Gamma/(\bigcap_{F\in\mx}\Gamma_F)$ is isomorphic to a subgroup of the dihedral group $D_{|\mx|}$. 
		However, $\Gamma/(\bigcap_{F\in\mx}\Gamma_F)\cong\Gamma$ is not isomorphic to any subgroup of $D_{|\mx|}$, a contradiction.
	\end{proof}

	Pick $C\in \ms$. Since the action of $\Gamma$ on $\mx$ is primitive, by \cite[Proposition 2.4]{WZ} and Claim \ref{c1}, there exists unique non-trivial fragment $\mt$ in $\mx$ with $\ms\cap\mt=\{C\}$ and 
	$$|\ms|=|\mt|=\dfrac{1}{2}(|\mx|-|N(\{C\})|+1).$$
	Note that
	$$|N(\ms)|=|N(\mt)|=|N(\{C\})|-1+\dfrac{1}{2}(|\mx|-|N(\{C\})|+1)=\dfrac{1}{2}(|\mx|+|N(\{C\})|-1).$$
	If $N(\ms\cup\mt)=\my$, then 
	$$|N(\{C\})|\le|N(\ms)\cap N(\mt)|=(|\mx|+|N(\{C\})|-1)-|\mx|=|N(\{C\})|-1,$$
	a contradiction. 
	Therefore $N(\ms\cup\mt)\neq\my$. By \cite[Lemma 2.1 (\ro2)]{WZ}, we know $\ms\cup\mt$ is a fragment in $\mx$. 
	Since $\ms\subsetneq(\ms\cup\mt)$, $\ms\cup\mt$ is a trivial fragment and $|\my\bs N(\ms\cup\mt)|=1$. 
	
	\begin{cl}\label{c2}
		$\Gamma_C(\ms\cup\mt)=\ms\cup\mt$.
	\end{cl}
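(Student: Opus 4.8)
The claim to prove is that $\Gamma_C(\ms\cup\mt)=\ms\cup\mt$, i.e.\ that the stabilizer of the point $C\in\ms\cap\mt$ preserves the set $\ms\cup\mt$ as a whole. The natural strategy is to show that $\Gamma_C$ permutes the collection of non-trivial fragments through $C$, and that $\ms$ and $\mt$ together form a $\Gamma_C$-invariant pair. First I would observe that $\Gamma_C$ acts on $\mx$ fixing $C$, and that the fragment property is $\Gamma$-equivariant: if $\ma$ is a non-trivial fragment in $\mx$ with $N(\ma)\neq\my$, then so is $\sigma(\ma)$ for every $\sigma\in\Gamma$, since $\Gamma$ preserves adjacency in $G$ and hence $|N(\sigma(\ma))|-|\sigma(\ma)|=|N(\ma)|-|\ma|$ and $N(\sigma(\ma))=\sigma(N(\ma))\neq\my$. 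Consequently, for $\sigma\in\Gamma_C$, both $\sigma(\ms)$ and $\sigma(\mt)$ are non-trivial fragments in $\mx$ of the same (minimum non-trivial) size, each containing $\sigma(C)=C$.

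**Main step.** I would then invoke the uniqueness established just before the claim: by \cite[Proposition 2.4]{WZ} together with Claim \ref{c1}, the only non-trivial fragments in $\mx$ containing $C$ and having size $\tfrac12(|\mx|-|N(\{C\})|+1)$ are $\ms$ and $\mt$ themselves — more precisely, $\mt$ is the \emph{unique} non-trivial fragment with $\ms\cap\mt=\{C\}$, and symmetrically $\ms$ is the unique one with $\mt\cap\ms=\{C\}$. Since $\ms$ is the minimum-sized non-trivial fragment, any non-trivial fragment through $C$ of minimum size must be one of $\ms,\mt$. Thus for $\sigma\in\Gamma_C$ we have $\{\sigma(\ms),\sigma(\mt)\}\subseteq\{\ms,\mt\}$, and since $\sigma$ is injective this forces $\{\sigma(\ms),\sigma(\mt)\}=\{\ms,\mt\}$. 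Hence $\sigma(\ms\cup\mt)=\sigma(\ms)\cup\sigma(\mt)=\ms\cup\mt$, which is exactly the claim. One should double-check the semi-imprimitivity input: $\ms$ being semi-imprimitive means $|\sigma(\ms)\cap\ms|\in\{0,1,|\ms|\}$ for all $\sigma\in\Gamma$; applied to $\sigma\in\Gamma_C$ this gives $|\sigma(\ms)\cap\ms|\geq1$ (both contain $C$), so $\sigma(\ms)=\ms$ or $|\sigma(\ms)\cap\ms|=1$, and in the latter case the uniqueness clause identifies $\sigma(\ms)$ with $\mt$.

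**Expected obstacle.** The delicate point is making sure the uniqueness statement is being applied correctly: \cite[Proposition 2.4]{WZ} guarantees that given the minimum fragment $\ms$ and a chosen point $C\in\ms$, there is a \emph{unique} partner fragment $\mt$ meeting $\ms$ in exactly $\{C\}$, but I must confirm that every minimum-sized non-trivial fragment containing $C$ is either $\ms$ or this $\mt$ — this should follow because if $\ms'$ is another such fragment then either $\ms'\cap\ms$ has size $|\ms|$ (so $\ms'=\ms$) or size $1$; if the intersection is $\{C\}$ then $\ms'=\mt$ by uniqueness, and if it is $\{C'\}$ for $C'\neq C$ one repeats the argument with base point $C'$ and uses semi-imprimitivity to rule out a third fragment. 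I would spell out just enough of this to be rigorous, but the heart is the equivariance of the fragment notion plus the uniqueness already proved, so no new computation is needed.
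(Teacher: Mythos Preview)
Your proposal is correct and follows essentially the same route as the paper's proof: equivariance of fragments under $\Gamma$, then for $\sigma\in\Gamma_C$ use semi-imprimitivity of $\ms$ (and of $\mt$, which is also a minimum non-trivial fragment) to force $\sigma(\ms)\cap\ms\in\{\{C\},\ms\}$, and invoke the uniqueness of $\mt$ to conclude $\sigma(\ms)\in\{\ms,\mt\}$. Your ``expected obstacle'' is a non-issue: since $\sigma\in\Gamma_C$, the intersection $\sigma(\ms)\cap\ms$ automatically contains $C$, so the case $\ms'\cap\ms=\{C'\}$ with $C'\neq C$ never arises and no extra argument is needed.
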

	\begin{proof}
		Observe that $\sigma(\ms)$ is also a nontrivial fragment in $\mx$ containing $C$ for each $\sigma\in \Gamma_C$. 
		If $\sigma(\ms)\neq\ms$, then since $\ms$ is semi-imprimitive and $\ms\cap\sigma(\ms)\neq\emptyset$, we have $|\sigma(\ms)\cap\ms|=1$, i.e., $\sigma(\ms)\cap\ms=\{C\}$, implying that $\sigma(\ms)=\mt$.
		Consequently $\sigma(\ms)\in\{\ms,\mt\}$.  So is $\sigma(\mt)$. 
		Notice that $\ms\subseteq\Gamma_C(\ms)$ and $\mt\subseteq\Gamma_C(\mt)$. 
		We further conclude that 
		$\Gamma_C(\ms\cup\mt)=\Gamma_C(\ms)\cup \Gamma_C(\mt)=\ms\cup\mt$.
	\end{proof}

	\begin{cl}\label{c3}
		$\ms$ is $t$-intersecting.
	\end{cl}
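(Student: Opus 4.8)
The plan is to prove, for each $C\in\ms$, that the unique vertex of $\my$ missed by $N(\ms\cup\mt)$ is $C$ itself; this forces every member of $\ms\cup\mt$ — and in particular every member of $\ms$ — to share at least $t$ blocks with $C$, and letting $C$ range over $\ms$ yields the claim. First I would use Claim~\ref{c2}: $\Gamma_C$ stabilises $\ms\cup\mt$ setwise, and since $\Gamma$ preserves the adjacency of $G$, it also stabilises $N(\ms\cup\mt)$, hence the singleton $\my\setminus N(\ms\cup\mt)=\{D\}$ (recall that $|\my\setminus N(\ms\cup\mt)|=1$ has already been established). Since stabilising a one-element set means fixing its element, we get $\sigma(D)=D$ for all $\sigma\in\Gamma_C$, i.e. $\Gamma_C\le\Gamma_D$.

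Next I would identify $D$ with $C$. By the main result of \cite{SN}, $\Gamma_C\cong S_c\wr S_k$ is a maximal subgroup of $\Gamma=S_{ck}$, and $\Gamma_D\neq\Gamma$ since $\Gamma$ is transitive on $\uu$ with $|\uu|>1$; hence $\Gamma_C=\Gamma_D$. Choosing $\sigma\in\Gamma$ with $\sigma(C)=D$, we get $\sigma\Gamma_C\sigma^{-1}=\Gamma_D=\Gamma_C$, so $\sigma\in N_\Gamma(\Gamma_C)$; maximality of $\Gamma_C$ forces $N_\Gamma(\Gamma_C)\in\{\Gamma_C,\Gamma\}$, and $N_\Gamma(\Gamma_C)=\Gamma$ is impossible because the only proper normal subgroups of $S_{ck}$ are $\{1\}$ and $A_{ck}$, neither of which is $S_c\wr S_k$ for $c\ge3$, $k\ge t+2$. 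Therefore $\sigma\in\Gamma_C$ and $D=\sigma(C)=C$.

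Finally, $C=D\notin N(\ms\cup\mt)$ says that $C$, viewed as a vertex of $\my$, has no $G$-neighbour in $\ms\cup\mt$, equivalently $|C\cap F|\ge t$ for every $F\in\ms\cup\mt\supseteq\ms$. Since $C\in\ms$ was arbitrary (the whole construction of $\mt$, and Claim~\ref{c2}, being valid for every choice of $C\in\ms$), any two members of $\ms$ share at least $t$ blocks, so $\ms$ is $t$-intersecting. The step I expect to be most delicate is $\Gamma_C=\Gamma_D\Rightarrow C=D$: a priori two distinct uniform partitions could have the same stabiliser, and excluding this relies on $\Gamma_C$ being self-normalising (maximal and not normal in $S_{ck}$); everything else is a direct dictionary between the bipartite graph $G$, the action of $\Gamma$, and the $t$-intersection condition.
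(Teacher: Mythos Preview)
Your argument is correct and takes a genuinely different route from the paper's. The paper argues combinatorially: writing $Z$ for the unique element of $\my\setminus N(\ms\cup\mt)$ and fixing $A\in\ms\cup\mt$, it repeatedly applies transpositions in $\Gamma_C$ (which preserve $\ms\cup\mt$ by Claim~\ref{c2}) to strip from $A\cap Z$ any block not already in $C$, eventually producing $B\in\ms\cup\mt$ with $B\cap Z=A\cap C\cap Z$; then $|A\cap C|\ge|A\cap C\cap Z|=|B\cap Z|\ge t$. Your approach is purely group-theoretic: Claim~\ref{c2} forces $\Gamma_C$ to fix the unique missed vertex $D$, maximality of $\Gamma_C$ (from~\cite{SN}) gives $\Gamma_C=\Gamma_D$, and self-normalisation of $\Gamma_C$ (maximal, not normal in $S_{ck}$) forces $D=C$, whence every $F\in\ms$ satisfies $|F\cap C|\ge t$. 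The paper's argument is more elementary---it only uses that certain transpositions lie in $\Gamma_C$ and would survive in settings where the stabiliser is not maximal---while yours is shorter and more conceptual, extracting the conclusion directly from the structural fact (already quoted in the paper) that $S_c\wr S_k$ is maximal in $S_{ck}$. Both arguments finish the same way, by letting $C$ range over $\ms$.
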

	\begin{proof}
		Let $Z$ be the unique member of $\my\bs N(\ms\cup\mt)$, and $A\in\ms\cup\mt$. Next we show 
		there exists $B\in\ms\cup\mt$ such that $A\cap C\cap Z=B\cap Z$. 
		
		If $A\cap C\cap Z=A\cap Z$, then there is nothing to prove. Now suppose $A\cap C\cap Z\neq A\cap Z$. 
		Then $e\in A\cap Z$ and $e\not\in C$ for some $e\in A$. 
		Notice that there exist at least two blocks in $C$ intersecting $e$. 
		Furthermore, we have $e\cap h\neq\emptyset$ and $f\cap h\neq\emptyset$ for some $f\in A\bs\{e\}$ and $h\in C$. 
		
		Pick $i\in e\cap h$ and $j\in f\cap h$. Set $\sigma=(i\ j)$. 
		We have $\sigma\in\Gamma_C$, and $\sigma(A)=(A\bs\{e,f\})\cup\{\sigma(e),\sigma(f)\}\in\ms\cup\mt$ by Claim \ref{c2}. 
		Observe that $\sigma(e),\sigma(f)\not\in Z$. 
		Then $\sigma(A)\cap Z\subsetneq A\cap Z$ and $\sigma(A)\cap C\cap Z=A\cap C\cap Z$. 
		If $\sigma(A)\cap Z=A\cap C\cap Z$, then let $B=\sigma(A)$. 
		If $\sigma(A)\cap Z\neq A\cap C\cap Z$, then do a similar operation on $\sigma(A)$. 
		Since $(A\cap Z)\bs C$ is finite, we finally get $B\in\ms\cup\mt$ with $A\cap C\cap Z=B\cap Z$.
		
		Now $|A\cap C|\ge |A\cap C\cap Z|=|B\cap Z|\ge t$. 
		By the arbitrariness of the selection of $C$, we know $\ms$ is $t$-intersecting.	
	\end{proof}

	Let $$N_i(C)=\{A\in\uu:|A\cap C|=i\}.$$	
	Notice that for $i\in\{t,t+1,\dots,k-1\}$,
	\begin{equation}\label{theta-C}
		\theta(c,k,i)=\sum_{j=i}^{k-2}\dfrac{{k-i\c j-i}}{{k\c j}}|N_j(C)|+1.
	\end{equation}
	By Theorem \ref{EKR-perfect} and Claim \ref{c3}, we have
	$$|\mx|-|N(\{C\})|+1=2|\ms|\le2\theta(c,k,t)=2\left(\dfrac{1}{{k\choose t}}|N_t(C)|+\sum_{j=t+1}^{k-2}\dfrac{{k-t\c j-t}}{{k\c j}}|N_j(C)|+1\right).$$
	Hence
	$$\left(1-\dfrac{2}{{k\c t}}\right)|N_t(C)|\le\sum_{j=t+1}^{k-2}\left(\dfrac{2{k-t\c j-t}}{{k\c j}}-1\right)|N_j(C)|.$$
	This together with \eqref{theta-C} produces 
	\begin{equation*}
		\begin{aligned}
			%\dfrac{\theta(c,k,t)-(k-t)\theta(c,k,t+1)}{2}
			\theta(c,k,t)-(k-t)\theta(c,k,t+1)
			&\le\dfrac{1}{{k\c t}}|N_t(C)|-\sum_{j=t+1}^{k-2}\dfrac{(j-t-1){k-t\c j-t}}{{k\c j}}+1-(k-t)\\
			&\le\left(1-\dfrac{2}{{k\choose t}}\right)|N_t(C)|-\sum_{j=t+1}^{k-2}\dfrac{(j-t-1){k-t\c j-t}}{{k\c j}}|N_j(C)|\\
			&\le\sum_{j=t+1}^{k-2}\dfrac{(3-j+t){k-t\c j-t}-{k\c j}}{{k\c j}}|N_j(C)|.
		\end{aligned}
	\end{equation*}
	Notice that, if $k\ge t+3$, then $2\le t+1\le k-2$ and
	$$2{k-t\c1}-{k\c t+1}\le2(k-t)-{k\c2}\le0;$$
	if $k\ge t+4$, then $3\le t+2\le k-2$ and 
	$${k-t\c2}-{k\c t+2}\le{k-t\c2}-\min\left\{{k\c3},{k\c k-2}\right\}\le0.$$
	Then 
	\begin{equation*}
		\begin{aligned}
	2&\le(k-t)((k-t)^{c-2}-1)=(k-t)^{c-1}-(k-t)\\
	&<\dfrac{1}{k-t}{(k-t)c\c c}-(k-t)
	=\dfrac{\theta(c,k,t)-(k-t)\theta(c,k,t+1)}{\theta(c,k,t+1)}\le0,
		\end{aligned}
\end{equation*}
	a contradiction. Consequently, there exists no non-trivial fragment in $\mx$. This finishes the proof of Theorem \ref{SUM}.
	\qed

		\section{The case $k=t+2$}\label{t+2}
		
		Theorem \ref{HM-perfect} addresses the case $k\ge t+3$. In this section, we characterize all maximal cross $t$-intersecting subfamilies of $\uu$ for $k=t+2$, and for a finite set $X$, let $U^{X}_{c,\ell}$ denote the set of all families consisting of $\ell$ pairwise disjoint $c$-subsets of $X$. 
		 
	\begin{con}\label{ex-t+2-1}
		Suppose $c$, $k$ and $t$ are positive integers with $c\geq2$ and $k=t+2$. Let
		$$\mf=\{\{e_1,\dots,e_{t+2}\},\{e_1''',e_2,\dots,e_t,e_{t+1}',e_{t+2}''\}\}$$
		and
		$$\mg=\{\{e_1,\dots,e_t,e_{t+1}',e_{t+2}',\},\{e_1'',e_2,\dots,e_{t+1},e_{t+2}''\}\}$$
		be two subfamilies of $\uu$, where $e_{t+1}'\in{e_{t+1}\cup e_{t+2}\choose c}\bs\{e_{t+1},e_{t+2}\}$, $e_{t+2}''\in{e_1\cup e_{t+2}\choose c}\bs\{e_1,e_{t+2}\}$, and $e_{t+2}\not\subseteq e_{t+1}'\cup e_{t+2}''$.
	\end{con}

	\begin{con}\label{ex-t+2-2}
		Suppose $c$, $k$ and $t$ are positive integers with $c\geq2$ and $k=t+2$. Let
		$$\mf=\{\{e_1,\dots,e_{t+2}\},\{e_1''',e_2,\dots,e_t,e_{t+1}',e_{t+2}''\},\{e_1'',e_2,\dots,e_t,e_{t+1}''',e_{t+2}'\}\}$$
		and
		$$\mg=\{\{e_1,\dots,e_t,e_{t+1}',e_{t+2}'\},\{e_1'',e_2,\dots,e_{t+1},e_{t+2}''\},\{e_1''',e_2,\dots,e_t,e_{t+1}''',e_{t+2}\}\}$$
		be two subfamilies of $\uu$, where $e_{t+1}'\in{e_{t+1}\cup e_{t+2}\choose c}\bs\{e_{t+1},e_{t+2}\}$, $e_{t+2}''\in{e_1\cup e_{t+2}\choose c}\bs\{e_1,e_{t+2}\}$, and $e_{t+2}\subseteq e_{t+1}'\cup e_{t+2}''$.
	\end{con}

	\begin{con}\label{ex-t+2-3}
		Suppose $c$, $k$ and $t$ are positive integers with $c\geq2$ and $k=t+2$. Let
		$$\mf=\{\{e_1,\dots,e_{t+2}\},\{e_1',e_2',e_3,\dots,e_t,e_{t+1}',e_{t+2}'\}\}$$
		and
		$$\mg=\{\{e_1,\dots,e_t,e_{t+1}',e_{t+2}'\},\{e_1',e_2',e_3\dots,e_{t+2}\}\}$$
		be two subfamilies of $\uu$, where $\{e_1',e_2'\}\in U^{e_1\cup e_2}_{c,2}\bs\{ \{ e_1,e_2\} \}$ and $\{e_{t+1}',e_{t+2}'\}\in U^{e_{t+1}\cup e_{t+2}}_{c,2}\bs\{ \{ e_{t+1},e_{t+2}\} \}$.
	\end{con}

 For maximal cross $t$-intersecting subfamilies $\mf$ and $\mg$ of $U^{[c(t+2)]}_{c,t+2}$, if $\tau_t(\mf)=\tau_t(\mg)=t$, then by Lemma \ref{cover-intersection}, we know $\mf=\mg=\{F\in U^{[c(t+2)]}_{c,t+2}: T\subseteq F\}$ for some $T\in U^{[c(t+2)]}_{c,t}$. Therefore, in the following theorem, we may assume that $\tau_t(\mg)\ge t+1$.

	\begin{thm}\label{thm-t+2}
		Let $c$, $k$ and $t$ be positive integers with $c\ge 2$. Suppose $\mf$ and $\mg$ are  maximal cross $t$-intersecting subfamilies of $\uu$. If $k=t+2$ and $\tau_t(\mg)\ge t+1$, then one of the following holds.
		\begin{itemize}
			\item[\rm{(\ro1)}] $\mf=\{F\}$ and $\mg=\{G\in \uu: |G\cap F|\ge t\}$ for some $F\in \uu$.
			\item[\rm{(\ro2)}] $\mf$ and $\mg$ are families stated in Constructions \ref{ex-t+2-1} or \ref{ex-t+2-2}.
			\item[\rm{(\ro3)}] $t\ge2$ and $\mf$ and $\mg$ are families stated in Construction \ref{ex-t+2-3}.
			
		\end{itemize}
	\end{thm}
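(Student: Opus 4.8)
The plan is to begin by rephrasing maximality: a pair $(\mf,\mg)$ of cross $t$-intersecting subfamilies of $\uu$ is maximal if and only if it is \emph{Galois-closed}, meaning $\mf=\{F\in\uu:|F\cap G|\ge t\ \text{for all}\ G\in\mg\}$ and $\mg=\{G\in\uu:|G\cap F|\ge t\ \text{for all}\ F\in\mf\}$. Then I would record two elementary facts special to $k=t+2$: (a) distinct members of $\uu$ can never share exactly $k-1=t+1$ blocks (their unique missing blocks would have to coincide), so for $F\in\mf$ and $G\in\mg$ the condition $|F\cap G|\ge t$ forces $F=G$ or $|F\cap G|=t$, and in the latter case $F$ and $G$ share a set of $t$ blocks and differ only by re-partitioning the remaining $2c$-set into two $c$-blocks; (b) $\tau_t(\mh)=t$ if and only if $|\bigcap_{H\in\mh}H|\ge t$, and if moreover $|\mh|\ge2$ then this common block-set has exactly $t$ blocks. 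Since $\tau_t(\mg)\ge t+1$, the family $\mg$ is non-trivial and $|\mg|\ge2$.

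Next I would dispose of the case $\tau_t(\mf)=t$, which yields conclusion (\ro1). Here $\mf$ has a common $t$-set of blocks $T$; if $|\mf|\ge2$, pick distinct $F_1,F_2\in\mf$ and write $F_i=T\cup\{a_i,b_i\}$ where $\{a_i,b_i\}$ partitions $X_0:=[ck]\setminus\bigcup T$ and $\{a_1,b_1\}\ne\{a_2,b_2\}$. A short count then shows that every $G\in\mg$ contains $T$: otherwise $|G\cap T|\le t-1$, so $|G\cap F_1|\ge t$ and $|G\cap F_2|\ge t$ force $G$ to contain at least one block from $\{a_1,b_1\}$ and at least one from $\{a_2,b_2\}$, and since $a_1,b_1,a_2,b_2$ are pairwise disjoint $c$-subsets of the $2c$-set $X_0$, this forces $\{a_1,b_1\}=\{a_2,b_2\}$, a contradiction. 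Hence $\mg$ would be trivial, contradicting $\tau_t(\mg)\ge t+1$; so $|\mf|=1$, say $\mf=\{F\}$, and Galois closure gives $\mg=\{G\in\uu:|G\cap F|\ge t\}$, which (as $c\ge2$) has $\tau_t(\mg)\ge t+1$. Thus (\ro1) holds.

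It remains to treat the main case, where $\mf$ and $\mg$ are both non-trivial, so $|\mf|,|\mg|\ge2$ and $\tau_t(\mf),\tau_t(\mg)\in\{t+1,t+2\}$. The key preliminary is a rigidity estimate: any two distinct members $F_1,F_2$ of $\mf$ satisfy $|F_1\cap F_2|\in\{t-2,t-1\}$, and likewise inside $\mg$. The bound $|F_1\cap F_2|\ge t-2$ follows by double counting $|G\cap F_1|+|G\cap F_2|\le(t+2)+|G\cap F_1\cap F_2|$ for $G\in\mg$; the bound $|F_1\cap F_2|\le t-1$ follows by excluding $|F_1\cap F_2|=t$, since then $F_1$ and $F_2$ would share a common set of $t$ blocks and the counting argument above would force every $G\in\mg$, and hence every $F\in\mf$, to contain that set, making $\mf$ trivial. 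For $t=1$ this already forces $|F_1\cap F_2|=0$ for all distinct $F_1,F_2\in\mf$, which explains why conclusion (\ro3), in which the two members of $\mf$ share exactly $t-2$ blocks, cannot occur for $t=1$. With the rigidity estimate in hand, all members of $\mf\cup\mg$ are concentrated on a bounded set of active blocks, so the remaining analysis is finite, and I would organise it by $\tau_t(\mf)$. If $\tau_t(\mf)=t+1$, then each $S\in\mt_t(\mf)$ (of size $t+1$) has unique completion $S\cup\{[ck]\setminus\bigcup S\}\in\uu$, which is a $t$-cover of $\mf$ and hence lies in $\mg$ by Galois closure; combining this with the rigidity estimate forces $\mf$ and $\mg$ to be the pair of families in Construction \ref{ex-t+2-1}. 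If $\tau_t(\mf)=t+2$, then $\mt_t(\mf)=\mg$ and $\mt_t(\mg)=\mf$, and the local analysis, organised by how the few active blocks sit among the $t+2$ blocks, yields Construction \ref{ex-t+2-2}, or Construction \ref{ex-t+2-3} when the variation splits over two disjoint pairs of blocks (which needs $t\ge2$). In each subcase Galois closure then determines $\mf$ and $\mg$ exactly, and one checks conversely that the listed families are indeed maximal.

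I expect the genuinely laborious part to be the last paragraph. The finite case analysis in the non-trivial case is delicate because the number of ways to re-partition a $2c$-set into two $c$-blocks, and how many of these new blocks coincide with blocks already present in the configuration, depend on $c$ (so the smallest case $c=2$ needs separate attention), and the real work is the bookkeeping required to verify that no configuration other than those of Constructions \ref{ex-t+2-1}, \ref{ex-t+2-2} and \ref{ex-t+2-3} can satisfy all of the cross-intersection, non-triviality and maximality constraints simultaneously. By contrast, the reduction to Galois-closed pairs, the dichotomy leading to (\ro1), and the rigidity estimate $|F_1\cap F_2|\in\{t-2,t-1\}$ should be comparatively routine.
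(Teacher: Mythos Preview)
Your plan is viable but follows a different organisation from the paper's. The paper does not use your rigidity estimate or split on $\tau_t(\mf)$; instead it fixes a minimum $t$-cover $S\in\mt_t(\mg)$, completes it to $R\in\uu$, observes $R\in\mf$ by maximality, then picks two specific elements $G_1,G_2\in\mg$ with $G_i\cap S=T_i$ distinct $t$-subsets of $S$, and splits on $|T_1\cap T_2|\in\{t-1,t-2\}$: the case $t-1$ yields Constructions~\ref{ex-t+2-1} and~\ref{ex-t+2-2} after a delicate block-by-block analysis, and $t-2$ yields Construction~\ref{ex-t+2-3}. Your dichotomy on $\tau_t(\mf)$ lines up with the paper's outcomes: one checks that in Construction~\ref{ex-t+2-1} one has $\tau_t(\mf)=t+1$ (e.g.\ $\{e_1,\dots,e_t,e_{t+1}'\}$ is a $t$-cover), while in Constructions~\ref{ex-t+2-2} and~\ref{ex-t+2-3} one has $\tau_t(\mf)=t+2$, so your proposed split is consistent. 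Your Galois-closure observation that any $(t+1)$-cover of $\mf$ completes to an element of $\mg$ is a clean device the paper does not exploit; conversely, the paper's fixing of a concrete $R$ gives an anchor that makes the block-level bookkeeping in the final case analysis more mechanical.

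Two small points to fix. First, in your reduction to $|\mf|=1$ you write that ``$a_1,b_1,a_2,b_2$ are pairwise disjoint $c$-subsets of the $2c$-set $X_0$''; this is false (four pairwise disjoint $c$-sets cannot fit in a $2c$-set). The correct argument is that a block of $G$ in $\{a_1,b_1\}$ and a block of $G$ in $\{a_2,b_2\}$ are either equal or disjoint (being blocks of $G$), and in either case they force $\{a_1,b_1\}=\{a_2,b_2\}$ because both pairs partition $X_0$. Second, in your rigidity upper bound you conclude ``every $G\in\mg$, and hence every $F\in\mf$, to contain that set, making $\mf$ trivial''; the second clause is unnecessary and not obvious, whereas the first clause already gives $\tau_t(\mg)=t$, contradicting the hypothesis directly. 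Finally, when you assert $\mt_t(\mg)=\mf$ in the $\tau_t(\mf)=t+2$ subcase you implicitly use $\tau_t(\mg)=t+2$; you should argue this, e.g.\ by first treating $\min\{\tau_t(\mf),\tau_t(\mg)\}=t+1$ (which by symmetry reduces to your $\tau_t(\mf)=t+1$ subcase) so that the remaining subcase genuinely has both covering numbers equal to $t+2$.
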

	\begin{proof}
		Pick $R:=\{e_1,\dots,e_{t+2}\}\in U^{[c(t+2)]}_{c,t+2}$. 
		W.l.o.g., let $S=\{e_1,e_2,\dots,e_{\tau_t(\mg)}\}\in\mt_t(\mg)$.
		If $|G\cap S|\ge t+1$ for $G\in\mg$, then $S\subseteq G$. Since $\tau_t(\mg)\ge t+1$, we have $|G_1\cap S|=t$ for some $G_1\in\mg$. Write $T_1=G_1\cap S$. W.l.o.g., assume that $T_1=\{e_1,\dots,e_t\}$.
		Observe that $T_1\not\subseteq G_2$ for some $G_2\in\mg$. Then we also have $|G_2\cap S|=t$. Write $T_2=G_2\cap S$ and
		$$G_1=\{e_1,\dots,e_t,e_{t+1}',e_{t+2}'\},$$ where $\{e_{t+1}',e_{t+2}'\}\in U_{c,2}^{e_{t+1}\cup e_{t+2}}\bs\{\{e_{t+1},e_{t+2}\}\}$.
		Notice that $T_1\neq T_2$. We may assume $e_1\not\in G_2$ and $e_{t+1}\in G_2$.

	Since $S$ is a $t$-cover of $\mg$, by the maximality of $\mf$ and $\mg$, we have $R\in\mf$. Therefore, if $|\mf|=1$, then (\ro1) holds.
		Next assume that $\mf\bs\{R\}\neq\emptyset$.

		\begin{cl}\label{cl1}
			If $F\in\mf\bs\{R\}$, then $e_1\not\in F$, $T_1\not\subseteq F$, and $e_{t+1}\not\in F$, $T_2\not\subseteq F$. Moreover, $F\cap\{e_{t+1}',e_{t+2}'\}\neq\emptyset$.
		\end{cl}
		\begin{proof}
			Suppose for contradiction that $e_1\in F$. Since $e_1\not\in G_2$, there exist at least two blocks in $G_2$ intersecting $e_1$. Notice that these blocks are not in $T_2$.
			By $e_1\in F$ and $|F\cap G_2|\ge t$, we have $T_2\subseteq F$, implying that $S\subseteq F$ and $F=R$, a contradiction. Thus  $e_1\not\in F$ and $T_1\not\subseteq F$. Similarly, we have $e_{t+1}\not\in F$ and $T_2\not\subseteq F$.
			Then it follows from $|F\cap G_1|\ge t$ that $F\cap\{e_{t+1}',e_{t+2}'\}\neq\emptyset$.
		\end{proof}

	 Notice that $|T_1\cap G_2|=|T_1\cap S\cap G_2|=|T_1\cap T_2|\in\{t-2,t-1\}$.

		\begin{cl}
			Suppose $|T_1\cap G_2|=t-1$. Then $\mf$ and $\mg$ are families stated in Constructions \ref{ex-t+2-1} or \ref{ex-t+2-2}.
		\end{cl}
		\begin{proof}
			In this case, we have $T_2=\{e_2,\dots,e_{t+1}\}$ and
			$$G_2=\{e_1'',e_2,\dots,e_{t+1},e_{t+2}''\},$$ where $\{e_1'',e_{t+2}''\}\in U_{c,2}^{e_{1}\cup e_{t+2}}\bs\{\{e_{1},e_{t+2}\}\}$.

			Let $F\in\mf\bs\{R\}$. We first show $|F\cap\{e_{t+1}',e_{t+2}'\}|=|\{F\cap\{e_1'',e_{t+2}''\}\}|=1$.
			If $\{e_{t+1}',e_{t+2}'\}\subseteq F$, then $e_{t+1},e_1'',e_{t+2}''\not\in F$, implying that $|F\cap G_2|\le t-1$,  a contradiction to the assumption that $\mf$ and $\mg$ are cross $t$-intersecting.
			Notice that $|F\cap G_1|\ge t$ and  $T_1\not\subseteq F$. 
			We know $|F\cap\{e_{t+1}',e_{t+2}'\}|=1$. 
			On the other hand, by $e_{t+2}\cap e_{t+1}'\neq\emptyset$,  $e_{t+2}\cap e_{t+2}'\neq\emptyset$ and $e_{t+2}\subseteq e_1''\cup e_{t+2}''$, we have $|\{F\cap\{e_1'',e_{t+2}''\}\}|\le1$.
			This together with $|F\cap G_2|\ge t$ and $T_2\not\subseteq F$ produces
			$|\{F\cap\{e_1'',e_{t+2}''\}\}|=1$, as desired.
			
			Therefore, w.l.o.g., we may suppose that $e_{t+1}'\cap e_{t+2}''=\emptyset$.
			Observe that at least one of $e_{t+1}'\cap e_{t+2}''\cap e_{t+2}$ and $e_{t+2}'\cap e_{1}''\cap e_{t+2}$ is empty, and both $e_{t+1}'\cap e_1''$ and $e_{t+2}'\cap e_{t+2}''$ are non-empty. 
			We have $\{e_{t+1}',e_{t+2}''\}\subseteq F$ or $\{e_1'',e_{t+2}'\}\subseteq F$. Pick
			$$F_1=\{e_1''',e_2,\dots,e_t,e_{t+1}',e_{t+2}''\}\in\uu.$$
			It follows from $|F\cap G_1|\ge t$, $e_1\not\in F$ and $|F\cap\{e_{t+1}',e_{t+2}'\}|=1$ that $e_2,\dots,e_t\in F$.
			Furthermore, if $\{e_{t+1}',e_{t+2}''\}\subseteq F$, then $F=F_1$.

			Let $G\in\mg$. Recall that $\{e_{t+1}',e_{t+2}''\}\subseteq F$ or $\{e_1'',e_{t+2}'\}\subseteq F$.
			If $e_1\in G$, then by $|F\cap G|\ge t$ and $e_1\not\in F$, we have
			$e_2,\dots,e_t\in G$ and $G\cap\{e_{t+1}',e_{t+2}'\}\neq\emptyset$.
			We further conclude that $G=G_1$. Similarly, if $e_{t+1}\in G$, then $G=G_2$. Suppose $G\cap\{e_1,e_{t+1}\}=\emptyset$. By $|G\cap R|\ge t$, we have $e_2,\dots,e_t,e_{t+2}\in G$.

			Assume that $e_{t+2}\bs(e_{t+1}'\cup e_{t+2}'')\neq\emptyset$.
			Then this non-empty set is contained in $e_{1}''\cap e_{t+2}'$.
			So no member of $\mf\bs\{R\}$ contains $\{e_1'',e_{t+2}'\}$, implying that  $\mf\subseteq\{R,F_1\}$. 
			Then  $\mf=\{R,F_1\}$ follows from $|\mf|\ge2$.
			Since $e_1'''$, $e_{t+1}'$ and $e_{t+2}''$ intersect $e_{t+2}$, each member of $\mg$ contains $e_1$ or $e_{t+1}$, implying that
			$\mg=\{G_1,G_2\}$.
			Then $\mf$ and $\mg$ are families stated in Construction \ref{ex-t+2-1}.

			Assume that $e_{t+2}\bs(e_{t+1}'\cup e_{t+2}'')=\emptyset$.
			Then $e_1''\cap e_{t+2}'=\emptyset$. Set
			$$F_2=\{e_1'',e_2,\dots,e_t,e_{t+1}''',e_{t+2}'\}\in\uu.$$
			Recall that $F\in\mf\bs\{R\}$. If $\{e_1'',e_{t+2}'\}\subseteq F$, by
			$|F\cap G_2|\ge t$ and $e_{t+2}'\cap e_{t+2}=e_{t+2}''\cap e_{t+2}$,
			we have $F=F_2$. Therefore
			$\mf\subseteq\{R,F_1,F_2\}$.
			Set
			$$G_3=\{e_1''',e_2,\dots,e_t,e_{t+1}''',e_{t+2}\}.$$
			It is routine to check that $G_3\in\uu$.
			Recall that $G\in\mg$ with $G\cap\{e_1,e_{t+1}\}=\emptyset$ contains $e_2,\dots,e_t,e_{t+2}$.
			Since $|\mf|\ge2$, at least one of $|G\cap F_1|$ and $|G\cap F_2|$ is no less than $t$.
			Then $e_1'''\in G$ or $e_{t+1}'''\in G$, implying that $G=G_3$.
			Hence $\mg\subseteq\{G_1,G_2,G_3\}$. Notice that families $\{R,F_1,F_2\}$ and $\{G_1,G_2,G_3\}$ are cross $t$-intersecting. By the maximality of $\mf$ and $\mg$, we have $\mf=\{R,F_1,F_2\}$ and $\mg=\{G_1,G_2,G_3\}$, as the families stated in Construction \ref{ex-t+2-2}.
		\end{proof}

		\begin{cl}
			Suppose $|T_1\cap G_2|=t-2$. Then $\mf$ and $\mg$ are families stated in Construction \ref{ex-t+2-3}.
		\end{cl}
		\begin{proof}
			In this case, we have $\tau_t(\mg)=t+2$ and $S=R$.
			W.l.o.g., assume $G_2\cap S=T_2=\{e_3,\dots,e_{t+2}\}$, and write
			$$G_1=\{e_1,\dots,e_t,e_{t+1}',e_{t+2}'\},\quad G_2=\{e_1',e_2',e_3\dots,e_{t+2}\},$$
			where $\{e_1',e_2'\}\in U^{e_1\cup e_2}_{c,2}\bs\{ \{ e_1,e_2\} \}$ and $\{e_{t+1}',e_{t+2}'\}\in U^{e_{t+1}\cup e_{t+2}}_{c,2}\bs\{ \{ e_{t+1},e_{t+2}\} \}$.
			Let $F\in\mf\bs\{S\}$.
			Note that if $e_i\in F$ for some $i\in\{1,2\}$, then by $|F\cap G_2|\ge t$, we have $e_1',e_2'\not\in F$ and $T_2\subseteq F$, implying that $S=F$. Hence  $e_1,e_2\not\in F$.
			We further conclude that $F=\{e_1',e_2',e_3,\dots,e_t,e_{t+1}',e_{t+2}'\}$ and $\mf=\{S,F\}$.
			
			Let $G\in\mg$. Observe that $\{e_1,e_2,e_{t+1},e_{t+2}\}\cap G\neq\emptyset$.
			This together with $|F\cap G|\ge t$ yields $\{e_3,\dots,e_t,e_{t+1}',e_{t+2}'\}\subseteq G$ or $\{e_1',e_2',e_3,\dots,e_t\}\subseteq G$.
			Furthermore, $G=G_1$ or $G=G_2$ from $|G\cap S|\ge t$. Then $\mg=\{G_1,G_2\}$, and $\mf$ and $\mg$ are families stated in Construction \ref{ex-t+2-3}.
		\end{proof}
		Now we finish the proof  Theorem \ref{thm-t+2}.
	\end{proof}

\section{Inequalities}\label{COMPUTION}

In this section, we prove some inequalities used in this paper. 

\begin{lem}\label{key-bidaxiao}
	Let $c$, $k$ and $t$ be positive integers with $c\ge3$ and $k\ge t+2$.	If $c\ge3+2\log_2t$ or $k\ge 2t+2$ and  then the following hold.
	\begin{itemize}
		\item[{\rm(\ro1)}] 	$g(c,k,t,s+1)<g(c,k,t,s)$ for each $s\in\{t,t+1,\dots,k-2\}$.
		\item[{\rm(\ro2)}] 	$g(c,k,t,k)<g(c,k,t,k-2)$.
	\end{itemize}
\end{lem}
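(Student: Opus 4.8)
First I would put $g$ into closed form and reduce both inequalities to explicit arithmetic. Since $\theta(c,k,z)=\frac{1}{(k-z)!}\prod_{i=z}^{k-1}\binom{(k-i)c}{c}$ and $\prod_{j=1}^{z-t}\bigl(k-(t+j-1)\bigr)=\frac{(k-t)!}{(k-z)!}$, one obtains
$$g(c,k,t,z)=\frac{(k-t)!\,\binom{z}{t}}{\bigl((k-z)!\bigr)^{2}}\prod_{i=z}^{k-1}\binom{(k-i)c}{c},$$
and after cancelling common factors the successive ratio collapses to
$$\frac{g(c,k,t,s+1)}{g(c,k,t,s)}=\frac{(s+1)(k-s)^{2}}{(s+1-t)\,\binom{(k-s)c}{c}}.$$
Hence {\rm(\ro1)} is equivalent to $(s+1)(k-s)^{2}<(s+1-t)\binom{(k-s)c}{c}$ for all $s\in\{t,\dots,k-2\}$; writing $m:=k-s\in\{2,\dots,k-t\}$ this reads $\frac{s+1}{s+1-t}\,m^{2}<\binom{mc}{c}$. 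Taking $s=k-1$ in the same formula gives $\frac{g(c,k,t,k)}{g(c,k,t,k-1)}=\frac{k}{k-t}$, and chaining this with the $s=k-2$ instance shows {\rm(\ro2)} is equivalent to $4k(k-1)<(k-t)(k-t-1)\binom{2c}{c}$.

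Next I would record two elementary inputs. The first is the factor-by-factor bound
$$\binom{mc}{c}=\prod_{i=0}^{c-1}\frac{mc-i}{c-i}\ \ge\ m^{\,c-1}\bigl((m-1)c+1\bigr)\qquad(m\ge1),$$
obtained by keeping the $i=c-1$ factor exactly and using $\frac{mc-i}{c-i}\ge m$ for $0\le i\le c-2$; in particular $\binom{mc}{c}\ge m^{c}\ge m^{3}$ for $c\ge3$ and $\binom{2c}{c}\ge2^{\,c-1}(c+1)$, while also $\binom{2c}{c}\ge\binom{6}{3}=20$ for $c\ge3$ since $\binom{2c}{c}$ increases in $c$. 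The second is that among positive integers with a fixed sum the product is smallest when one of them equals $1$; applied to $(k-s-1)+(s+1-t)=k-t$ it gives $(k-s-1)(s+1-t)\ge k-t-1$, hence
$$(k-s)(s+1-t)=(k-s-1)(s+1-t)+(s+1-t)\ \ge\ (k-t-1)+(s+1-t)=s+(k-2t)\ \ge\ s+2$$
as soon as $k\ge2t+2$, so in that regime $\frac{s+1}{s+1-t}<k-s$ for every admissible $s$.

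The argument then splits along the hypothesis. In the regime $k\ge2t+2$ it is immediate: the second input gives $\frac{s+1}{s+1-t}m^{2}<m\cdot m^{2}=m^{3}\le\binom{mc}{c}$, which is {\rm(\ro1)}; and for {\rm(\ro2)}, writing $n:=k-t$ the hypothesis reads $t\le n-2$, so $k(k-1)\le(2n-2)(2n-3)$ and $(k-t)(k-t-1)=n(n-1)$ give $\frac{4k(k-1)}{(k-t)(k-t-1)}\le\frac{8(2n-3)}{n}<16<20\le\binom{2c}{c}$. In the regime $c\ge3+2\log_{2}t$ (equivalently $t\le2^{(c-3)/2}$) I would instead use $\frac{s+1}{s+1-t}\le t+1$ (the maximum, attained at $s=t$): then {\rm(\ro1)} reduces to $t+1<m^{\,c-3}\bigl((m-1)c+1\bigr)$, whose right-hand side is nondecreasing in $m$, so it suffices to treat $m=2$, i.e.\ $t+1<2^{\,c-3}(c+1)$; setting $y:=2^{(c-3)/2}\ge1$ this is the quadratic inequality $y+1<y^{2}(c+1)$, clear since $c+1\ge4$ and $4y^{2}-y-1>0$ for $y\ge1$. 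Likewise {\rm(\ro2)}, via $\frac{k}{k-t}\le\frac{t+2}{2}$ and $\frac{k-1}{k-t-1}\le t+1$, reduces to $2(t+2)(t+1)<\binom{2c}{c}$ and then to $2(y+2)(y+1)<2^{\,c-1}(c+1)$, once more a quadratic inequality in $y$ valid for all $c\ge3$.

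Essentially everything here is bookkeeping once the closed form and the two inputs are in hand; the one spot where a little care is genuinely needed is the boundary $c=3$ (which forces $t=1$). There the naive estimate $\binom{mc}{c}\ge m^{c}$ only matches, rather than strictly beats, the quantity $(t+1)m^{2}$ at $m=2$, so one must retain the extra factor in $\binom{mc}{c}\ge m^{c-1}\bigl((m-1)c+1\bigr)$ (or simply cite $\binom{6}{3}=20$). I expect this small-parameter checking, not any structural difficulty, to be the main obstacle.
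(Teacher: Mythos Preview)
Your proof is correct and follows the same strategy as the paper: compute the ratio $\frac{g(c,k,t,s+1)}{g(c,k,t,s)}=\frac{(s+1)(k-s)^2}{(s+1-t)\binom{(k-s)c}{c}}$ and bound it below $1$ using $\binom{mc}{c}\ge m^c$ together with elementary estimates on $\frac{s+1}{s+1-t}$. Your treatment of the $k\ge 2t+2$ regime is a bit cleaner than the paper's---the inequality $(k-s)(s+1-t)\ge s+2$ handles all $s$ at once, whereas the paper splits into $s\ge k/2$ and $s<k/2$---and your explicit attention to the boundary $c=3,\ t=1$ via the refined bound $\binom{mc}{c}\ge m^{c-1}\bigl((m-1)c+1\bigr)$ is well placed; otherwise the arguments are interchangeable.
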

\begin{proof}
	(\ro1) Let $s\in\{t,t+1,\dots,k-2\}$.
	By ${(k-s)c\choose c}>(k-s)^c$ for $c\ge3$, we have
	$$\dfrac{g(c,k,t,s+1)}{g(c,k,t,s)}
	=\dfrac{(k-s)^2}{{(k-s)c\c c}}\cdot\dfrac{s+1}{s-t+1}
	<\dfrac{1}{(k-s)^{c-2}}\cdot\dfrac{s+1}{s-t+1}.$$
	Suppose $k\ge2t+2$. If $s\ge\frac{k}{2}$, then
	\begin{align*}
		\dfrac{g(c,k,t,s+1)}{g(c,k,t,s)}&<\dfrac{1}{k-s}\cdot\left(1+\dfrac{t}{s-t+1}\right)\le\dfrac{1}{2}+\max\left\{\dfrac{1}{\frac{k}{2}}\cdot\dfrac{t}{\frac{k}{2}-t+1},\dfrac{1}{2}\cdot\dfrac{t}{k-t-1}\right\}\\
		&\le\dfrac{1}{2}+\dfrac{1}{2}\cdot\dfrac{t}{t+1}<1;
	\end{align*}
	If $s<\frac{k}{2}$, then
	$$\dfrac{g(c,k,t,s+1)}{g(c,k,t,s)}<\dfrac{1}{\frac{k}{2}}\cdot(t+1)\le1.$$
	Next suppose $c\ge3+2\log_2t$. By $t\le s\le k-2$, we have
	$$\dfrac{g(c,k,t,s+1)}{g(c,k,t,s)}<\dfrac{t+1}{2^{c-2}}\le\dfrac{t+1}{2t^2}\le1.$$

	(\ro2) Observe that
	$$\dfrac{g(c,k,t,k)}{g(c,k,t,k-2)}=\dfrac{4}{{2c\choose c}}\cdot\dfrac{(k-1)k}{(k-t-1)(k-t)}=\dfrac{4}{{2c\c c}}\left(1+\dfrac{t}{k-t-1}\right)\left(1+\dfrac{t}{k-t}\right).$$
	Suppose $k\ge2t+2$. By $c\ge3$, we have
	$$\dfrac{g(c,k,t,k)}{g(c,k,t,k-2)}\le\dfrac{1}{5}\left(1+\dfrac{t}{t+1}\right)\left(1+\dfrac{t}{t+2}\right)<\dfrac{4}{5}<1.$$
	Next suppose $c\ge3+2\log_2t$. We have
	$$\dfrac{g(c,k,t,k)}{g(c,k,t,k-2)}\le\dfrac{12}{{6\choose3}}<1$$
	for $t=1$, and
	$$\dfrac{g(c,k,t,k)}{g(c,k,t,k-2)}<\dfrac{1}{2^{c-2}}\left(t+1\right)\left(\dfrac{t}{2}+1\right)\le\dfrac{3t^2}{2^{c-1}}<1$$
	for $t\ge2$, as desired.
	%From $k\ge t+3$, we get
%$$\dfrac{g(c,k,t,k)}{g(c,k,t,k-2)}<\dfrac{1}{2^{c-2}}\left(1+\dfrac{t}{2}\right)\left(1+\dfrac{t}{3}\right)\le\dfrac{t^2}{2^{c-3}}\le1,$$
	%as desired.
\end{proof}

\begin{lem}\label{=t+1=t+1}
	Let $c$, $k$ and $t$ be  positive integers with $c\ge6$ and $k\ge t+3$. If $c\ge4\log_2t+7$ or $k\ge2t+3$, then the following hold.
	\begin{itemize}
		\item[{\rm(\ro1)}] $g(c,k,t,t+1)g(c,k,t,t+2)<(f_0(c,k,t))^2$.
		\item[{\rm(\ro2)}] $g(c,t+3,t,t+1)g(c,t+3,t,t+3)<(f_0(c,t+3,t))^2$.
	\end{itemize}
\end{lem}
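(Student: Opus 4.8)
\textbf{Proof proposal for Lemma \ref{=t+1=t+1}.}

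The plan is to reduce both inequalities to explicit estimates on the quantities $\theta(c,k,t,z)$ and then push through the resulting rational inequalities, using the hypothesis $c\ge 4\log_2 t+7$ or $k\ge 2t+3$ to control the error terms. First I would recall from the definition $g(c,k,t,z)=\theta(c,k,z)\binom{z}{t}\prod_{j=1}^{z-t}(k-(t+j-1))$ and $f_0(c,k,t)=(k-t-1)\theta(c,k,t+1)-\binom{k-t-1}{2}\theta(c,k,t+2)$, so that for (\ro1) the left side is $g(c,k,t,t+1)g(c,k,t,t+2)=\theta(c,k,t+1)\theta(c,k,t+2)(t+1)\binom{t+2}{2}(k-t)^2(k-t-1)$ (after simplifying the products), while the right side is the square of $(k-t-1)\theta(c,k,t+1)-\binom{k-t-1}{2}\theta(c,k,t+2)$. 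The natural move is to divide through by $(\theta(c,k,t+1))^2$ and set $\rho:=\theta(c,k,t+2)/\theta(c,k,t+1)$; note $\rho=\tfrac{1}{k-t-1}\binom{(k-t-1)c}{c}^{-1}$ times something — more precisely, from the product formula for $\theta$ one gets $\theta(c,k,t+1)=\theta(c,k,t+2)\cdot\tfrac{1}{k-t-1}\binom{(k-t-1)c}{c}$, so $\rho=(k-t-1)\big/\binom{(k-t-1)c}{c}$, which is extremely small (at most $(k-t-1)^{1-c}$ since $\binom{mc}{c}>m^c$). Then (\ro1) becomes an inequality of the form $A\rho < \big((k-t-1)-\binom{k-t-1}{2}\rho\big)^2$ with $A=(t+1)\binom{t+2}{2}(k-t)^2(k-t-1)$, i.e. a quadratic in $\rho$; since the right side is $(k-t-1)^2$ minus lower-order terms in $\rho$ and $\rho$ is tiny, I would show $A\rho < \tfrac12(k-t-1)^2$ and separately that the cross term $2(k-t-1)\binom{k-t-1}{2}\rho$ plus $A\rho$ stays below $(k-t-1)^2$, which comes down to bounding $\rho$ by a suitable explicit fraction.

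The crux is therefore the estimate $\rho = (k-t-1)/\binom{(k-t-1)c}{c} \le (k-t-1)^{1-c}$ combined with the case split. When $k\ge 2t+3$ we have $k-t-1\ge t+2\ge 3$ and $k-t\le 2(k-t-1)$, so $A\rho \le (t+1)\cdot\tfrac{(t+2)(t+1)}{2}\cdot 4(k-t-1)^2\cdot(k-t-1)\cdot(k-t-1)^{1-c}$; since $t+1\le k-t-1$ and $t+2\le k-t-1$ this is at most $2(k-t-1)^{2}\cdot(k-t-1)^{6-c}\cdot(\text{const})$, and $c\ge6$ forces the exponent $\le 0$, giving room to spare. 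When instead $c\ge 4\log_2 t+7$ (so $2^{c-7}\ge t^4$), I would bound $\rho\le 2^{-(c-1)}$ using $\binom{(k-t-1)c}{c}\ge 2^c$ (valid since $k-t-1\ge 2$), and then $A\rho$ is polynomial in $t$ and $k$ divided by $2^{c-1}$; the factor $2^{c-1}\ge 64t^4$ dominates, though here I must also absorb the $k$-dependence of $A$ — the cleanest route is to note $(k-t-1)^2(k-t)^2 \le \binom{(k-t-1)c}{c}$ for $c\ge 6$ (again from $\binom{mc}{c}>m^c\ge m^6\ge m^4(m+1)^2$ when... hmm, this needs $m^6 \ge m^4(m+1)^2$ i.e. $m^2\ge(m+1)^2$ which is false), so more carefully I would use $\binom{mc}{c}>m^c \ge m^{c-2}\cdot m^2$ and peel off the $(k-t)^2(k-t-1)$ against $m^{c-2}$ where $m=k-t-1$, leaving only the $t$-dependent factor $(t+1)\binom{t+2}{2}$ against the remaining powers of $2$. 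I expect this bookkeeping — deciding exactly how to split the powers of $m=k-t-1$ between cancelling the polynomial-in-$k$ part of $A$ and cancelling the polynomial-in-$t$ part — to be the main obstacle; everything else is routine once the split is fixed.

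For part (\ro2), the same strategy applies with $k=t+3$, so $k-t-1=2$ and the binomial coefficients become concrete: $\theta(c,t+3,t+1)=\theta(c,t+3,t+2)\cdot\tfrac12\binom{2c}{c}$, hence $\rho=2/\binom{2c}{c}$. Now $g(c,t+3,t,t+1)g(c,t+3,t,t+3)$ simplifies to $\theta(c,t+3,t+1)\theta(c,t+3,t+3)\binom{t+3}{t}\binom{t+1}{t}\cdot(\text{product of linear factors})$, and using the further relation $\theta(c,t+3,t+2)=\theta(c,t+3,t+3)\cdot\binom{2c}{c}$ (from the product formula with $z=t+2$, $k=t+3$) one can express everything in terms of $\theta(c,t+3,t+3)$ and the single small parameter $1/\binom{2c}{c}$. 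The target $f_0(c,t+3,t)=2\theta(c,t+3,t+1)-\theta(c,t+3,t+2)=\theta(c,t+3,t+2)\big(\binom{2c}{c}-1\big)$, so after dividing by $(\theta(c,t+3,t+2))^2$ the inequality reads $(t+1)\cdot\tfrac{(t+3)(t+2)(t+1)}{6}\cdot\big(\text{linear factors in }k=t+3\big)\cdot\tfrac{1}{\binom{2c}{c}} < \big(\binom{2c}{c}-1\big)^2\big/\binom{2c}{c}$, i.e. a polynomial in $t$ on the left against $\binom{2c}{c}$ on the right. Since $\binom{2c}{c}\ge 2^c$ and $c\ge 4\log_2 t+7$ gives $2^c\ge 128 t^4$ (in the $c$-large regime), or else $c\ge6$ and $k=t+3\ge 2t+3$ forces $t=1$ so everything is a fixed numerical check, I would verify $128 t^4 \ge 6(t+1)^2(t+2)(t+3)\cdot(\text{small linear factors})$ for all $t\ge 1$ directly, and handle $t=1$ with $\binom{2c}{c}\ge\binom{12}{6}=924$ for $c\ge6$. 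I would then defer the remaining purely numerical verifications to explicit one-line computations in the style already used in Lemma \ref{key-bidaxiao}.
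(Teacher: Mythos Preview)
Your approach is correct and is essentially the same computation the paper carries out: both arguments reduce to controlling the single small parameter $\rho=\theta(c,k,t+2)/\theta(c,k,t+1)=(k-t-1)/\binom{(k-t-1)c}{c}$ via $\binom{mc}{c}>m^c$ and the case split on $c$ versus $k$. The organizational difference is that the paper does not work with the quadratic $A\rho<(m-\binom{m}{2}\rho)^2$; instead it writes down the two ratios $f_0/g(c,k,t,t+1)$ and $f_0/g(c,k,t,t+2)$ in closed form,
\[
\frac{f_0}{g(t+1)}=\frac{k-t-1}{(t+1)(k-t)}\Bigl(1-\tfrac{(k-t-1)(k-t-2)}{2\binom{(k-t-1)c}{c}}\Bigr),\qquad
\frac{f_0}{g(t+2)}=\frac{1}{(t+1)(t+2)}\Bigl(\tfrac{2}{(k-t)(k-t-1)}\tbinom{(k-t-1)c}{c}-\tfrac{k-t-2}{k-t}\Bigr),
\]
bounds each one separately (the first by a constant over $t$, the second by a constant times $t^2$), and multiplies. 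This sidesteps exactly the bookkeeping you flag: there is no need to apportion powers of $m=k-t-1$ between the $k$-part and the $t$-part of $A$, because the $k$-dependence of each $g$ already cancels naturally against that of $f_0$ inside each ratio. Your quadratic route does go through once the constants are tracked (in the $c$-large regime one may assume $k\le 2t+2$, so $m\le t+1$, and then $(m+1)^2/m^{c-2}\le 36/2^c\le 36/(128t^4)$ suffices), but the separated ratios make the verification one-line.

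One slip: in part~(\ro2) you write ``$k=t+3\ge 2t+3$ forces $t=1$''; in fact it forces $t\le 0$, which is impossible, so for $k=t+3$ the hypothesis is necessarily $c\ge 4\log_2 t+7$. The paper notes this and proceeds only with the $c$-large bound, exactly as you then do.
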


\begin{proof}
(\ro1) Note that
	\begin{equation}\label{f/g}
		\begin{aligned}
			\dfrac{f_0(c,k,t)}{g(c,k,t,t+1)}&=\dfrac{k-t-1}{(t+1)(k-t)}\left(1-\dfrac{(k-t-1)(k-t-2)}{2{(k-t-1)c\c c}}\right),\\
			\dfrac{f_0(c,k,t)}{g(c,k,t,t+2)}&=\dfrac{1}{(t+1)(t+2)}\left(\dfrac{2}{(k-t)(k-t-1)}{(k-t-1)c\c c}-\dfrac{k-t-2}{k-t}\right).
		\end{aligned}
	\end{equation}
	If $c\ge4\log_2t+7$, then  by $k\ge t+3$, we have
	\begin{equation*}
		\begin{aligned}
			\dfrac{f_0(c,k,t)}{g(c,k,t,t+1)}&>\dfrac{1}{3t}\left(1-\dfrac{1}{2(k-t-1)^{c-2}}\right)
			\ge\dfrac{1}{3t}\left(1-\dfrac{1}{64t^4}\right)\ge\dfrac{21}{64t},\\
			\dfrac{f_0(c,k,t)}{g(c,k,t,t+2)}&>\dfrac{1}{(t+1)(t+2)}\left(\dfrac{4}{3}(k-t-1)^{c-2}-1\right)\ge\dfrac{\frac{128}{3}t^4-1}{6t^2}\ge\dfrac{125}{18}t^2.
		\end{aligned}
	\end{equation*}
	We further get the desired result.
	If $k\ge 2t+3$, then  by $c\ge6$, we have
	\begin{equation*}
		\begin{aligned}
			\dfrac{f_0(c,k,t)}{g(c,k,t,t+1)}&>\dfrac{3}{4(t+1)}\left(1-\dfrac{1}{2(k-t-1)^4}\right)\ge\dfrac{3}{4(t+1)}\left(1-\dfrac{1}{2(t+2)^4}\right)\ge\dfrac{161}{216(t+1)},\\
			\dfrac{f_0(c,k,t)}{g(c,k,t,t+2)}&>\dfrac{\frac{4}{3}(t+2)^4-1}{(t+1)(t+2)}\ge\dfrac{4}{3}(t+2)^2.
		\end{aligned}
	\end{equation*}
	We also conclude that  $g(c,k,t,t+1)g(c,k,t,t+2)<(f_0(c,k,t))^2$.

	(\ro2) Notice that $c\ge4\log_2t+7$ when $k=t+3$. Then
	$$\dfrac{f_0(c,t+3,t)}{g(c,t+3,t,t+1)}=\dfrac{2}{3(t+1)}\left(1-\dfrac{1}{{2c\c c}}\right)\ge\dfrac{1}{3t}\left(1-\dfrac{1}{128t^4}\right)\ge\dfrac{127}{384t}$$
	and
	$$\dfrac{f_0(c,t+3,t)}{g(c,t+3,t,t+3)}=\dfrac{{2c\c c}-1}{6{t+3\c3}}>\dfrac{2^{c}-1}{24t^3}\ge\dfrac{128t^4-1}{24t^3}\ge\dfrac{127}{24}t$$
	yield the desired result.
\end{proof}

Write
\begin{equation*}
	\begin{aligned}
	h_1(c,k,t)&=\left((t+1)(k-t-1)+\dfrac{3(t+1)}{2(k-t-1)^{c-3}}\right)\left(1+\dfrac{3(t+1)}{2(k-t-1)^{c-3}}\right)(\tt)^2,\\
	h_2(c,k,t)&=\left((k-1)+\dfrac{3(t+1)}{2(k-t-1)^{c-3}}\right)\left(2+\dfrac{3(t+1)}{2(k-t-1)^{c-3}}\right)(\tt)^2,\\
	h_3(c,k,t)&=\left(k-t-1+\dfrac{3(t+1)}{2(k-t-1)^{c-3}}\right)\left(k-t-2+\dfrac{3(t+1)}{2(k-t-1)^{c-3}}\right)(\tt)^2,\\
	h_4(c,k,t)&=\left((t+2)^2-1+\dfrac{12(t+1)(k-t-1)^3}{{(k-t-1)c\c c}}+\dfrac{9(t+1)^2}{4(k-t-1)^{2c-6}}\right)(\tt)^2,
	\end{aligned}
\end{equation*}
	Observe that
\begin{equation}\label{f2}
	\begin{aligned}
		\dfrac{f_2(c,k,t)}{\tt}&=(t+2)-\dfrac{(t+1)(k-t-1)}{{(k-t-1)c\c c}}.
	\end{aligned}
\end{equation}

%$$h_1(c,k,t)=\left((t+1)(k-t-1)+\dfrac{3(t+1)}{2(k-t-1)^{c-3}}\right)\left(1+\dfrac{3(t+1)}{2(k-t-1)^{c-3}}\right)(\tt)^2,$$
%$$h_2(c,k,t)=\left((k-1)+\dfrac{3(t+1)}{2(k-t-1)^{c-3}}\right)\left((2+\dfrac{3(t+1)}{2(k-t-1)^{c-3}}\right)(\tt)^2,$$
%$$h_3(c,k,t)=\left(k-t-1+\dfrac{3(t+1)}{2(k-t-1)^{c-3}}\right)\left(k-t-2+\dfrac{3(t+1)}{2(k-t-1)^{c-3}}\right)(\tt)^2,$$
%$$h_4(c,k,t)=\left((t+2)^2+\dfrac{12(t+1)}{(k-t-1)^{c-3}}+\dfrac{9(t+1)^2}{4(k-t-1)^{2c-6}}\right)(\tt)^2.$$
\begin{lem}\label{=t=t}
		Let $c$, $k$ and $t$ be  positive integers with $c\ge6$ and $k\ge t+3$. If $c\ge4\log_2t+7$ or $k\ge2t+3$, then the following hold.
	\begin{itemize}
		\item[{\rm(\ro1)}] $h_1(c,k,t)<\max\{(f_0(c,k,t))^2,(f_2(c,k,t))^2\}$.
		\item[{\rm(\ro2)}] $h_2(c,k,t)<\max\{(f_0(c,k,t))^2,(f_2(c,k,t))^2\}$.
		\item[{\rm(\ro3)}] $h_3(c,k,t)<(f_0(c,k,t))^2$.
		\item[{\rm(\ro4)}] $h_4(c,k,t)<(f_2(c,k,t))^2$.
	\end{itemize}
\end{lem}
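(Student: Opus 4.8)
The plan is to divide each of the four claimed inequalities by $(\tt)^2$ and reduce to an elementary estimate in a few quantities. Put $m:=k-t-1$ (so $m\ge 2$) and $Q:=\binom{mc}{c}$; the product formula for $\zt$ gives $\ttt/\tt=m/Q$, whence \eqref{f0} and \eqref{f2} yield the closed forms
\[
\frac{f_0(c,k,t)}{\tt}=m-\frac{m^2(m-1)}{2Q},\qquad \frac{f_2(c,k,t)}{\tt}=(t+2)-\frac{(t+1)m}{Q},
\]
while $h_i(c,k,t)/(\tt)^2$ is exactly the bracketed factor appearing in its definition, with $\epsilon:=\frac{3(t+1)}{2m^{c-3}}$. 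So all four parts become polynomial inequalities in $m$, $t$, $\epsilon$ and $1/Q$.

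The two tools doing the work are the bound $Q>m^c$ (valid for $c\ge 3$, $m\ge 2$; hence $m^j/Q<m^{j-c}$, and $\epsilon$ decays super-polynomially in $c$) and the case split forced by the hypothesis: if $m\le t+1$ then $k\le 2t+2$, the disjunct $k\ge 2t+3$ fails, and we may use $c\ge 4\log_2 t+7$, i.e.\ $2^{c}\ge 128\,t^4$; if $m\ge t+2$ we instead use $m\ge t+2$ together with $c\ge 6$. In the first case $\epsilon$, $1/Q$ and every error term is bounded by an explicit fraction with $2^{c}$ (hence $t^4$) in the denominator; in the second by a fixed negative power of $m$, hence of $t+2$.

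Granting this, the four parts are short. For (\ro3) expand $(f_0/\tt)^2-h_3/(\tt)^2$: its leading part is $m^2-m(m-1)=m\ge 2$, and the error (at most $(2m-1)\epsilon+\epsilon^2$ from the $\epsilon$'s and at most $m^3(m-1)/Q$ from squaring $1-\tfrac{m(m-1)}{2Q}$) is checked to be $<2$ in both cases. For (\ro4) the leading part of $(f_2/\tt)^2-h_4/(\tt)^2$ is just $1$, and one must show the negative part $\frac{2(t+2)(t+1)m+12(t+1)m^3}{Q}+\frac{9(t+1)^2}{4m^{2c-6}}$ stays below $1$. For (\ro1) and (\ro2) I would split on the same dichotomy: when $m\ge t+2$, bound $(f_0/\tt)^2\ge m^2-\frac{m^3(m-1)}{Q}$ and compare with $h_1/(\tt)^2=(t+1)m+(\text{lower order})$ and $h_2/(\tt)^2=2(m+t)+(\text{lower order})$, whose leading gaps $m(m-t-1)\ge m$ and $m(m-2)-2t\ge t^2$ absorb the errors; when $m\le t+1$, bound $(f_2/\tt)^2\ge(t+2)^2-(\text{lower order})$ and compare with the leading terms $(t+1)m\le (t+1)^2$ and $2(m+t)\le 4t+2$, with gaps $2t+3$ and $t^2+2$ and now negligible errors.

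The main obstacle is the thin-margin corner cases. The gap in (\ro4) is only $1$, and for $m=2$ (which happens exactly when $k=t+3$, forcing $c\ge 4\log_2 t+7$, so $c\ge 7$) the crude estimate $Q>2^c$ is too weak: for $(m,t)=(2,1)$ it gives only $Q>128$, whereas the hypothesis already yields $Q=\binom{2c}{c}\ge\binom{14}{7}=3432$, which is what is needed. I would therefore replace $Q>m^c$ by the exact value of the relevant central binomial coefficient (or by a Stirling-type bound such as $\binom{2c}{c}>4^{c}/(2c+1)$) in these finitely many small $(m,t)$ configurations, and likewise verify the $h_2$-branch of (\ro2) at $t=1$ directly; away from these the generic estimates of the previous paragraph leave comfortable room.
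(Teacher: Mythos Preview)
Your plan is correct and coincides with the paper's own proof in all essential respects. The paper also normalises by $(\tt)^2$, records the closed forms for $f_0/\tt$ and $f_2/\tt$ exactly as you do, uses the crude bound $\binom{mc}{c}>m^c$ to control the error terms, and splits each of (\ro1)--(\ro4) into the two regimes $k\ge 2t+3$ (comparing against $f_0$) versus $k\le 2t+2$ (comparing against $f_2$). Your diagnosis of the thin margin in (\ro4) at $m=2$ is also exactly what happens in the paper: there the authors retain the exact binomial $\binom{2c}{c}$ in the dominant error term (via the monotonicity $\tfrac{x^3}{\binom{xc}{c}}\le \tfrac{8}{\binom{2c}{c}}$ for $x\ge 2$) rather than relaxing it to $2^c$, and treat $t=1$ separately---precisely the refinement you anticipated. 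The only differences are cosmetic: the paper writes out the explicit numerical fractions case by case, whereas you phrase things in terms of $m$, $Q$, $\epsilon$ and ``leading part plus error''.
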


\begin{proof}
	By \eqref{f0} and \eqref{f2}, we have
	\begin{equation}\label{f0f2-b}
		\begin{aligned}
			\dfrac{f_0(c,k,t)}{\tt}\ge k-t-1-\dfrac{1}{2(k-t-1)^{c-3}},\quad
			\dfrac{f_2(c,k,t)}{\tt}\ge t+2-\dfrac{t+1}{(k-t-1)^{c-1}}.
		\end{aligned}
	\end{equation}

(\ro1) 	By \eqref{f0f2-b}, we have
	\begin{equation*}
		\begin{aligned}
			\dfrac{(f_0(c,k,t))^2-h_1(c,k,t)}{((k-t-1)\tt)^2}&>1-\dfrac{t+1}{k-t-1}-\dfrac{1}{(k-t-1)^4}-\dfrac{3(t+1)(t+2)}{2(k-t-1)^4}-\dfrac{9(t+1)^2}{4(k-t-1)^8}\\
			&>\dfrac{1}{t+2}-\dfrac{1}{(t+2)^4}-\dfrac{3}{2(t+2)^2}-\dfrac{9}{4(t+2)^6}>0
		\end{aligned}
	\end{equation*}	
	for $k\ge 2t+3$,
	and
	\begin{equation*}
		\begin{aligned}
			\dfrac{(f_2(c,k,t))^2-h_1(c,k,t)}{((t+2)\tt)^2}>&1-\dfrac{(t+1)(k-t-1)}{(t+2)^{2}}-\dfrac{2(t+1)}{(t+2)(k-t-1)^{c-1}}\\
			&-\dfrac{3(t+1)}{2(t+2)(k-t-1)^{c-4}}-\dfrac{9(t+1)^2}{4(t+2)^2(k-t-1)^{2c-6}}\\
			\ge&\dfrac{2t+3}{(t+2)^2}-\dfrac{t+1}{32t^4(t+2)}-\dfrac{3(t+1)}{16t^4(t+2)}-\dfrac{9(t+1)^2}{1024t^8(t+2)^2}\\
			\ge&\dfrac{1}{t+2}\left(\dfrac{5}{3}-\dfrac{1}{16t^3}-\dfrac{3}{8t^3}-\dfrac{9}{256t^6}\right)>0	
		\end{aligned}
	\end{equation*}	
	for $k\le2t+2$.  Then the desired result follows.
	
	(\ro2)
	By \eqref{f0f2-b}, we have
	\begin{equation*}
		\begin{aligned}
			\dfrac{(f_0(c,k,t))^2-h_2(c,k,t)}{((k-t-1)\tt)^2}
			>&1-\dfrac{2(k-1)}{(k-t-1)^2}-\dfrac{1}{(k-t-1)^4}-\dfrac{3(t+1)(k+1)}{2(k-t-1)^5}\\
			&-\dfrac{9(t+1)^2}{4(k-t-1)^8}\\
			\ge&1-\dfrac{4(t+1)}{(t+2)^2}-\dfrac{1}{(t+2)^4}-\dfrac{3(t+1)}{(t+2)^4}-\dfrac{9(t+1)^2}{4(t+2)^8}\\
			\ge&1-\dfrac{8}{9}-\dfrac{1}{81}-\dfrac{2}{27}-\dfrac{1}{729}>0
		\end{aligned}
	\end{equation*}
	for $k\ge 2t+3$,
	and
	\begin{equation*}
		\begin{aligned}
			\dfrac{(f_2(c,k,t))^2-h_2(c,k,t)}{((t+2)\tt)^2}>&1-\dfrac{2(k-1)}{(t+2)^2}-\dfrac{2(t+1)}{(t+2)(k-t-1)^{c-1}}-\dfrac{3(t+1)(k+1)}{2(t+2)^2(k-t-1)^{c-3}}\\
			&-\dfrac{9(t+1)^2}{4(t+2)^2(k-t-1)^{2c-6}}
			\\
			>&1-\dfrac{2(2t+1)}{(t+2)^2}-\dfrac{1}{32t^4}-\dfrac{3}{16t^4}-\dfrac{9}{1024t^8}>0
		\end{aligned}
	\end{equation*}
	for $k\le2t+2$. Then the desired result follows.

(\ro3)
	By \eqref{f0f2-b}, we have
	\begin{equation*}
		\begin{aligned}
			\dfrac{(f_0(c,k,t))^2-h_3(c,k,t)}{((k-t-1)\tt)^2}>&1-\dfrac{k-t-2}{k-t-1}-\dfrac{1}{(k-t-1)^{c-2}}-\dfrac{3(t+1)(2k-2t-3)}{2(k-t-1)^{c-1}}\\
			&-\dfrac{9(t+1)^2}{4(k-t-1)^{2c-4}}\ge\dfrac{\varphi(c,k,t)}{k-t-1},
		\end{aligned}
	\end{equation*}
	where $\varphi(c,k,t)=1-\frac{1}{(k-t-1)^{c-3}}-\frac{3(t+1)}{(k-t-1)^{c-3}}-\frac{9(t+1)^2}{4(k-t-1)^{2c-5}}$. Since
	\begin{equation*}
		\begin{aligned}	
			\varphi(c,k,t)\ge1-\dfrac{1}{(t+2)^3}-\dfrac{3(t+1)}{(t+2)^3}-\dfrac{9(t+1)^2}{4(t+2)^7}>0
		\end{aligned}
	\end{equation*}
	for $k\ge2t+3$, and
	\begin{equation*}
		\begin{aligned}	
		\varphi(c,k,t)\ge1-\dfrac{1}{16t^4}-\dfrac{3}{8t^3}-\dfrac{9}{512t^6}>0
		\end{aligned}
	\end{equation*}
	for $k\le2t+2$,
	we have $(f_0(c,k,t))^2-h_3(c,k,t)>0$, as desired.

	(\ro4) Note that
	\begin{equation*}
		\begin{aligned}
			\dfrac{(f_2(c,k,t))^2-h_4(c,k,t)}{(\tt)^2}\ge&1-\dfrac{2(t+1)(t+2)}{(k-t-1)^{c-1}}-\dfrac{12(t+1)(k-t-1)^3}{{(k-t-1)c\c c}}-\dfrac{9(t+1)^2}{4(k-t-1)^{2c-6}}.
		\end{aligned}
	\end{equation*}
	 Suppose $k\ge2t+3$. Then	\begin{equation*}
		\begin{aligned}
			\dfrac{(f_2(c,k,t))^2-h_4(c,k,t)}{(\tt)^2}\ge&1-\dfrac{2(t+1)}{(t+2)^{4}}-\dfrac{12(t+1)}{(t+2)^{3}}-\dfrac{9(t+1)^2}{4(t+2)^{6}}\ge\dfrac{4}{81}>0.
		\end{aligned}
	\end{equation*}
	
	Now suppose $k\le2t+2$. 	For an integer $x\ge2$, we have
	$$\dfrac{x^3}{{xc\c c}}\cdot\dfrac{{(x+1)c\c c}}{(x+1)^3}=\left(\dfrac{x}{x+1}\right)^3\prod_{i=1}^c\dfrac{xc+i}{(x-1)c+i}\ge\left(\dfrac{x}{x+1}\right)^3\left(\dfrac{x+1}{x}\right)^c>1,$$
	implying that $\frac{x^3}{{xc\c c}}\le\frac{8}{{2c\c c}}$.
	If $t=1$, then
		\begin{equation*}
		\begin{aligned}
			\dfrac{(f_2(c,k,t))^2-h_4(c,k,t)}{(\tt)^2}>&1-\dfrac{3}{16}-\dfrac{192}{{2c\c c}}-\dfrac{9}{256}>0.
		\end{aligned}
	\end{equation*}
	If $t\ge2$, then
		\begin{equation*}
		\begin{aligned}
			\dfrac{(f_2(c,k,t))^2-h_4(c,k,t)}{(\tt)^2}>&1-\dfrac{3}{16t^2}-\dfrac{3}{2t^3}-\dfrac{9}{256t^6}>0.
		\end{aligned}
	\end{equation*}
	Then the desired result follows.
\end{proof}

\begin{lem}\label{f0f1}
	Let $c$, $k$ and $t$ be  positive integers with $c\ge6$ and $k\ge t+3$. If $c\ge4\log_2t+7$ or $k\ge2t+3$, then the following hold.
	\begin{itemize}
		\item[{\rm(\ro1)}] $f_1(c,k,t)>f_0(c,k,t)$.
			\item[{\rm(\ro2)}] $k\ge2t+4$ and $f_1(c,k,t)>f_2(c,k,t)$.
				\item[{\rm(\ro3)}] $k\le2t+3$ and $f_1(c,k,t)\le f_2(c,k,t)$. Equality holds if and only if $k=t+3$ or $(k,t)=(5,1)$.
	\end{itemize}
\end{lem}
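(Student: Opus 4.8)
The plan is to start from an explicit formula for $f_1(c,k,t)$, parallel to \eqref{f0} and \eqref{f2}. Set $\theta_j:=\theta(c,k,t+j)$. Since $\theta(c,k,z)$ also equals the number of $c$-uniform partitions of a set of $(k-z)c$ elements, one has $\theta_{k-t}=\theta_{k-t-1}=1$, $\theta_{k-t-2}=\tfrac12\binom{2c}{c}$, and $\theta_{j+1}/\theta_j=(k-t-j)/\binom{(k-t-j)c}{c}$, which is $<1$ because $\binom{(k-t-j)c}{c}\ge(k-t-j)^c$; thus $(\theta_j)_j$ is strictly and rapidly decreasing. Splitting $\mn_1(T,L,M)$ into the disjoint pieces $\{F:T\subseteq F,\ |F\cap L|\ge t+1\}$ and $\{F:T\not\subseteq F,\ |F\cap M|=k-2\}$, I would evaluate the first by inclusion--exclusion over the $k-1-t$ blocks of $L\bs T$, and the second by noting that the unique block of $M$ missed by $F$ must lie in $T$ while the remaining $2c$ points split in $\tfrac12\binom{2c}{c}-1$ admissible ways; this yields
$$f_1(c,k,t)=\sum_{j=1}^{k-1-t}(-1)^{j+1}\binom{k-1-t}{j}\theta_j+t\left(\tfrac12\binom{2c}{c}-1\right).$$

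For {\rm(\ro1)}: the inequality $\binom{k-1-t}{j+1}\theta_{j+1}<\binom{k-1-t}{j}\theta_j$ for every $j\ge1$ (again from $\binom{(k-t-j)c}{c}\ge(k-t-j)^3$ when $c\ge3$) shows that the tail $\sum_{j\ge3}(-1)^{j+1}\binom{k-1-t}{j}\theta_j$ is nonnegative, so the first summand of $f_1$ is at least $f_0(c,k,t)$; and $\binom{2c}{c}\ge20$ makes the remaining term strictly positive, whence $f_1>f_0$. For {\rm(\ro2)} and {\rm(\ro3)} I would work from
$$f_1-f_2=(k-2t-3)\theta_1+\left[(t+1)-\binom{k-1-t}{2}\right]\theta_2+\sum_{j=3}^{k-1-t}(-1)^{j+1}\binom{k-1-t}{j}\theta_j+t\left(\tfrac12\binom{2c}{c}-1\right).$$
When $k\ge2t+4$, the coefficient $k-2t-3$ is $\ge1$, the alternating tail is $\ge0$, and the last term is $>0$, so {\rm(\ro2)} reduces to $\theta_1>\binom{k-1-t}{2}\theta_2$, which follows from $\theta_2/\theta_1\le(k-t-1)^{-2}$.

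For {\rm(\ro3)} ($t+3\le k\le2t+3$): first I would dispose of the two equality cases by direct substitution into the formulas above --- when $k=t+3$ both sides equal $\tfrac{t+2}{2}\binom{2c}{c}-(t+1)$, and when $(k,t)=(5,1)$ both equal $\tfrac12\binom{3c}{c}\binom{2c}{c}-\binom{2c}{c}$. In the remaining range $t+4\le k\le2t+3$ with $(k,t)\neq(5,1)$ (so necessarily $t\ge2$) I would split once more. If $k=2t+3$, the $\theta_1$-terms cancel and $f_1-f_2$ is dominated by $-\tfrac{t(t+1)}{2}\theta_2$, with $\theta_2$ far exceeding all positive contributions (here $\theta_2/\theta_3=\binom{(t+1)c}{c}/(t+1)$ is enormous). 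If $t+4\le k\le2t+2$, then $k-2t-3\le-1$ gives $(k-2t-3)\theta_1\le-\theta_1$, and I would bound every positive contribution --- each of order $\theta_2$, together with $t(\tfrac12\binom{2c}{c}-1)=t\theta_{k-t-2}-t$ --- against $\theta_1$, invoking the hypothesis $c\ge4\log_2t+7$ so that $\binom{3c}{c}$ beats $t$.

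The hard part will be {\rm(\ro3)}. Near $k=2t+3$ the $\theta_1$-coefficients $k-1-t$ (in $f_1$) and $t+2$ (in $f_2$) almost, or exactly, cancel, so the sign of $f_1-f_2$ is decided by the lower-order $\theta_j$'s and by the ``Part B'' term $t(\tfrac12\binom{2c}{c}-1)$; isolating precisely the equality cases $k=t+3$ and $(k,t)=(5,1)$, and securing strict inequality everywhere else in the range, demands careful bookkeeping, and in the window $t+4\le k\le2t+2$ --- where $k-t$ is small while $t$ may be large --- the assumption $c\ge4\log_2t+7$ is genuinely needed to keep $t\binom{2c}{c}$ below $\theta_1$.
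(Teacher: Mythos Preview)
Your proposal is correct and follows the same overall strategy as the paper --- obtain an explicit expression for $f_1$, compare the $\theta_1$ and $\theta_2$ coefficients against those of $f_0$ and $f_2$, and kill the lower-order terms using the rapid decay of $\theta_j$. The organization differs in two small ways worth noting. First, you write $f_1$ via straight inclusion--exclusion over the blocks of $L\setminus T$, whereas the paper sets up the double-counting families $\ml_j(T,M)$ and $\ma_j(T,M)$ and uses the identity $|\ml_j|=\sum_{i\ge j}\binom{i-t}{j-t}|\ma_i|$; these are of course the same computation in different clothing, and the paper's version has the pleasant feature that $f_0=|\ml_{t+1}|-|\ml_{t+2}|=\sum_i\frac{3i-i^2}{2}|\ma_{t+i}|\le|\ma_{t+1}|+|\ma_{t+2}|$ gives {\rm(\ro1)} immediately from a containment of sets. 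Second, for {\rm(\ro2)} the paper does not estimate $f_1-f_2$ directly but instead shows $f_0>f_2$ from the closed forms \eqref{f0} and \eqref{f2} and then invokes {\rm(\ro1)}; this is slightly cleaner than your route because $f_0$ already truncates the alternating tail, but your direct bound via $\theta_2/\theta_1<(k-t-1)^{-2}$ works just as well. The treatment of {\rm(\ro3)} --- equality by direct substitution at $k=t+3$ and $(k,t)=(5,1)$, then splitting $k=2t+3$ versus $t+4\le k\le 2t+2$ and invoking $c\ge4\log_2t+7$ in the latter --- is exactly what the paper does.
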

\begin{proof}

	Suppose $T\in U^{[ck]}_{c,t}$ and $M\in U^{[ck]}_{c,k-1}$ with $T\subseteq M$. For $j\in\{t,t+1,\dots,k-1\}$, let
	$$\ml_j(T,M)=\left\{(I,F)\in U^{[ck]}_{c,j}\times\uu: T\subseteq I\subseteq M,\ I\subseteq F\right\},$$
	$$\ma_j(T,M)=\{F\in\uu:T\subseteq F,\ |M\cap F|=j\}.$$
	
	(\ro1)  For $j\in\{t,t+1,\dots,k-1\}$, we have
	\begin{equation*}
		{k-t-1\c j-t}\cdot\left(\dfrac{1}{(k-j)!}\prod_{i=j}^{k-1}{(k-i)c\c c}\right)=|\ml_j(T,M)|=\sum_{i=j}^{k-1}{i-t\c j-t}|\ma_i(T,M)|.
		\end{equation*}
	 Observe that
		\begin{equation}\label{dc}
			\begin{aligned}
			f_0(c,k,t)&=|\ml_{t+1}(T,M)|-|\ml_{t+2}(T,M)|=\sum_{i=1}^{k-t-1}\dfrac{3i-i^2}{2}|\ma_{t+i}(T,M)|\\
			&\le|\ma_{t+1}(T,M)|+|\ma_{t+2}(T,M)|.
			\end{aligned}
			\end{equation}
	This together with
	$$\ma_{t+1}(T,M)\sqcup\ma_{t+2}(T,M)\subseteq\{F\in\uu: T\subseteq F,\ |F\cap M|\ge t+1\}\subsetneq\mn_1(T,M,M)$$
	yields the desired result.

	(\ro2)
	Suppose $k\ge2t+4$. By \eqref{f0} and \eqref{f2}, we have
	\begin{equation*}
		\begin{aligned}
			\dfrac{f_0(c,k,t)-f_2(c,k,t)}{\tt}
			&=(k-2t-3)-\dfrac{(k-t-1)}{{(k-t-1)c\c c}}{k-t-1\c2}-\dfrac{(t+1)(k-t-1)}{{(k-t-1)c\c c}}\\
			&>1-\dfrac{1}{2(k-t-1)^{c-3}}-\dfrac{t+1}{(k-t-1)^{c-1}}\\
			&\ge1-\dfrac{1}{2(t+3)^3}-\dfrac{t+1}{(t+3)^5}>0.
		\end{aligned}
	\end{equation*}

	(\ro3) If $k=t+3$, then $\mn_1(T,M,M)=\mn_2(M)$, implying that $f_1(c,k,t)=f_2(c,k,t)$. In the following, assume that $k\ge t+4$. Observe that
	$$\left|\left\{F\in\uu: T\not\subseteq F,
	\ |F\cap M|=k-2\right\}\right|=t\left(\theta(c,k,k-2)-1\right).$$
	
	We first consider the case $k\le2t+2$. Then $t\ge2$.
	It is routine to check that
	\begin{equation}\label{f1u}
		\begin{aligned}
			f_1(c,k,t)\le\left(k-t-1\right)\tt+t(\theta(c,k,k-2)-1).
		\end{aligned}
	\end{equation}
	Then from $k\le2t+2$, \eqref{f2} and \eqref{f1u}, we have
	\begin{equation*}\label{f2-f1}
		\begin{aligned}
			\dfrac{f_2(c,k,t)-f_1(c,k,t)}{\ttt}
			&>\dfrac{1}{k-t-1}{(k-t-1)c\c c}-(t+1)-\dfrac{t\theta(c,k,k-2)}{\ttt}\\
			&>64t^4-2t-t>0,
		\end{aligned}
	\end{equation*}
	as desired.

In the following, assume that $k=2t+3$.
By \eqref{f0}, \eqref{f2} and \eqref{dc}, we have
\begin{equation*}
	\begin{aligned}
		f_1(c,k,t)-f_0(c,k,t)&=\sum_{i=3}^{t+2}{i-1\c2}|\ma_{t+i}(T,M)|+t(\theta(c,2t+3,2t+1)-1),\\
		f_2(c,k,t)-f_0(c,k,t)&={t+1\c2}\theta(c,2t+3,t+2).
\end{aligned}
\end{equation*}
Suppose $t=1$. We have $k=5$ and
\begin{equation*}
	\begin{aligned}
		f_1(c,k,t)-f_0(c,k,t)&=|\ma_4(T,M)|+(\theta(c,5,3)-1)=\theta(c,5,3)=f_2(c,k,t)-f_0(c,k,t).
	\end{aligned}
\end{equation*}
Then $f_1(c,k,t)=f_2(c,k,t)$. Next assume that $t\ge2$.

For $i\in\{3,4,\dots,t+2\}$, write
$$\lambda(i)={i-1\c2}{t+2\c i}\theta(c,2t+3,t+i).$$
When $3\le i\le t+1$, we have
\begin{equation*}
	\begin{aligned}
		\dfrac{\lambda(i+1)}{\lambda(i)}&=\dfrac{i}{i-2}\cdot\dfrac{t+2-i}{i+1}\cdot\dfrac{t+3-i}{{(t+3-i)c\c c}}<\dfrac{i}{(i-2)(i+1)}\cdot\dfrac{1}{(t+3-i)^{c-2}}<\dfrac{3}{4}.
	\end{aligned}
\end{equation*}
Therefore
\begin{equation*}
	\begin{aligned}
		f_1(c,k,t)-f_0(c,k,t)&<\lambda(3)\sum_{i=0}^\infty\left(\dfrac{3}{4}\right)^i+t(\theta(c,2t+3,2t+1)-1)\\
		&=4{t+2\c3}\theta(c,2t+3,t+3)+t(\theta(c,2t+3,2t+1)-1).
	\end{aligned}
\end{equation*}
We further get
\begin{equation*}
	\begin{aligned}
		\dfrac{f_2(c,k,t)-f_1(c,k,t)}{\theta(c,2t+3,t+3)}&>\dfrac{1}{t+2}{t+1\c2}{({t+1})c\c c}-4{t+2\c3}-\dfrac{t\theta(c,2t+3,2t+1)}{\theta(c,2t+3,t+3)}\\
		&>\dfrac{3}{4}(t+1)^6-2t^3-t>0.
	\end{aligned}
\end{equation*}
This finishes our proof.
\end{proof}

\medskip
\noindent{\bf Acknowledgment}	
\medskip

T. Yao is supported by Natural Science Foundation of Henan (252300420899). 
M. Cao is supported by the National Natural Science
Foundation of China (12301431).


\begin{thebibliography}{99}
		\bibitem{Ahlswede-Khachatrian-1996}
		R. Ahlswede and L.H. Khachatrian, The complete nontrivial-intersection theorem for systems of finite sets, J. Combin. Theory Ser. A 76 (1996) 121--138.
		
		\bibitem{CEKR} R. Ahlswede and L.H. Khachatrian, The complete intersection theorem for systems of finite sets, European J. Combin. 18 (1997) 125--136.
		
		\bibitem{B-2016} P. Borg, The maximum product of weights of cross-intersecting families, J. London Math. Soc. 94 (2016) 993--1018.
		
		\bibitem{Cao-set} M. Cao, B. Lv and K. Wang, The structure of large non-trivial $t$-intersecting families of finite sets. European J. Combin. 97 (2021) 103373.
		
		\bibitem{C-L-L-W-2024} M. Cao, M. Lu, B. Lv and K. Wang, Nearly extremal non-trivial cross $t$-intersecting
		families and $r$-wise $t$-intersecting families, European J. Combin. 120 (2024) 103958.
		
		\bibitem{JAMS} D. Ellis, E. Friedgut and H. Pilpel, Intersecting families of permutations, J. Amer. Math. Soc. 24 (2011) 649--682.
		
		 \bibitem{EKR1} P. Erd\H{o}s, C. Ko and R. Rado, Intersection theorems for systems of finite sets, Quart. J. Math. Oxford Ser. (2) 12 (1961) 313--320.
		
		\bibitem{ES}P.L. Erd\H{o}s and L.A. Sz\'{e}kely. Erd\H{o}s-Ko-Rado theorems of higher order. Numbers,
		information and complexity (Bielefeld, 1998), 117--124, 2000.
		
		
		
		\bibitem{Fallat} S. Fallat, K. Meagher and M.N. Shirazi, The Erd\H{o}s-Ko-Rado theorem for $2$-intersecting families of perfect matchings, Algebr. Comb. 4 (2021) 575--598.
		
		\bibitem{EKRF}	P. Frankl, The Erd\H{o}s-Ko-Rado theorem is true for $n = ckt$, in: I. Combinatorics (Ed.), Proc. Fifth Hungarian Colloq., Keszthey 1976, in: Colloq. Math. Soc. J\'anos Bolyai, vol. 18, North-Holland, 1978, pp. 365--375. 
		
		\bibitem{HM2} P. Frankl, On intersecting families of finite sets, J. Combin. Theory Ser. A 24 (1978)
		146--161.
		
	
		
		\bibitem{Frankl--Furedi-1991}
		P. Frankl and Z. F\"{u}redi, Beyond the Erd\H{o}s-Ko-Rado theorem, J. Combin. Theory Ser. A 56 (1991) 182--194.
		
		\bibitem{F-T-1992} P. Frankl, N. Tokushige, Some best possible inequalities concerning cross-intersecting families, J. Combin. Theory Ser. A 61 (1992) 87--97.
		
		\bibitem{F-W-2023} P. Frankl and J. Wang, A product version of the Hilton-Milner theorem, J. Combin. 
		Theory Ser. A 200 (2023) 105791.
		
		\bibitem{F-W-2024} P. Frankl and J. Wang, A product version of the Hilton-Milner-Frankl theorem, Sci. 
		China Math. 67 (2024) 455--474.
		
		\bibitem{GM} C. Godsil and K. Meagher, An algebraic proof of the Erd\H{o}s-Ko-Rado theorem for intersecting families of perfect matchings, Ars Math. Contemp. 12 (2017) 205--217.
		
		\bibitem{Han-Kohayakawa}
		J. Han and Y. Kohayakawa, The maximum size of a non-trivial intersecting uniform family that is not a subfamily of the Hilton-Milner family, Proc. Amer. Math. Soc. 145 (2017) 73--87.
		
		\bibitem{H-L-W-Z-2026} D. He, A. Li, B. Wu and H. Zhang, On nontrivial cross-$t$-intersecting families, J. Combin. Theory Ser. A 217 (2026) 106095.
		
		
		 \bibitem{HM1}	A. Hilton and E. Milner, Some intersection theorems for systems of finite sets, Quart.	J. Math. Oxford Ser. (2) 18 (1967) 369--384.
		
		\bibitem{H-P-2025} Y. Huang and Y. Peng, Non-empty pairwise cross-intersecting families, J. Combin. 
		Theory Ser. A 211 (2025) 105981.
		
		
		\bibitem{STAB} N. Jacobson, Basic Algebra. I, second ed., W.H. Freeman and Company, New York, 1985.
		
		\bibitem{Kostochka-Mubayi}
		A. Kostochka and D. Mubayi, The structure of large intersecting families, Proc. Amer. Math. Soc. 145 (2017) 2311--2321.
		
		\bibitem{KR} C. Ku and D. Renshaw, Erd\H{o}s-Ko-Rado theorems for permutations and set partitions, J. Combin. Theory Ser. A 115 (2008) 1008--1020.
		\bibitem{KWEJC} C. Ku and K. Wong, On cross-intersecting families of set partitions, Electron J. Combin. 19 (2012) \#P49.
		\bibitem{KWJCTA} C. Ku and K. Wong, An analogue of the Hilton-Milner theorem for set partitions, J. Combin. Theory Ser. A 120 (2013) 1508--1520.
		

		\bibitem{L-Z-2025} A. Li and H. Zhang, On non-empty cross-$t$-intersecting families, J. Combin. Theory Ser. A 210 (2025) 105960.
		
		\bibitem{SN} M.W. Liebeck, C.E. Praeger and J. Saxl, A classification of the maximal subgroups of the finite alternating and symmetric groups, J. Algebra 111 (1987) 365--383.
		
		
		\bibitem{Lindzey} N. Lindzey, Erd\H{o}s-Ko-Rado for perfect matchings, European J. Combin. 65 (2017) 130--142.
		
		\bibitem{MM2005} K. Meagher and L. Moura, Erd\H{o}s-Ko-Rado theorems for uniform set-partition systems, Electron J. Combin. 12 (2005) \#R40.
		
		\bibitem{S-S-1980} M. Simonovits and V. S\'{o}s. Intersection theorems on structures, Ann. Discrete
		Math. 6 (1980) 301--313.
		
		
		
		\bibitem{T-2013} N. Tokushige, The eigenvalue method for cross $t$-intersecting families, J. Algebraic Combin. 38 (2013) 653--662
		
		\bibitem{WZ} J. Wang and H. Zhang, Nontrivial independent sets of bipartite graphs and cross-intersecting families, J. Combin. Theory Ser. A 120 (2013) 129--141.
		
		
		\bibitem{EKR2} R.M. Wilson, The exact bound in the Erd\H{o}s-Ko-Rado theorem, Combinatorica 4 (1984) 247--257.
		
		\bibitem{Z-W-2025} H. Zhang and B. Wu, On a conjecture of Tokushige for cross-$t$-intersecting families, J. Combin. Theory Ser. B 171 (2025) 49--70.
		
		\bibitem{Z-F-2024} M. Zhang and T. Feng, A note on non-empty cross-intersecting families, European J. Combin. 120 (2024) 103968.
	
	\end{thebibliography}
\end{document}